\theoremstyle{plain}
\newtheorem{PropIntro}{Proposition}
\newtheorem{ThmIntro}[PropIntro]{Theorem}
\newtheorem{RemIntro}[PropIntro]{Remark}
\newtheorem{Prop}{Proposition}[section]
\newtheorem{Thm}[Prop]{Theorem}
\newtheorem{Lem}[Prop]{Lemma}
\newtheorem{Rem}[Prop]{Remark}
\newcommand{\IN}{\mathbb{N}}
\newcommand{\IR}{\mathbb{R}}
\newcommand{\C}{\mathcal{C}}
\newcommand{\N}{{{\mathcal N}}}
\newcommand{\E}{{{\mathcal E}}}
\newcommand{\abs}[1]{\mathopen\vert#1\mathclose\vert}
\newcommand{\norm}[1]{\mathopen\Vert#1\mathclose\Vert}
\newcommand{\intd}{\,{\mathrm d}}
\renewcommand{\phi}{\varphi}
\renewcommand{\epsilon}{\varepsilon}
\renewcommand{\ge}{\geqslant}
\renewcommand{\subset}{\subseteq}
\newcommand{\rr}{\mathbb{R}}
\definecolor{umhblue}{rgb}{.69,.75,.86}
\definecolor{umhdarkblue}{rgb}{.15,.15,.53}
\definecolor{umhred}{rgb}{0.4,.0,.0}
\definecolor{mygreen}{RGB}{0, 165, 0}
\definecolor{definition}{rgb}{0.45, 0.61, 0.96}
\definecolor{problem}{rgb}{0.84,0.5,0}
\begin{document}

\title[Estimates and asymptotic behavior for Lane Emden problem]{Lane Emden
problems: asymptotic behavior of low energy nodal solutions}
\author[M.~Grossi, C.~Grumiau, F.~Pacella]{Massimo Grossi, Christopher
  Grumiau, Filomena Pacella}

\address{
Dipartimento di Matematica\\
   Universita' di Roma  ``Sapienza''\\
  P.~le A.~Moro 2, 00185 Roma, Italy}
\email{grossi@mat.uniroma.it (M.~Grossi)}
\email{pacella@mat.uniroma.it (F.~Pacella)}

\address{
  Institut de Math{\'e}matique\\
  Universit{\'e} de Mons\\
  20, Place du Parc, B-7000 Mons, Belgium}
\email{Christopher.Grumiau@umons.ac.be (C.~Grumiau)}

\begin{abstract}
We study the  nodal solutions of the Lane Emden Dirichlet problem
\begin{equation*}
\left\{
\begin{aligned}
-\Delta u&= \abs{ u}^{p-1}u,&&\text{ in } \Omega, \\
u&=0,&&\text{ on } \partial \Omega,
\end{aligned}
\right.
\end{equation*}
where $\Omega$ is a smooth bounded domain in  $\IR^2$ and $p>1$.
We consider solutions $u_p$ satisfying $$p \int_{\Omega}
\abs{\nabla
  u_p}^2\to 16\pi e\quad\hbox{ as }p\rightarrow+\infty\qquad (*)$$
and we are interested in the shape and the
asymptotic behavior as $p\rightarrow+\infty$.

First we prove that (*) holds for least energy nodal solutions.
Then we obtain some estimates and the asymptotic profile of this
kind of solutions. Finally, in some cases, we prove that $pu_p$
can be characterized as the difference of two Green's functions
and the nodal line intersects the boundary of $\Omega$, for large
$p$.
\end{abstract}

\keywords{superlinear elliptic boundary value problem,  least energy
  nodal solution, asymptotic behavior, variational methods}

\subjclass[2000]{Primary: 35J91; Secondary: 35B32}

\thanks{This work has been done while the second author was visiting
  the Mathematics Department of the University of Roma ``
  Sapienza'' supported by INDAM-GNAMPA.  He also acknowledges the national bank of Belgium and the program
``Qualitative study of
 solutions of variational elliptic partial differerential equations. Symmetries,
bifurcations, singularities, multiplicity and numerics'' of the FNRS, project
2.4.550.10.F of the Fonds de la Recherche Fondamentale Collective for
the partial support.}

\maketitle

\section{Introduction} \label{Section-Intro}

We consider the superlinear elliptic boundary value problem
\begin{equation*}
\tag{\protect{$\mathcal{P}_p$}}
\label{pblP}
\left\{
\begin{aligned}
-\Delta u&= \abs{ u}^{p-1}u,&&\text{ in }\Omega, \\
u&=0,&&\text{ on } \partial\Omega,
\end{aligned}
\right.
\end{equation*}
where $\Omega$ is a smooth bounded domain in $\IR^{2}$ and
$p>1$.\\
By standard variational methods we know that problem~\eqref{pblP}
has a positive ground state solution. Moreover many other results
about the multiplicity and the qualitative properties of positive
solutions in various types of domains have been obtained in the
last decades.

In this paper we are interested in studying sign changing
solutions of~\eqref{pblP}. In contrast with the case of positive
solutions not much is known on nodal solutions of~\eqref{pblP}, in
particular about their qualitative behavior. Let us therefore
recall some recent results. In the paper~\cite{ccn} A.~Castro,
J.~Cossio and J.~M.~Neuberger proved the existence of a nodal
solution with least energy among nodal solutions, which is
therefore referred to as the \emph{least energy nodal solution} of
Problem \eqref{pblP}. T.~Bartsch and T.~Weth showed that these
solutions possess exactly two nodal regions and have Morse index
two (see \cite{bartweth}). Since positive ground state solutions
have the symmetries of the domain $\Omega$, if $\Omega$ is convex,
by the classical result of \cite{gnn} , a natural question is
whether least energy nodal solutions also inherit the symmetries
of the domain $\Omega$. In \cite{aftalion} A.~Aftalion and
F.~Pacella proved that, in a ball or in a annulus, a least energy
nodal solution cannot be radial. In fact, in dimension $N$, they
cannot be even with respect to more than $N-1$ orthogonal
directions. They also proved that the nodal set touches the
boundary. On the other hand, T.~Bartsch, T.~Weth and M.~Willem in
\cite{wil1} and F.~Pacella and T.~Weth in \cite{pw}, with
different methods, obtained partial symmetry results: they showed
that on a radial domain, a least energy nodal solution $u$ has the
so-called foliated Schwarz symmetry, i.e.\ $u$ can be written as
$u(x)=\Tilde{u}(\abs{x},\xi \cdot x)$, where $\xi \in \IR^N$ and
$\Tilde{u}(r,\cdot)$ is nondecreasing for every $r>0$. In fact, as
they are not radial, $\Tilde{u}(r,\cdot)$ is increasing. In
dimension $N$, it implies that the least energy nodal solutions
are even with respect to $N-1$ orthogonal directions. Concerning
the ``last direction'', in \cite{bbgv,gt}, D.~Bonheure,
V.~Bouchez, C.~Grumiau, C.~Troestler and J.~Van~Schaftingen proved
that for $p$ close to $1$ the least energy nodal solution must be
odd with respect to this direction. Moreover, it is unique up to a
rotation. For general open bounded domains, they prove that least
energy nodal solutions must respect the symmetries of their
orthogonal projection on the second eigenspace of $-\Delta$ when
$p$ is close to $1$.

In this paper we study the profile and other qualitative
properties of low energy nodal solutions of problem~\eqref{pblP}
as $p\to +\infty$ and $\Omega\subset \IR^2$ is any bounded smooth
domain. For ground state positive solutions the same analysis has
been done by  X.~Ren and J.~Wei in~\cite{ren} and ~\cite{rw},
obtaining, in particular, $L^\infty$ estimates. This result has
been improved by Adimurthi and M. Grossi in~\cite{grossi} (see
also ~\cite{elgrossi}) who computed the exact value of the
$L^\infty$-norm at the limit, by a different approach.

Here by low energy we mean that we are interested in the families
of nodal solutions $(u_p)_{p>1}$ satisfying
\begin{equation*}
\tag{A}
\label{assum}
p\int_{\Omega}\abs{\nabla
  u_p}^2 \to 16\pi e\quad\hbox{ as }p\rightarrow+\infty.
\end{equation*}
Note that as a consequence
of (\cite{ren}) and as it will be clear later, this kind of solutions
cannot have more than $2$ nodal regions for $p$ large.\\
Let us observe that there are nodal solutions of ~\eqref{pblP}
satisfying ~\eqref{assum}. In fact least energy nodal
solutions are among those and we have:

\begin{ThmIntro}
\label{intro1}
The condition~\eqref{assum}  holds for any family of least energy nodal
solutions.
\end{ThmIntro}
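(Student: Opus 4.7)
The strategy is to sandwich $p\int_\Omega |\nabla u_p|^2$ between two expressions that both converge to $16\pi e$, using the Ren--Wei asymptotic $p\int_\Omega|\nabla w_p|^2\to 8\pi e$ for positive ground states. The starting point is to exploit the variational characterization of the least energy nodal solution via the ``double'' Nehari set $\mathcal{M}=\{v\in H^1_0(\Omega)\setminus\{0\}:v^+,v^-\in \mathcal{N}_p(\Omega)\}$, where $\mathcal{N}_p(\Omega)=\{v\in H^1_0(\Omega)\setminus\{0\}:\int|\nabla v|^2=\int|v|^{p+1}\}$: by Castro--Cossio--Neuberger one has $J_p(u_p)=\inf_{\mathcal M}J_p$. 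Since $u_p$ solves \eqref{pblP}, testing the equation against $u_p^\pm\in H^1_0(\Omega)$ yields $\int|\nabla u_p^\pm|^2=\int(u_p^\pm)^{p+1}$, hence $u_p^\pm\in \mathcal N_p(\Omega)$, and by disjoint supports
\[
\int_\Omega|\nabla u_p|^2=\int_\Omega|\nabla u_p^+|^2+\int_\Omega|\nabla u_p^-|^2,\qquad J_p(u_p)=J_p(u_p^+)+J_p(u_p^-).
\]

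\textbf{Lower bound.} Since both $u_p^+$ and $u_p^-$ lie in the Nehari manifold of $\Omega$, each satisfies $J_p(u_p^\pm)\geq c_p(\Omega):=\inf_{\mathcal N_p(\Omega)}J_p$, the ground state level. On the Nehari manifold $J_p=\frac{p-1}{2(p+1)}\int|\nabla\cdot|^2$, so the Ren--Wei result \cite{ren,rw} applied on $\Omega$ gives $p\cdot\tfrac{2(p+1)}{p-1}\,c_p(\Omega)\to 8\pi e$. Therefore $p\int|\nabla u_p^\pm|^2\geq 8\pi e+o(1)$, and adding the two contributions yields
\[
p\int_\Omega|\nabla u_p|^2\geq 16\pi e+o(1).
\]

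\textbf{Upper bound.} Pick two disjoint closed balls $\overline{B_1},\overline{B_2}\subset\Omega$, and let $w_{p,i}$ be a positive ground state of \eqref{pblP} posed on $B_i$, extended by zero to $\Omega$. The candidate $v_p:=w_{p,1}-w_{p,2}$ belongs to $\mathcal M$ (disjoint supports make $v_p^+=w_{p,1}$ and $v_p^-=w_{p,2}$, each on the Nehari manifold of $\Omega$); hence by the variational characterization
\[
J_p(u_p)\leq J_p(v_p)=J_p(w_{p,1})+J_p(w_{p,2})=c_p(B_1)+c_p(B_2).
\]
Applying Ren--Wei on each ball $B_i$ (their result holds on any smooth bounded planar domain), $p\int_{B_i}|\nabla w_{p,i}|^2\to 8\pi e$, which translates into $p\cdot\tfrac{2(p+1)}{p-1}c_p(B_i)\to 8\pi e$. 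Consequently $p\int_\Omega|\nabla u_p|^2=p\cdot\tfrac{2(p+1)}{p-1}J_p(u_p)\leq 16\pi e+o(1)$, and combining with the lower bound completes the proof.

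\textbf{Where the difficulty sits.} The argument is essentially a bookkeeping exercise once Ren--Wei is available: the only delicate point is the simultaneous use of their asymptotic on two different domains (the ambient $\Omega$, for the lower bound, and two disjoint inner balls, for the upper bound). No uniformity in a varying family of subdomains is needed because we do not apply Ren--Wei to the (unknown) nodal domains of $u_p$ themselves; we only exploit that $u_p^\pm$ extended by zero live in the global Nehari manifold $\mathcal N_p(\Omega)$, which circumvents the potentially hard issue of controlling the geometry of the nodal set as $p\to +\infty$.
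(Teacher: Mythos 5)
Your proof is correct, and it reaches the two bounds $16\pi e\pm o(1)$ by the same sandwich structure as the paper (an upper bound from a two-bump test function in the nodal Nehari set $\mathcal{M}_p$, a lower bound from the Nehari constraint on each of $u_p^{\pm}$ separately), but the execution is genuinely different. For the upper bound the paper builds the competitor by hand: a cut-off Liouville profile $\phi(x)\sqrt{e}\bigl(1+z((x-a)/\varepsilon_p)/p\bigr)$ and its odd reflection in a second ball, followed by a page of integral estimates and a projection onto $\mathcal{M}_p$ via factors $\alpha_p^{\pm}\to 1$; you instead take actual positive ground states of \eqref{pblP} on two disjoint balls $B_1,B_2$, observe that their difference already lies exactly in $\mathcal{M}_p$ (no projection, no cut-off errors), and quote the Ren--Wei asymptotic $p\,c_p(B_i)\to 4\pi e$. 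For the lower bound the paper applies the inequality $\norm{u}_{t}\le D_t t^{1/2}\norm{\nabla u}_2$, $D_t\to(8\pi e)^{-1/2}$, directly to $u_p^{\pm}\in\mathcal{N}_p$; you compare $\E_p(u_p^{\pm})$ with the ground-state level $c_p(\Omega)$ and again quote Ren--Wei, which is the same estimate with one extra layer of indirection. What your route buys is brevity and the elimination of all explicit computation; what it costs is a heavier reliance on the literature: you need the \emph{sharp} two-sided asymptotic $p\,c_p(D)\to 4\pi e$ for the ground-state level on a smooth bounded planar domain, and in particular its upper-bound half on the balls $B_i$ --- which is precisely the test-function computation the paper chooses to carry out explicitly via $\Bar{W}_p$. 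That asymptotic is indeed available (it is the content of \cite{ren,rw}, refined in \cite{grossi}), so there is no gap, but if a reader only grants you the embedding constant $D_t\to(8\pi e)^{-1/2}$ from \cite{ren}, you would still have to supply the bubble computation for the upper bound, and your proof would collapse back onto the paper's.
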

To describe our results we need some notations. In
$H^{1}_{0}(\Omega)$, we use the scalar product
$(u,v)=\int_{\Omega}\nabla u\cdot\nabla v$ and denote by
$\norm{\cdot }_{q}$ the usual norm in $L^q(\Omega)$ and by $d(x,
D)$ the distance between a point $x\in \IR^2$ and the set
$D\subset \IR^2$.
 Let us consider a family of nodal solutions
$(u_p)_{p>1}$.  Throughout the paper, we assume that $u_p$ are low
energy solutions, i.e.~\eqref{assum} holds. The positive part
$u_p^+$ (resp.\ negative part $u_p^-$) are defined as $u_p^+ :=
\max (u_p,0)$ (resp.\ $u_p^-:=\min(u_p,0)$).

Let us define the families $(x_p^+)_{p>1}$ (resp.\ $(x_p^-)_{p>1}$) of
maximum (resp.\
minimum) points in $\Omega$ of $u_p$, i.e.\
$u_p(x_p^+)=\norm{u_p^+}_{\infty}$ and $u_p(x_p^-)=
-\norm{u_p^-}_{\infty}$ and assume w.l.o.g.\ that  $u_p(x_p^+)=
\norm{u_p}_{\infty}$,  i.e.\ $u_p(x_p^+) \geq -u_p(x_p^-)$.  To start with,
we prove that $x_p^+$ cannot go ``too
fast'' to the boundary of $\Omega$ which is the key point to make
some rescaling around $x_p^+$ and obtain a limit profile on
$\IR^2$. More precisely we
prove that $\frac{d(x_{p}^+,\partial\Omega)}{\varepsilon_{p}}\to
+\infty$ (see Proposition~\ref{away...}), where
\begin{equation*}
\varepsilon_p^{-2}:=pu_p(x_p^+)^{p-1}.
\end{equation*}
Then we get the following result.
\begin{ThmIntro}
\label{intro2}
The scaling of $u_p$ around $x_p^+$:
\begin{equation*}
z_p(x):= \frac{p}{u_p(x_p^+)}(u_p(\varepsilon_p x+x_p^+)-u_p(x_p^+))
\end{equation*}
defined on $\Omega^{+}(\varepsilon_p):=\frac{\Omega -
x_p^+}{\varepsilon_p}$ converges, as $p\rightarrow\infty$ to a
function $z$ in $C^2_{\text{loc}}(\IR^2)$. Moreover $z$ must solve
the equation $-\Delta z = e^z$ on $\IR^2$, $z\leq 0$, $z(0)=0$,
$\int_{\IR^2} e^z=8\pi$ and
$z(x)=\log\left(\frac{1}{(1+\frac{1}{8}\abs{x}^2)^2}\right)$.
\end{ThmIntro}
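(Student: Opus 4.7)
The plan is to derive a semilinear equation for the rescaled family $z_p$, establish $C^2_{\text{loc}}$-compactness, pass to a Liouville-type limit on $\IR^2$, and invoke the Chen--Li classification of entire solutions of $-\Delta z = e^z$ with finite mass. A chain-rule computation using $\varepsilon_p^{-2} = pu_p(x_p^+)^{p-1}$ and $u_p(x_p^+)>0$ shows that on $\Omega^+(\varepsilon_p)$
\begin{equation*}
-\Delta z_p = \Bigabs{1 + \tfrac{z_p}{p}}^{p-1}\Bigl(1 + \tfrac{z_p}{p}\Bigr).
\end{equation*}
Since $u_p(x_p^+) = \norm{u_p}_\infty$, I read off $z_p(0)=0$, $z_p\le 0$, and $\abs{1+z_p/p}\le 1$, whence $\norm{\Delta z_p}_\infty \le 1$. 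By Proposition~\ref{away...}, $\Omega^+(\varepsilon_p)$ exhausts $\IR^2$, which in particular forces $\varepsilon_p\to 0$, hence $u_p(x_p^+)^{p-1}\to\infty$ and $\liminf u_p(x_p^+) \ge 1$.

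The central step is the local boundedness of $z_p$. On each ball $B_R$ I decompose $z_p = \phi_p + h_p$, where $\phi_p$ solves $-\Delta\phi_p = -\Delta z_p$ in $B_R$ with $\phi_p = 0$ on $\partial B_R$, and $h_p = z_p - \phi_p$ is harmonic. Calder\'on--Zygmund gives $\norm{\phi_p}_\infty \le C(R)$, so $h_p\le C(R)$ in $B_R$ (using $z_p \le 0$) with $h_p(0) = -\phi_p(0)$ bounded. Applying Harnack to the non-negative harmonic function $C(R) - h_p$ then produces a uniform lower bound for $h_p$, hence for $z_p$, on $B_{R/2}$. Combined with $\norm{\Delta z_p}_\infty \le 1$, Schauder estimates deliver $C^{2,\alpha}_{\text{loc}}$-bounds, and a subsequence converges in $C^2_{\text{loc}}(\IR^2)$ to some $z$. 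On compacta $z_p/p\to 0$, so $(1 + z_p/p)^p \to e^z$, and passing to the limit yields $-\Delta z = e^z$ on $\IR^2$ with $z(0)=0$, $z\le 0$.

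For integrability, the change of variables $y = \varepsilon_p x + x_p^+$ combined with $\varepsilon_p^2 u_p(x_p^+)^{p+1} = u_p(x_p^+)^2/p$ and the identity $\int_\Omega \abs{u_p}^{p+1} = \int_\Omega \abs{\nabla u_p}^2$ gives
\begin{equation*}
\int_{B_R}\Bigabs{1+\tfrac{z_p}{p}}^{p+1}\intd x = \frac{p}{u_p(x_p^+)^2}\int_{\varepsilon_p B_R + x_p^+}\!\!\!\abs{u_p}^{p+1}\intd y \le \frac{p\int_\Omega \abs{\nabla u_p}^2}{u_p(x_p^+)^2},
\end{equation*}
which is bounded uniformly in $R$ and $p$ by \eqref{assum} and the lower bound on $u_p(x_p^+)$. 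Fatou then yields $\int_{\IR^2} e^z < \infty$, and the Chen--Li classification forces $z(x) = \log(8c) - 2\log(1 + c\abs{x-x_0}^2)$ for some $c>0$, $x_0\in\IR^2$. The conditions $0 = z(0) = \max z$ pin down $x_0=0$ and $c=1/8$, giving the stated formula and $\int_{\IR^2} e^z = 8\pi$ as a consequence. Uniqueness of the limit upgrades the subsequential convergence to convergence of the full family.

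The main obstacle is the local boundedness step: the rescaled nonlinearity $\abs{1+z_p/p}^{p-1}(1+z_p/p)$ changes sign across the rescaled nodal set of $u_p$, so a direct maximum-principle lower bound on $z_p$ is unavailable. The Newtonian-potential plus harmonic-remainder splitting, closed with the Harnack inequality, circumvents this using only the one-sided bound $z_p\le 0$ and the $L^\infty$-bound on $\Delta z_p$.
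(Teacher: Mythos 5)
Your proof is correct and follows essentially the same route as the paper's: the same rescaled equation with $\left\vert 1+\frac{z_p}{p}\right\vert\le 1$, the same splitting of $z_p$ into a boundedly-forced Dirichlet part plus a harmonic remainder that is bounded above, controlled at the origin, and closed with Harnack, the same Fatou argument for $\int_{\IR^2}e^z<+\infty$ via condition (A) (the paper uses exponent $p$ plus H\"older where you use $p+1$ plus the Nehari identity, an immaterial difference), and the same classification of entire solutions. The one slip is the non sequitur ``$\varepsilon_p\to 0$ hence $u_p(x_p^+)^{p-1}\to\infty$'' (the factor $p\to\infty$ alone already forces $\varepsilon_p\to 0$); the lower bound $\liminf_{p}u_p(x_p^+)\ge 1$ that your Fatou estimate actually requires is exactly Proposition~\ref{estinfl} and Remark~\ref{rem2} of the paper, so the argument stands once you cite those instead.
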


As a consequence of the previous theorem, we deduce that
$\varepsilon_p^{-1}d(x_p^{+}, NL_p) \to +\infty$ as
$p\rightarrow\infty$, where $NL_p$ denotes the nodal line of
$u_p$. So, in some sense, the rescaled solution about $x_p^+$
ignores  the other nodal domain of $u_p$.  This implies that we
can repeat the same kind of rescaling argument in the positive
nodal domain $\Tilde{\Omega}_{p}^+:=\{x\in\Omega : u_p(x)>0\}$ of
$u_p$. Hence, defining $\Tilde{\Omega}^+(\varepsilon_{p}):=
\frac{\Tilde{\Omega}_{p}^+-   x_p^+}{\varepsilon_{p}}$, we get the
analogous of Theorem~\ref{intro2}:

\begin{ThmIntro}
\label{intro3} The function
$z_p:\Tilde{\Omega}^+(\varepsilon_{p})\to \IR$ converges, as
$p\rightarrow+\infty$, to a function $z$ in
$C^2_{\text{loc}}(\IR^2)$ as $p\rightarrow\infty$. Moreover $z$
must solve the equation $-\Delta z = e^z$ on $\IR^2$, $z\leq 0$,
$z(0)=0$, $\int_{\IR^2} e^z= 8\pi$ and
$z(x)=\log\left(\frac{1}{(1+\frac{1}{8}\abs{x}^2)^2}\right)$.
\end{ThmIntro}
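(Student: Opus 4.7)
The plan is to reduce Theorem~\ref{intro3} to Theorem~\ref{intro2} via a restriction argument, once the consequence $\varepsilon_p^{-1} d(x_p^+, NL_p) \to +\infty$ (stated just before the theorem) is proven. Since Theorem~\ref{intro2} already yields locally uniform $C^2$ convergence of $z_p$ on $\Omega^{+}(\varepsilon_p)$, and since $\Tilde{\Omega}^+(\varepsilon_p) \subseteq \Omega^{+}(\varepsilon_p)$, it suffices to check that the smaller rescaled domain $\Tilde{\Omega}^+(\varepsilon_p)$ also exhausts $\IR^2$ as $p \to +\infty$.

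First, I would establish the escape of the rescaled nodal line by contradiction. If it failed, there would exist a subsequence $p \to +\infty$ and points $y_p \in NL_p$ such that $X_p \eqdef (y_p - x_p^+)/\varepsilon_p$ stays bounded in $\IR^2$, and along a further subsequence $X_p \to x^* \in \IR^2$. Since $u_p(y_p)=0$, the very definition of $z_p$ gives
\begin{equation*}
z_p(X_p) = \frac{p}{u_p(x_p^+)}\bigl(0 - u_p(x_p^+)\bigr) = -p \to -\infty.
\end{equation*}
On the other hand, the $C^0_{\text{loc}}(\IR^2)$ convergence $z_p \to z$ from Theorem~\ref{intro2}, together with $X_p \to x^*$, forces $z_p(X_p) \to z(x^*)$, which is \emph{finite} by the explicit formula $z(x^*) = \log\bigl(1/(1+\tfrac{1}{8}|x^*|^2)^2\bigr)$. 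This contradiction proves $\varepsilon_p^{-1} d(x_p^+, NL_p) \to +\infty$.

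Second, I would combine this with Proposition~\ref{away...}, which gives $\varepsilon_p^{-1} d(x_p^+, \partial\Omega) \to +\infty$. Since $\partial \Tilde{\Omega}_p^+ \subseteq \partial\Omega \cup NL_p$, both pieces have rescaled distance to $x_p^+$ going to $+\infty$, so for every compact $K \subset \IR^2$, one has $K \subset \Tilde{\Omega}^+(\varepsilon_p)$ for $p$ large enough. Because the function $z_p$ appearing in Theorem~\ref{intro3} is nothing else than the restriction of the $z_p$ of Theorem~\ref{intro2} to $\Tilde{\Omega}^+(\varepsilon_p)$, the $C^2(K)$ convergence to $z$ is inherited from Theorem~\ref{intro2}, as are all stated properties of $z$ (the Liouville equation $-\Delta z = e^z$, the bounds $z \le 0$ and $z(0)=0$, the total mass $\int_{\IR^2} e^z = 8\pi$, and the explicit formula).

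The only genuine step to carry out is the first one, and the main point there is that it rests crucially on Theorem~\ref{intro2} producing a \emph{finite} limit profile: the nodal line cannot survive in the rescaled picture because $z_p \equiv -p$ on it. Everything else is a direct restriction argument, so no additional analytic difficulty arises beyond what was already dealt with in Theorem~\ref{intro2}.
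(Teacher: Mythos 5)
Your proposal is correct and follows essentially the same route as the paper: the escape of the rescaled nodal line (Proposition~\ref{controlNL}) is proved there by exactly your contradiction, namely that the level set $\{z_{p}=-p\}$ would meet a fixed ball on which $z_{p}$ is uniformly bounded. The only cosmetic difference is that the paper concludes by ``repeating the proof of Theorem~\ref{intro2}'' in $\Tilde{\Omega}_p^+$, whereas you observe directly that $z_p$ on $\Tilde{\Omega}^+(\varepsilon_p)$ is the restriction of the $z_p$ of Theorem~\ref{intro2} and that every compact set is eventually contained in $\Tilde{\Omega}^+(\varepsilon_p)$; these amount to the same argument.
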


At this point, to the aim of studying the negative part $u_p^-$, let
us observe that we can have
two types of families of solutions satisfying the
assumption~\eqref{assum}, the ones which satisfy
\begin{enumerate}
\item[$(B)$] there exists $K\ge0$ such that  $p\left(u_p(x_p^+) +
    u_p(x_p^-)\right)\rightarrow K$;
\end{enumerate}
and the ones which satisfy
\begin{enumerate}
\item[$(B')$]   $p (u_p(x_p^+) + u_p(x_p^-))\rightarrow\infty$.
\end{enumerate}
The meaning of $(B)$ is that the speeds of convergence of the maximum
and the minimum of $u_p$ (multiplied by $p$) are
comparable. Instead the condition $(B')$ implies that one of the two
values converges faster than the other one.
\begin{RemIntro}
\label{R} It is easy to see that nodal solutions of type $(B)$
exist. Indeed, if $\Omega$ is a ball, it is enough to consider the
antisymmetric, with respect to a diameter, solution with two nodal
regions. We believe that also solution of type $(B')$ should exist
and we conjecture that the radial solution in the ball, with two
nodal regions, should be of type $(B')$. However, the complete
characterization of low energy solutions in the ball will be
analyzed in a subsequent paper.
\end{RemIntro}
In this paper we investigate the alternative $(B)$ that we
conjecture holding for the least energy nodal solutions.\\
First, we prove that, as for $x_p^+$, the condition $(B)$ implies that
$\varepsilon_{p}^{-1}d(x_p^-,\partial\Omega)\to +\infty$ as
$p\rightarrow\infty$. Then we get the following result.

\begin{ThmIntro}
\label{intro2bis}
If $(B)$ holds then the scaling of $u_p$ around $x_p^-$
\begin{equation*}
z_p^-(x):= \frac{p}{u_p(x_p^+)}(-u_p(\varepsilon_p x+x_p^-)-u_p(x_p^+))
\end{equation*}
defined on $\Omega^{-}(\varepsilon_p):=\frac{\Omega -
x_p^-}{\varepsilon_p}$ converges, as $p\rightarrow+\infty$, to a
function $z$ in $C^2_{\text{loc}}(\IR^2)$. Moreover $z$ must solve
the equation $-\Delta z = e^z$ on $\IR^2$, $z\leq 0$,
$\int_{\IR^2} e^z=8\pi$ and
$z(x)=\log\left(\frac{\mu}{(1+\frac{\mu}{8}\abs{x}^2)^2}\right)$
for some $0 < \mu\leq 1$.  When $K=0$ in condition~$(B)$, we get
$\mu =1$.
\end{ThmIntro}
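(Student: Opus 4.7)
The plan is to follow the strategy used for Theorem~\ref{intro2}, adapted to the minimum point. The essential novelty is that the rescaling is performed with the parameter $\varepsilon_p$ associated to $x_p^+$ rather than to $x_p^-$; under hypothesis~$(B)$ this still produces a nontrivial limit, but the mismatch between the two natural scales is what encodes itself in the parameter $\mu$.

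A direct computation, using that $-u_p$ solves the same equation as $u_p$, yields
\begin{equation*}
-\Delta z_p^-(x) = \bigl|1 + z_p^-(x)/p\bigr|^{p-1}\bigl(1 + z_p^-(x)/p\bigr)
\qquad \text{on } \Omega^-(\varepsilon_p).
\end{equation*}
Since $u_p(x_p^+)=\norm{u_p}_\infty$, we have $z_p^-\le 0$, with maximum at $0$ equal to $z_p^-(0)=-p\bigl(u_p(x_p^+)+u_p(x_p^-)\bigr)/u_p(x_p^+)$. The analysis of the positive region (Theorems~\ref{intro2}--\ref{intro3}) combined with the energy assumption~\eqref{assum} yields $u_p(x_p^+)\to\sqrt{e}$; together with~$(B)$ this shows $z_p^-(0)\to -K/\sqrt{e}$, which is finite and $\le 0$.

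Using the stated fact $\varepsilon_p^{-1} d(x_p^-,\partial\Omega)\to\infty$ and arguing as after Theorem~\ref{intro2} (now applied to $-u_p$ around $x_p^-$), one shows $\varepsilon_p^{-1} d(x_p^-,NL_p)\to\infty$, so that every compact $K\subset\IR^2$ is contained in $\Tilde\Omega^-(\varepsilon_p)$ for $p$ large; on such sets, $0\le 1+z_p^-/p\le 1$ and the right-hand side of the PDE is bounded by $1$. The main technical step is a uniform local $L^\infty$ bound on $z_p^-$: applying Harnack-type estimates to the nonnegative function $z_p^-(0)-z_p^-$, which has bounded Laplacian and vanishes at the origin, provides such a bound. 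Standard elliptic regularity then delivers $C^{2,\alpha}_{\text{loc}}$ compactness and, along a subsequence, $z_p^-\to z$ in $C^2_{\text{loc}}(\IR^2)$. Passing to the limit with $(1+z_p^-/p)^p\to e^z$ locally uniformly gives $-\Delta z=e^z$ on $\IR^2$ with $z\le 0$ and $z(0)=-K/\sqrt{e}$, so $z$ is nontrivial.

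To identify $z$ explicitly, one first checks $\int_{\IR^2} e^z<\infty$ by Fatou applied to $\int_{\Tilde\Omega^-(\varepsilon_p)}(1+z_p^-/p)^{p+1}\intd y = p\int_{\Tilde\Omega_p^-}|u_p|^{p+1}\intd x/u_p(x_p^+)^2$, which is uniformly bounded by~\eqref{assum}. The Chen--Li classification of finite-mass solutions of $-\Delta z=e^z$ on $\IR^2$ then forces $z(x)=\log\bigl(\mu/(1+\mu|x-x_0|^2/8)^2\bigr)$, with $\int_{\IR^2} e^z=8\pi$ automatically. Since $z_p^-$ attains its maximum at $0$, so does $z$, hence $x_0=0$; evaluation at the origin gives $\log\mu=-K/\sqrt{e}\le 0$, whence $\mu=e^{-K/\sqrt{e}}\in(0,1]$, and $\mu=1$ exactly when $K=0$. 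The principal obstacle is the local $L^\infty$ estimate on $z_p^-$; once it is in hand, the rest parallels the argument for the positive nodal region, with the extra information from $(B)$ entering only to pin down~$\mu$.
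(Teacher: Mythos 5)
Your overall strategy is the paper's: rescale, get local $L^\infty$ bounds via a Harnack-type argument using that $z_p^-\le 0$ with $z_p^-(0)$ bounded below by condition $(B)$, pass to the limit, control the mass by Fatou, and invoke the classification of finite-mass solutions of $-\Delta z=e^z$, with $(B)$ entering again to pin down $\mu$ via $z(0)=\log\mu$. You have correctly isolated the role of $(B)$. However, there is one genuine gap: you take $\varepsilon_p^{-1}d(x_p^-,\partial\Omega)\to+\infty$ as a ``stated fact,'' whereas this is precisely the new ingredient that must be proved before any ball $B(0,R)$ sits inside $\Omega^-(\varepsilon_p)$ and your interior Harnack estimate can even be applied. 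The paper obtains it by repeating the boundary-reflection argument of Proposition~\ref{away...} for $-u_p$ around $x_p^-$, and the ``only delicate point'' there is again that $\psi_p(0)=z_p^-(0)-w_p(0)$ is bounded below, which is where $(B)$ is used. You possess exactly the right mechanism (boundedness of $z_p^-(0)$ under $(B)$), but you never run it near the boundary, so your proof is not self-contained at its one nonroutine step. Relatedly, your ordering is mildly circular: you first claim $\varepsilon_p^{-1}d(x_p^-,NL_p)\to\infty$ (which in the paper is a \emph{consequence} of the uniform local bounds, as in Proposition~\ref{controlNL}) in order to place compacts inside $\Tilde\Omega^-(\varepsilon_p)$; this detour is unnecessary since $\bigl|1+z_p^-/p\bigr|\le 1$ holds on all of $\Omega^-(\varepsilon_p)$, which is all the Harnack step needs.

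A second, less serious, overreach: you assert $u_p(x_p^+)\to\sqrt e$ as a consequence of Theorems~\ref{intro2}--\ref{intro3} and \eqref{assum}. At this stage the paper only has $\liminf\abs{u_p(x_p^+)}\ge 1$ (Proposition~\ref{estinfl}) and boundedness from above (Proposition~\ref{linfb}); the limit $\sqrt e$ is Theorem~\ref{intro5} and requires the extra Morse-index hypothesis. Hence your formula $\mu=e^{-K/\sqrt e}$ is not justified here. This does not damage the statement actually claimed: from $(B)$ and $1\le\liminf u_p(x_p^+)\le\limsup u_p(x_p^+)<\infty$ one still gets that $z(0)=\log\mu=-K/\lim u_p(x_p^+)$ (along a subsequence) is finite and nonpositive, so $0<\mu\le 1$, with $\mu=1$ exactly when $K=0$. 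You should state it in that weaker, correct form.
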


As for the case of $x_p^+$, as a consequence of
Theorem~\ref{intro2bis}, we get that
$\varepsilon_{p}^{-1}d(x_{p}^-,NL_{p})\to +\infty$, which allows
to do the  same rescaling in the  negative nodal domain
$\Tilde{\Omega}_p^-:= \{x\in\Omega : u_p(x)<0\}$, obtaining the
analogous of Theorem~\ref{intro2bis}.

\begin{ThmIntro}
\label{intro4}
If $(B)$ holds, the function
\begin{equation*}
z_p^-(x):= \frac{p}{\norm{u_p}_{\infty}}\left( -u_p^-(\varepsilon_{p}
  x+x_p^-)-\norm{u_p}_{\infty}\right)
\end{equation*}
defined on $\Tilde{\Omega}^-(\varepsilon_{p}):= \frac{\Tilde{\Omega}_{p}^--
  x_p^-}{\varepsilon_{p}}$  converges, as
$p\rightarrow+\infty$, to a function $z$ in
$C^2_{\text{loc}}(\IR^2)$. Moreover $z$ must solve the equation
$-\Delta z = e^z$ on $\IR^2$, $z\leq 0$,  $\int_{\IR^2} e^z =
8\pi$ and
$z(x)=\log\left(\frac{\mu}{(1+\frac{\mu}{8}\abs{x}^2)^2}\right)$
for some $0 <\mu\leq 1$.  When $K=0$ in condition~$(B)$, we get
$\mu =1$.
\end{ThmIntro}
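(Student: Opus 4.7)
The plan is to reduce Theorem \ref{intro4} to Theorem \ref{intro2bis}. Concretely, I will show that on every fixed compact set of $\IR^2$, the rescaled negative nodal domain $\Tilde\Omega^-(\varepsilon_p)$ eventually contains that compact set, and that on it the two versions of $z_p^-$ (the one of Theorem \ref{intro4} and the one of Theorem \ref{intro2bis}) are pointwise identical. The $C^2_{\text{loc}}$-convergence and the explicit form of the limit then transfer at once.

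The key preliminary step is the separation
\[
\varepsilon_p^{-1}\, d(x_p^-, NL_p) \to +\infty \qquad (p\to+\infty).
\]
If this failed, along a subsequence one could choose $y_p \in NL_p$ with $\eta_p := (y_p - x_p^-)/\varepsilon_p$ converging to some $y_0 \in \IR^2$. Denote by $\zeta_p$ the scaling used in Theorem \ref{intro2bis}, defined on the whole of $\Omega^-(\varepsilon_p) \supset \Tilde\Omega^-(\varepsilon_p)$ and converging in $C^2_{\text{loc}}(\IR^2)$ to the explicit, locally bounded limit $z$. Using $u_p(y_p)=0$ and $\norm{u_p}_\infty = u_p(x_p^+)$, a direct substitution gives $\zeta_p(\eta_p) = -p \to -\infty$, contradicting $\zeta_p(\eta_p)\to z(y_0)\in\IR$.

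Once this separation is known, for each $R>0$ and every sufficiently large $p$ one has $B_R(0)\subset \Tilde\Omega^-(\varepsilon_p)$: the previous step gives $\varepsilon_p R < d(x_p^-, NL_p)$, and the $x_p^-$-analogue of Proposition \ref{away...}, valid under $(B)$ as announced in the introduction, gives $\varepsilon_p R < d(x_p^-, \partial\Omega)$. On such a ball $u_p<0$, hence $u_p^- = u_p$ in the convention of the paper, and since $\norm{u_p}_\infty = u_p(x_p^+)$ the function $z_p^-$ of Theorem \ref{intro4} coincides, on $B_R(0)$, with the scaling used in Theorem \ref{intro2bis}. Consequently, the $C^2_{\text{loc}}(\IR^2)$-convergence, the equation $-\Delta z = e^z$, the sign $z\le 0$, the mass identity $\int_{\IR^2} e^z = 8\pi$, the explicit formula $z(x)=\log(\mu/(1+\frac{\mu}{8}\abs{x}^2)^2)$, and the fact that $\mu=1$ when $K=0$, all descend directly from Theorem \ref{intro2bis}.

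The main obstacle is the separation $\varepsilon_p^{-1}d(x_p^-, NL_p)\to +\infty$; once that is in hand, the theorem reduces to restricting an already-proven convergence to an asymptotically full sub-region, and no new rescaling analysis is needed.
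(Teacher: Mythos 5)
Your proposal is correct and follows essentially the same route as the paper: the paper likewise first deduces $\varepsilon_p^{-1}d(x_p^-,NL_p)\to+\infty$ from Theorem~\ref{intro2bis} by the argument of Proposition~\ref{controlNL} (a nodal point at bounded rescaled distance would force $z_p^-=-p\to-\infty$ at a point where the limit is finite), and then transfers the conclusion of Theorem~\ref{intro2bis} to the rescaled nodal domain. Your observation that the two scalings literally coincide on each ball $B_R(0)$ eventually contained in $\Tilde{\Omega}^-(\varepsilon_p)$ is just a slightly more explicit way of saying what the paper phrases as ``repeat the proof of Theorem~\ref{intro3}''.
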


\begin{RemIntro}
\label{R1}
Another natural condition to make the rescaling in the negative
nodal domain without assuming condition $(B)$ could be to consider the parameter
$$\tilde\varepsilon_{p}^{-2}=p \abs{u_p^-(x_p^-)}^{p-1}$$
which is now just related
to the negative part of $u$ (we are not using the $L^\infty$-norm of
$u_p$ but the $L^\infty$-norm of $u_p^-$) and assume that
$\tilde\varepsilon_{p}^{-1}d(x_p^-,NL_p)\to
+\infty$ (as before $NL_p$ is the nodal line of $u_p$) . This
assumption is essentially equivalent to condition $(B)$ and allows to
prove that
$\tilde\varepsilon_{p}^{-1}d(x_p^-,\partial\Omega)\to+\infty$ (see
Proposition~\ref{away2...}). Then one could repeat
the proof of Theorem~\ref{intro4} obtaining for $z_p(x):=
\frac{p}{u_p(x_p^-)}\left( u_p^-(\tilde\varepsilon_{p}
  x+x_p^-)-u_p(x_p^-)\right)$
the same assertion as for $z_p^-$.
\end{RemIntro}
If the positive part of $u$, i.e. $u_p^+$, as a solution
of~\eqref{pblP} in $\Tilde{\Omega}^+(\varepsilon_{p})$, has Morse
index one then the previous results allow to obtain the exact
value of the limits of $\norm{u_p^\pm}_{\infty}$, as
$p\rightarrow+\infty$.
\begin{ThmIntro}
\label{intro5} Let us assume that the Morse index of $u_p^+$ as
a solution of \eqref{pblP} in $\Tilde{\Omega}_p^+$ is one. Then we
have: $\norm{u_p^+}_{\infty}\to e^{1/2}$.  If also $(B)$ holds
then $\norm{u_p^-}_{\infty}\to e^{1/2}$.
\end{ThmIntro}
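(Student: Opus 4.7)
The strategy is to transport to $u_p^+$, viewed as a positive solution of~\eqref{pblP} on the varying nodal domain $\tilde{\Omega}_p^+$, the sharp $L^\infty$-asymptotic $e^{1/2}$ known for positive ground state solutions from Adimurthi--Grossi \cite{grossi} (building on Ren--Wei \cite{ren,rw}). The Morse index one hypothesis plays the role of the ground state (mountain-pass) characterization. Throughout set $M_p := \norm{u_p^+}_{\infty} = u_p(x_p^+)$.

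First I would exploit Theorem~\ref{intro3}. Multiplying $-\Delta u_p = (u_p^+)^p$ on $\tilde{\Omega}_p^+$ by $u_p^+$ and integrating by parts gives $\int_{\tilde{\Omega}_p^+} \abs{\nabla u_p^+}^2 = \int_{\tilde{\Omega}_p^+} (u_p^+)^{p+1}$; the change of variables $y = \varepsilon_p x + x_p^+$, together with $u_p^+(y) = M_p(1 + z_p(x)/p)$ and $\varepsilon_p^{-2} = p M_p^{p-1}$, then yields the key identity
\begin{equation*}
p \int_{\tilde{\Omega}_p^+} \abs{\nabla u_p^+}^2 = M_p^2 \int_{\tilde{\Omega}^+(\varepsilon_p)} \Bigl(1 + \frac{z_p}{p}\Bigr)^{p+1} dx.
\end{equation*}
The Morse index one hypothesis prevents any secondary blow-up of $u_p^+$ inside $\tilde{\Omega}_p^+$, so the single bubble of Theorem~\ref{intro3} carries all of the concentrated mass, and the right-hand integral converges to $\int_{\IR^2} e^z = 8\pi$. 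Independently, Morse index one characterizes $u_p^+$ as a mountain-pass-type critical point of the energy functional on $\tilde{\Omega}_p^+$; combined with the fact that $x_p^+$ lies deep inside $\tilde{\Omega}_p^+$ at scale $\varepsilon_p$ (Proposition~\ref{away...} together with the paragraph following Theorem~\ref{intro2} give $\varepsilon_p^{-1} d(x_p^+, \partial \tilde{\Omega}_p^+) \to +\infty$), an adaptation of the Pohozaev/Green's-function analysis of \cite{grossi,ren,rw} yields $p \int_{\tilde{\Omega}_p^+} \abs{\nabla u_p^+}^2 \to 8\pi e$. Comparing this with the rescaling identity above produces $M_p^2 \to e$, i.e.\ $\norm{u_p^+}_{\infty} \to e^{1/2}$.

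For the second assertion, condition $(B)$ reads $p(u_p(x_p^+) + u_p(x_p^-)) = p(\norm{u_p^+}_{\infty} - \norm{u_p^-}_{\infty}) \to K \in [0,\infty)$, which forces $\norm{u_p^+}_{\infty} - \norm{u_p^-}_{\infty} \to 0$; combined with $\norm{u_p^+}_{\infty} \to e^{1/2}$ this immediately yields $\norm{u_p^-}_{\infty} \to e^{1/2}$. The main obstacle is the sharp energy identification $p \int_{\tilde{\Omega}_p^+} \abs{\nabla u_p^+}^2 \to 8\pi e$: the original $L^\infty$-asymptotics of \cite{grossi,ren,rw} are established on a fixed domain, so one must transfer them to the $p$-dependent subdomain $\tilde{\Omega}_p^+$, a step made possible by the strong interior concentration of $x_p^+$ at the scale $\varepsilon_p$.
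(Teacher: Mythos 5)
Your overall skeleton is right and matches the paper's: the identity
\begin{equation*}
1=\frac{\int_{\Omega}\abs{u_p^+}^{p+1}}{\norm{u_p^+}_{p+1}^{p+1}}
=\frac{\norm{u_p^+}_{\infty}^{2}}{p\norm{u_p^+}_{p+1}^{p+1}}
\int_{\Tilde{\Omega}^{+}(\varepsilon_p)}\Bigl\vert 1+\frac{z_p}{p}\Bigr\vert^{p+1}
\end{equation*}
together with $p\norm{u_p^+}_{p+1}^{p+1}=p\int_{\Omega}\abs{\nabla u_p^+}^2\to 8\pi e$ and the convergence of the rescaled integral to $8\pi$ does give $\norm{u_p^+}_{\infty}\to e^{1/2}$, and your treatment of the second assertion via $(B)$ is exactly the paper's. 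But you have misplaced the difficulty. The step you flag as the "main obstacle" --- proving $p\int_{\Tilde{\Omega}_p^+}\abs{\nabla u_p^+}^2\to 8\pi e$ by adapting the Pohozaev/Green's-function analysis of Adimurthi--Grossi to the varying domain --- requires no work at all: it is already contained in Remark~\ref{rem}, since the Ren--Wei Sobolev estimate used in Lemma~\ref{estH-} forces $\liminf p(\tfrac12-\tfrac1{p+1})\int\abs{\nabla u_p^\pm}^2\ge 4\pi e$ for each signed part separately, and assumption $(A)$ caps the sum, so each part carries exactly $8\pi e$. No Morse index is needed there.

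The genuine gap is the step you dispose of in one sentence: "the Morse index one hypothesis prevents any secondary blow-up, so the right-hand integral converges to $8\pi$." This is precisely the hard content of the theorem and it is not a soft consequence of Theorem~\ref{intro3}, which only gives $C^2_{\text{loc}}$ convergence and hence, by Fatou, the one-sided bound $\liminf\int\abs{1+z_p/p}^{p+1}\ge 8\pi$ --- mass could still escape to infinity in $\Tilde{\Omega}^+(\varepsilon_p)$. The paper's mechanism is spectral: one exhibits an explicit negative direction $w_p=x\cdot\nabla z_p+\frac{2}{p-1}z_p+\frac{2p}{p-1}$ for the rescaled linearized operator supported in a fixed ball $B(0,r)$ (Lemma~\ref{careful}); since the Morse index is one, the first Dirichlet eigenvalue of that operator on $\Tilde{\Omega}^+(\varepsilon_p)\setminus B(0,r)$ must then be positive (Lemma~\ref{principle1}), so the maximum principle holds there; arguing by contradiction under $\lim\norm{u_{p_n}^+}_\infty<e^{1/2}$ (the strict inequality is used in the boundary estimate on $\partial\Tilde{\Omega}^+(\varepsilon_{p_n})$), one deduces the global comparison $z_{p_n}\le z+C$, which by dominated convergence yields $\int\abs{1+z_{p_n}/p_n}^{p_n+1}\to 8\pi$ and the contradiction. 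Without this (or an equivalent) argument, your proof asserts its key analytic input rather than establishing it; also note that the paper's argument is structured as a proof by contradiction precisely because the comparison estimate needs $\norm{u_{p_n}^+}_\infty$ strictly below $e^{1/2}$, a feature your direct formulation cannot reproduce as stated.
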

The result of the previous statement is similar to the one
obtained in~\cite{grossi} for the least energy positive solution
of \eqref{pblP}.

Let us remark that the additional assumption on the Morse index of
$u^+_p$ holds for any nodal solutions with Morse index $2$, hence,
in particular, for least energy nodal solutions.

Our last result gives the asymptotic behavior of the nodal
solutions in the whole domain $\Omega$.

Let us denote by $G(x,y)=-\frac1{2\pi}\log|x-y|+H(x,y)$ the
Green's function of $\Omega$ and by $H$ its regular part. Finally,
let $x^\pm$ be the limit point of $x_{p}^{\pm}$  as $p\to
+\infty$.
\begin{ThmIntro}
\label{introthm} Under the same hypothesis of Theorem
\ref{intro5}, $p u_p$ converges, as $p\rightarrow+\infty$, to the
function
  $8\pi e^{1/2}( G(\cdot, x^+)-G(\cdot, x^-))$  in
  $\C^2_{\text{loc}}(\overline{\Omega}\setminus\{x^-, x^+\})$ and $x^+
  \neq x^-\in\Omega$. Moreover the limit points $x^+$ and $x^-$ satisfy the system

\begin{equation*}
\left\{
\begin{aligned}
\frac{\partial G}{\partial x_i}(x^+, x^-)-\frac{\partial H}{\partial
  x_i}(x^+, x^+)=0,\\
\frac{\partial G}{\partial x_i}(x^-, x^+)-\frac{\partial H}{\partial
  x_i}(x^-, x^-)=0,
\end{aligned}
\right.
\end{equation*}
\vskip0.21cm\noindent
for $i=1,2$. Finally, the nodal line of $u_p$
intersects the boundary of $\Omega$ for $p$ large.
\end{ThmIntro}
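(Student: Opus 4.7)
The strategy is to pass to the limit in the Green representation of $pu_p$. First, I would identify the weak-$*$ limit of the measures $p|u_p|^{p-1}u_p\,dx$. Decomposing $p|u_p|^{p-1}u_p = p(u_p^+)^p - p(-u_p^-)^p$ and changing variables $y=(x-x_p^+)/\varepsilon_p$ in the positive part, using the identity $p\,u_p(x_p^+)^{p-1}\varepsilon_p^2=1$, one has
\begin{equation*}
\int_\Omega p(u_p^+)^p\phi\,dx = u_p(x_p^+)\int_{\tilde\Omega^+(\varepsilon_p)}\bigl(1+z_p(y)/p\bigr)^p\phi(\varepsilon_p y+x_p^+)\,dy,
\end{equation*}
which by Theorem~\ref{intro3} combined with $u_p(x_p^+)\to e^{1/2}$ from Theorem~\ref{intro5} and $\int_{\IR^2}e^z=8\pi$ converges to $8\pi e^{1/2}\phi(x^+)$. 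An analogous computation on the negative nodal region using Theorem~\ref{intro4} gives $8\pi e^{1/2}\phi(x^-)$. After checking that the mass outside fixed neighborhoods of the $x_p^\pm$ is negligible (this is a standard consequence of the bubble concentration), one obtains $p|u_p|^{p-1}u_p\rightharpoonup 8\pi e^{1/2}(\delta_{x^+}-\delta_{x^-})$ in the sense of measures.

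Plugging into the Green representation
\begin{equation*}
pu_p(x)=\int_\Omega G(x,y)\bigl(p(u_p^+)^p-p(-u_p^-)^p\bigr)(y)\,dy
\end{equation*}
and fixing $x$ away from $\{x^+,x^-\}$ yields pointwise convergence $pu_p(x)\to 8\pi e^{1/2}(G(x,x^+)-G(x,x^-))$. Since on compact subsets of $\overline\Omega\setminus\{x^+,x^-\}$ one has $|u_p|=O(1/p)$, the nonlinearity $p|u_p|^{p-1}u_p$ vanishes in $L^\infty$ there, and standard elliptic regularity upgrades the convergence to $C^2_{\text{loc}}(\overline\Omega\setminus\{x^+,x^-\})$.

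To rule out $x^+=x^-$, I would invoke the global Pohozaev identity on $\Omega$ (origin fixed inside $\Omega$), which in dimension two reads
\begin{equation*}
\frac{4}{p+1}\int_\Omega|u_p|^{p+1}\,dx=\int_{\partial\Omega}(\partial_\nu u_p)^2\,(x\cdot\nu)\,dS.
\end{equation*}
Multiplying by $p^2$ and using $p\int|u_p|^{p+1}=p\int|\nabla u_p|^2\to16\pi e$, the left-hand side tends to $64\pi e$. If $x^+=x^-\in\Omega$, the limit $w$ is identically zero on $\overline\Omega\setminus\{x^+\}$, so $p\partial_\nu u_p\to 0$ uniformly on $\partial\Omega$, giving $0=64\pi e$, a contradiction. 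The critical point system is then derived from the local Pohozaev identity obtained by multiplying $-\Delta u_p=|u_p|^{p-1}u_p$ by $\partial_i u_p$ and integrating on small disks $B_\delta(x^\pm)$: after multiplying by $p^2$, the nonlinear boundary term involving $|u_p|^{p+1}$ vanishes super-polynomially since $|u_p|=O(1/p)$ on $\partial B_\delta$, while the quadratic term converges by $C^2_{\text{loc}}$ to $\int_{\partial B_\delta}\bigl[\tfrac12|\nabla w|^2\nu_i-\partial_i w\,\partial_\nu w\bigr]\,dS$; expanding $w(x)=\mp 4e^{1/2}\log|x-x^\pm|+R^\pm(x)$ near $x^\pm$ and letting $\delta\to 0$, the standard residue computation yields $\nabla R^\pm(x^\pm)=0$, which rearranges exactly to the stated system.

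The nodal-line assertion follows from the boundary behavior of $w$. By Hopf's lemma $\partial_\nu G(\cdot,x^\pm)<0$ on $\partial\Omega$, with $|\partial_\nu G(y,x^\pm)|\to+\infty$ as $y\to x^\pm$; hence $\partial_\nu w=8\pi e^{1/2}(\partial_\nu G(\cdot,x^+)-\partial_\nu G(\cdot,x^-))$ is strongly negative on $\partial\Omega$ near $x^+$ and strongly positive near $x^-$. By $C^2$ convergence on compact subsets of $\partial\Omega$, the same sign alternation holds for $p\partial_\nu u_p$ when $p$ is large, so both nodal regions of $u_p$ reach $\partial\Omega$, and by connectedness $NL_p$ must intersect $\partial\Omega$. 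The main obstacle I anticipate is the interior-localization of $x^\pm$ needed in the Pohozaev contradiction: while Proposition~\ref{away...} forbids $d(x_p^\pm,\partial\Omega)$ from being $O(\varepsilon_p)$, it does not a priori exclude a slower approach to the boundary, so ruling out boundary concentration requires a separate blow-up analysis near $\partial\Omega$.
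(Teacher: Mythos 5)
Your outline reproduces the paper's Proposition~\ref{green} (Green representation plus concentration of $p|u_p|^{p-1}u_p$ at $x^\pm$, global Pohozaev to rule out $x^+=x^-$, local Pohozaev on small balls for the critical-point system, sign change of $\partial_\nu(pu_p)$ for the nodal line), but it stops exactly where the paper's proof of Theorem~\ref{introthm} begins: you never exclude the possibility that $x^+$ or $x^-$ lies on $\partial\Omega$, and you say so yourself in your last sentence. This is a genuine gap, not a technicality: Proposition~\ref{away...} only gives $d(x_p^\pm,\partial\Omega)/\varepsilon_p\to+\infty$, and if, say, $x^+\in\partial\Omega$ then $G(\cdot,x^+)\equiv 0$ and the limit of $pu_p$ degenerates to $-8\pi e^{1/2}G(\cdot,x^-)$ (or to $0$ if both points reach the boundary), so every assertion of the theorem fails. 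The paper closes this gap by a boundary reflection plus Pohozaev argument: near a putative boundary limit point one flattens $\partial\Omega$ and extends $u_p$ by odd reflection to a solution $u_p^*$ of the same equation on a full neighbourhood $D^*$; multiplying the Pohozaev identity on $D^*$ by $p^2$, the left-hand side $\frac{p^2}{p+1}\int_{D^*}|u_p^*|^{p+1}$ stays bounded below by $8\pi e+o(1)$ (the bubble's mass cannot leave $D^*$ without contradicting $pu_p\to0$ in $C^1$ away from $x^\pm$), while the boundary terms either tend to zero (both points on $\partial\Omega$, since then $pu_p^*\to0$ in $C^1$ near $\partial D^*$) or can be made strictly smaller than $8\pi e$ by shrinking $D^*$ (one point interior, using $|\nabla G(x,x^-)|\le K/|x-x^-|$ and $G(\cdot,x^-)=0$ on $\partial\Omega$). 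Some such argument is indispensable; without it you have only the four alternatives of Proposition~\ref{green}.

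Two smaller points. First, in your nodal-line argument the claim that $|\partial_\nu G(y,x^\pm)|\to+\infty$ as $y\to x^\pm$ is vacuous: once boundary concentration is excluded the points $x^\pm$ are interior, so $G(\cdot,x^\pm)$ is smooth up to $\partial\Omega$ and $y\in\partial\Omega$ never approaches $x^\pm$. The correct reason $\partial_\nu w$ must change sign on $\partial\Omega$ (the one the paper uses) is that $\int_{\partial\Omega}\partial_\nu\bigl(G(\cdot,x^+)-G(\cdot,x^-)\bigr)=0$ while this normal derivative is not identically zero when $x^+\neq x^-$; combined with Hopf's lemma applied to $u_p$ one-signed near $\partial\Omega$, this gives the contradiction. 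Second, the statement that no mass is lost at infinity in the rescaled integrals is not automatic from ``bubble concentration'': in the paper it rests on the Morse index hypothesis through Lemmas~\ref{careful}--\ref{principle1} and the comparison argument in the proof of Theorem~\ref{intro5}, which yield $\int_{\tilde\Omega^\pm(\varepsilon_p)}|1+z_p/p|^{p}\to 8\pi$ (Propositions~\ref{conv1} and~\ref{conv2}); you should cite or reprove that step rather than treat it as standard.
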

The result of Theorem~\ref{introthm} gives a very accurate description
of the profile of the low energy solutions of
type $(B)$ in terms of the Green function of $\Omega$ and of its
regular part. It is also remarkable that the property
that the nodal line intersects $\partial\Omega$ holds for this kind of
solutions in any bounded domain $\Omega$, extending so the result
proved in
\cite{aftalion} for least energy solutions in balls or annulus. It is
also reminiscent of the property of the second eigenfunction
of the laplacian in planar convex domains (see~\cite{melas}), though
we are not analyzing the case of $p$ close to $1$ as in
\cite{bbgv,gt}.\\
Let us remark that nodal solutions with this property have been
constructed in~\cite{espo1,espo2}.\\
Finally we would like to point out that our analysis is similar to
the one carried out in~\cite{bep3,bep1,bep2} for low energy nodal
solutions of an almost critical problem or of the Brezis-Nirenberg
problem in dimension $N\ge3$. However, the techniques and the
proofs are completely different since in~\cite{bep3,bep1,bep2} the
nodal solutions whose energy is close to $2S_N$ ($S_N$ is the best
Sobolev constant in $\rr^N$) can be written almost explicitly.

The outline of the paper is as follows. In Section~$2$, we recall
the variational characterization of the problem and  we prove
Theorem~\ref{intro1} and some useful asymptotic estimates. In
Section~$3$, we show that $x_p^+ $ cannot go too fast to the
boundary and then prove Theorem~\ref{intro2} and
Theorem~\ref{intro2bis} using a rescaling argument on the whole
domain $\Omega$. Then, using a rescaling argument on the nodal
domains, we prove Theorem~\ref{intro3} and Theorem~\ref{intro4}.
In Section~$4$, we improve the bounds given in Section~$2$ to
obtain Theorem~\ref{intro5}. Finally, in Section~$5$, we prove
Theorem~\ref{introthm}.

{\bf Acknowledgment} We would like to thank A. Adimurthi for some
useful discussions, in particular about the proof of Proposition
~\ref{away...}.

\section{Variational setting and estimates}\ \\
We recall that solutions of problem \eqref{pblP} are the critical points of the
energy functional $\E_{p}$ defined on $H^{1}_{0}(\Omega)$ by
\[\E_{p}(u)=\frac{1}{2}\int_{\Omega}\vert \nabla u\vert^{2}
-\frac{1}{p+1}\int_{\Omega}\vert u\vert^{p+1}.\]
The Nehari manifold $\mathcal{N}_{p}$ and the nodal Nehari set
$\mathcal{M}_{p}$ are defined by
\[\mathcal{N}_{p}:=\{u\in H^{1}_{0}(\Omega)\setminus\{0\}:\langle
\intd \E_{p}(u),u\rangle =0\},\quad \mathcal{M}_{p}:=\{u\in
H^{1}_{0}(\Omega): u^{\pm}\in \mathcal{N}_{p}\},\]
where $u^{+}(x):=\max(u(x),0)$ and $u^{-}(x):=\min(u(x),0)$. If
$u\in H^{1}_{0}(\Omega)$, $u^{+}\neq 0$ and $u^{-}\neq 0$ then
$u\in\mathcal{M}_{p}$ if and only if
\begin{equation}\label{5}
\int_{\Omega}\vert\nabla u^{+}\vert^{2}=\int_{\Omega}\vert
u^{+}\vert^{p+1} \text{ and } \int_{\Omega}\vert\nabla u^{-}\vert^{2}
= \int_{\Omega}\vert u^{-}\vert^{p+1}.
\end{equation}

For any $u\neq 0$ fixed, there exists a unique multiplicative factor
$\alpha$ such that $\alpha u\in \N_p$. If $u$ changes sign then there
exists an unique couple $(\alpha_+, \alpha_-)$ such  that $\alpha_+
u^+ + \alpha_- u^-\in \mathcal{M}_p$.

The interest of $\mathcal{N}_p$ (resp.\ $\mathcal{M}_{p}$) comes from
the fact that it
contains all the non-zero (resp.\ sign-changing) critical points of
$\E_{p}$. If $u$
minimizes $\E_{p}$ on $\mathcal{N}_p$ (resp.\ $\mathcal{M}_{p}$) then
$u$ is a (resp.\ nodal) solution
of Problem~\eqref{pblP} usually referred to as the \emph{ground state
  solutions} (resp.\ \emph{least energy nodal
solutions}). So, we need to solve
\begin{equation*}
\inf \left\{\left(
\frac{1}{2}-\frac{1}{p+1}\right)\int_{\Omega}\abs{\nabla u}^2\right\} \text{ on }
\int_{\Omega}\abs{\nabla u^{\pm}}^2=\int_{\Omega} (u^{\pm})^{p+1}
\end{equation*}
to characterize the least energy nodal solutions.
\begin{Thm}[T.~Bartsch, T.~Weth~\cite{bartweth}]
There exists a least energy nodal solution of problem \eqref{pblP}
which has exactly two nodal domains and Morse index $2$.
\end{Thm}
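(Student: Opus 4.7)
The plan is to obtain the least energy nodal solution as a minimizer of $\E_p$ on $\mathcal{M}_p$, then to argue that this constrained minimizer is actually a free critical point, and finally to read off the shape (two nodal domains) and Morse index bound from the structure of $\mathcal{M}_p$.

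First I would establish existence of a minimizer. Take a minimizing sequence $(u_n)\subset \mathcal{M}_p$ for $\E_p$. The Nehari relations \eqref{5} yield $\E_p(u_n)=(1/2-1/(p+1))\int_\Omega|\nabla u_n|^2$, so $(u_n)$ is bounded in $H^1_0(\Omega)$. Passing to a subsequence, $u_n\wto u$ weakly in $H^1_0$ and, since $\Omega\subset\IR^2$ is bounded, $u_n\to u$ strongly in $L^{p+1}$ by Rellich--Kondrachov. The nontrivial point is that $u^+\not\equiv 0$ and $u^-\not\equiv 0$: this follows because on $\mathcal{N}_p$ there is a uniform lower bound $\|v\|_{p+1}\ge c_p>0$ (indeed $\|v\|_{p+1}^{p+1}=\|\nabla v\|_2^2\ge S\|v\|_{p+1}^2$ forces $\|v\|_{p+1}\ge S^{1/(p-1)}$), and this bound passes to the strong $L^{p+1}$ limits of $u_n^\pm$. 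Then one rescales $u^\pm$ by the unique positive factors $(\alpha_+,\alpha_-)$ so that $\alpha_+u^++\alpha_-u^-\in\mathcal{M}_p$, and a standard weak lower semicontinuity argument together with the identity $\E_p(v)=(1/2-1/(p+1))\int|v|^{p+1}$ on $\mathcal{N}_p$ shows $\alpha_\pm\le 1$ and that the infimum is attained at $u$ itself.

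Next I would prove that the minimizer $u$ is a critical point of $\E_p$ on $H^1_0(\Omega)$. This is the technical heart of the argument. The idea is a deformation/quantitative deformation argument on $\mathcal{M}_p$: using that for every $v\in H^1_0$ with $v^\pm\not\equiv 0$ the map $(\alpha_+,\alpha_-)\in(0,\infty)^2\mapsto \alpha_+ v^++\alpha_- v^-\in\mathcal{M}_p$ is smooth with invertible derivative at $(1,1)$ when $v\in\mathcal{M}_p$, one shows that any pseudo-gradient flow direction with $\langle \E_p'(u),w\rangle<0$ can be projected back onto $\mathcal{M}_p$ while decreasing energy. If $\E_p'(u)\not\equiv 0$, this contradicts the minimality of $u$, so $\E_p'(u)=0$ and $u$ solves \eqref{pblP}.

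Finally, the shape. By construction $u$ has at least two nodal domains. Suppose it had three or more nodal components $\Omega_1,\Omega_2,\Omega_3$. Group them into a positive part $\tilde u^+$ and negative part $\tilde u^-$ formed from any two of them with opposite signs, rescale by the unique $(\beta_+,\beta_-)$ so that $\beta_+\tilde u^++\beta_-\tilde u^-\in\mathcal{M}_p$, and observe that the discarded piece carries strictly positive energy on $\mathcal{N}_p$, so the new element has strictly smaller energy — contradicting minimality. Hence exactly two nodal domains. For the Morse index: the two directions $u^+$ and $u^-$ have disjoint supports, are $H^1_0$--orthogonal, and satisfy $\E_p''(u)[u^\pm,u^\pm]=\int|\nabla u^\pm|^2-p\int|u^\pm|^{p+1}=(1-p)\int|\nabla u^\pm|^2<0$, so the Morse index is at least $2$. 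For the upper bound, one uses that $u$ is a minimizer on the codimension--$2$ manifold $\mathcal{M}_p$: any three--dimensional subspace on which $\E_p''(u)$ is negative definite would, after projection onto the tangent space of $\mathcal{M}_p$ (which kills at most two dimensions), contain a tangent direction along which one could strictly decrease $\E_p$ on $\mathcal{M}_p$, contradicting minimality. Therefore the Morse index equals $2$.

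The step I expect to be delicate is the recovery of the free critical point equation from a constrained minimum on $\mathcal{M}_p$, since $\mathcal{M}_p$ is not a smooth manifold globally and one must handle the degeneracy of $v\mapsto v^\pm$ carefully; this is where the invertibility of the Nehari--projection Jacobian at the minimizer does the work.
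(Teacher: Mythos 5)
The paper does not prove this theorem; it quotes it from \cite{ccn} and \cite{bartweth}, and your sketch reconstructs precisely the standard argument of those references: minimization of $\E_p$ on the nodal Nehari set $\mathcal{M}_p$, a deformation argument to recover the free critical point equation despite the non-smoothness of $\mathcal{M}_p$, an energy comparison to exclude a third nodal domain, and the two-negative-directions plus codimension-two argument for the Morse index. Your outline is correct, and you have rightly flagged the two genuinely delicate steps --- criticality of the constrained minimizer and the rigorous Morse index upper bound on the non-smooth set $\mathcal{M}_p$ --- as the places where the quantitative deformation machinery of \cite{bartweth} is actually needed.
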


To start with, we show that each family of least energy nodal
solutions for  Problem~\eqref{pblP} is a family of low energy
nodal solutions, i.e. satisfies condition ($A$) of the
introduction.  To this aim let us prove an upper bound and a
control on the energy.
\begin{Lem}
\label{estH+} Let $(u_p)_{p>1}$ be a family of least energy nodal
solutions of Problem~\eqref{pblP}.  For any $\varepsilon
>0$, there exists $p_\varepsilon$ such that, for any $p\geq
p_\varepsilon$,
\begin{equation*}
p\E_p(u_p)= p\big( \frac{1}{2}-\frac{1}{p+1}\big)
\int_{\Omega}\abs{\nabla u_p}^2 \leq 8\pi e + \varepsilon.
\end{equation*}
\end{Lem}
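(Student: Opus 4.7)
The plan is to exploit the minimality of $u_p$ among sign-changing functions in $\mathcal{M}_p$ by constructing an explicit competitor whose energy we can control, using the already-known asymptotics of ground state positive solutions due to Ren and Wei.

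The competitor is built from two disjoint positive ground states. Pick two disjoint open balls $B_1, B_2 \subset \Omega$, and for each $p$ let $w_{i,p}$ denote a positive ground state of problem \eqref{pblP} on $B_i$, extended by zero to $\Omega$. Define
\[
v_p := w_{1,p} - w_{2,p}.
\]
Since the supports are disjoint, $v_p^+ = w_{1,p}$ and $v_p^- = -w_{2,p}$, both nonzero. The Nehari condition \eqref{5} for $v_p$ on $\Omega$ reduces to the two ground-state identities $\int_{B_i} |\nabla w_{i,p}|^2 = \int_{B_i} w_{i,p}^{p+1}$, which hold by construction; hence $v_p \in \mathcal{M}_p$.

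By the least energy property of $u_p$, $\mathcal{E}_p(u_p) \le \mathcal{E}_p(v_p)$. Because the supports are disjoint and $|v_p^-|^{p+1} = w_{2,p}^{p+1}$,
\[
\mathcal{E}_p(v_p) = \mathcal{E}_p^{B_1}(w_{1,p}) + \mathcal{E}_p^{B_2}(w_{2,p}),
\]
where $\mathcal{E}_p^{B_i}$ denotes the Lane--Emden energy functional on $B_i$. The Ren--Wei asymptotic analysis~\cite{ren,rw} of positive ground states on a smooth bounded planar domain gives, on each ball $B_i$,
\[
p \int_{B_i} |\nabla w_{i,p}|^2 \longrightarrow 8\pi e, \qquad \text{hence} \qquad p\, \mathcal{E}_p^{B_i}(w_{i,p}) = p\Big(\tfrac{1}{2}-\tfrac{1}{p+1}\Big) \int_{B_i} |\nabla w_{i,p}|^2 \longrightarrow 4\pi e.
\]
Adding the two contributions yields $p\, \mathcal{E}_p(v_p) \to 8\pi e$, and the inequality $p\, \mathcal{E}_p(u_p) \le p\, \mathcal{E}_p(v_p)$ gives the claimed bound for $p$ large enough.

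The only delicate point is to ensure that the Ren--Wei asymptotics are available on arbitrary smooth bounded planar domains (so that they apply to each $B_i$), which is precisely the content of those papers; no further obstacle is expected. The equality $p\, \mathcal{E}_p = p\big(\tfrac{1}{2} - \tfrac{1}{p+1}\big)\int_\Omega|\nabla u_p|^2$ in the statement is immediate from $u_p \in \mathcal{M}_p \subset \mathcal{N}_p$ and hence needs no separate argument.
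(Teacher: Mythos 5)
Your proof is correct, and it reaches the bound by a genuinely shorter route than the paper, although the underlying strategy is the same: both arguments build a competitor in $\mathcal{M}_p$ consisting of a positive bump and a negative bump with disjoint supports and then invoke minimality of $u_p$ on $\mathcal{M}_p$. The difference is in how the competitor and its energy are handled. The paper constructs the bumps by hand as truncated Liouville profiles $\phi(x)\sqrt e\,(1+z((x-a)/\varepsilon_p)/p)$, computes $\int\abs{\nabla\Bar W_p}^2$ and $\int\abs{\Bar W_p}^{p+1}$ explicitly (each equal to $8\pi e/p+o(1/p)$), and must then project onto the nodal Nehari set via a factor $\alpha_p\to1$. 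You instead take the bumps to be actual ground states of \eqref{pblP} on two disjoint balls, so that $v_p$ lies \emph{exactly} in $\mathcal{M}_p$ with no projection step, and you import the energy asymptotics $p\int_{B_i}\abs{\nabla w_{i,p}}^2\to 8\pi e$ from Ren--Wei. That asymptotic is indeed established in \cite{ren,rw} for least-energy positive solutions on smooth bounded planar domains, so the citation carries the load legitimately; the trade-off is that the upper-bound half of the Ren--Wei asymptotics is itself proved by essentially the same test-function computation that the paper carries out, so your argument is less self-contained but cleaner. Your closing remark that the identity $\E_p(u_p)=\bigl(\tfrac12-\tfrac1{p+1}\bigr)\int_\Omega\abs{\nabla u_p}^2$ follows from $u_p\in\mathcal{N}_p$ is also correct.
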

\begin{proof}
Let $a,b \in \Omega$.  Let us
consider $0<r<1$ such that $B(a,r), B(b,r)\subset \Omega$ and
$B(a,r)\cap B(b,r)=\emptyset$.  Then, we define a cut-off function
$\phi :\Omega\to
[0,1]$ in $\C^\infty_0(\Omega)$ such that
\begin{equation*}
\phi(x):=\left\{
\begin{aligned}
& 1 &&\text{ if } \abs{x-a}<r/2, \\
&0 &&\text{ if } \abs{x-a}\geq r.
\end{aligned}
\right.
\end{equation*}

First we introduce the family of functions $\Bar{W}_p: \Omega \to
\IR$ which are defined on $B(a,r)$ as
\begin{equation*}
\Bar{W}_p(x):= \phi(x)\sqrt e \left( 1 +
  \frac{z\big(\frac{x-a}{\varepsilon_p} \big)}{p}\right)
\end{equation*}
where $z(x) = -2\log(1+ \frac{\abs{x}^2}{8})$ and
$\varepsilon_p^2:=\frac{1}{p\sqrt{e}^{p-1}}$.  The functions
$\Bar{W}_p$ vanishes outside the ball $B(a,r)$.  We claim that
$$\int_{\Omega}\abs{\Bar{W}_p}^{p+1}=\frac{8\pi e}{p}+ o(1/p),$$
$$\int_{\Omega}\abs{\nabla\Bar{W}_p}^2=\frac{8\pi e}{p}+ o(1/p).$$

Indeed, setting $\frac{x-a}{\varepsilon_p}=\psi$ and using the
fact that $\int_{\IR^2}e^z = 8\pi$,
\begin{equation*}
\begin{split}
\int_{\Omega}\abs{\Bar{W}_p}^{p+1} &=
(\sqrt{e})^{p+1}\varepsilon_p^2\int_{\frac{\Omega - a}{\varepsilon_p}}
\phi(\varepsilon \psi + a )^{p+1} \left( 1+
  \frac{z(\psi)}{p}\right)^{p+1}\intd \psi \\
&= \frac{e}{p}\left( \int_{\IR^2}e^z + o(1) \right)\\
&= \frac{8\pi e}{p} + o(1/p).
\end{split}
\end{equation*}
Concerning  $\int_{\Omega}\abs{\nabla  \Bar{W}_p}^2$, we get that
\begin{equation*}
\begin{split}
\int_{\Omega}\abs{\nabla  \Bar{W}_p}^2 =& \int_{\Omega}  \phi^2(x)
\left\vert\nabla \left(\sqrt e (1+ \frac{z((x-a)/\varepsilon_p)}{p})
    \right)\right\vert^2 + \int_{\Omega}  \abs{\nabla\phi(x)}^2
 \left(\sqrt e (1+ \frac{z((x-a)/\varepsilon_p)}{p})\right)^2\\
& + 2 \int_{\Omega} \phi(x)\sqrt e (1+
\frac{z((x-a)/\varepsilon_p)}{p}) \nabla\phi(x)\cdot \nabla \big(\sqrt
e (1+ \frac{z((x-a)/\varepsilon_p)}{p})\big) .
\end{split}
\end{equation*}
The first term gives
\begin{equation*}
\begin{split}
\int_{\Omega}  \phi^2(x)
\left\vert\nabla \left(\sqrt e (1+ \frac{z((x-a)/\varepsilon_p)}{p})
    \right)\right\vert^2 &=  \frac{e}{p^2} \int_{\Omega} 16
  \phi^2(x)\frac{\abs{x-a}^2}{ (8\varepsilon_p^2 + \abs{x-a}^2)^2} \\
&=   \frac{16e}{p^2} \left\{\int_{B(a,r/2)}
  \frac{\abs{x-a}^2}{ (8\varepsilon_p^2 +
    \abs{x-a}^2)^2}+ \int_{\Omega \setminus B(a,r/2)}
  \phi^2(x)\frac{\abs{x-a}^2}{ (8\varepsilon_p^2 +
    \abs{x-a}^2)^2}\right\}\\
&=  \frac{16e}{p^2}  (2\pi \int_{0}^{r/2} \frac{\psi^3}{(8\varepsilon_p^2 +
\psi^2)^2} + O(1)).
\end{split}
\end{equation*}
Setting $\psi^2 =t$ and integrating, we get
\begin{equation*}
\int_{0}^{r/2} \frac{\psi^3}{(8\varepsilon_p^2 +
\psi^2)^2} = \frac{1}{2}
\log\left\vert\frac{\frac{r}{2}+8\varepsilon_p^2}{8\varepsilon_p^2}\right\vert
+ \frac{1}{2} \left( \frac{8\varepsilon_p^2}{8\varepsilon_p^2 +\frac{r}{2}}-1
\right) = -\log \abs{\varepsilon_p} + O(1).
\end{equation*}
So, we get
\begin{equation*}
\begin{split}
\int_{\Omega}  \phi^2(x)
\left\vert\nabla \left(\sqrt e (1+ \frac{z((x-a)/\varepsilon_p)}{p})
    \right)\right\vert^2  &= -\frac{32\pi e}{p^2} \left(
  \log \varepsilon_p + O(1)\right) \\
&= \frac{-32e \pi}{p^2} \left(-\frac{p-1}{4} + o(p) + O(1)\right)\\
&= \frac{8\pi e}{p} + o(1/p).
\end{split}
\end{equation*}
The second term gives the existence of a constant $K>0$ such that
\begin{equation*}
\begin{split}
 \int_{\Omega}  \abs{\nabla\phi(x)}^2
 \left(\sqrt e (1+ \frac{z((x-a)/\varepsilon_p)}{p})\right)^2 &=
 \int_{B(a,r)\setminus B(a,r/2)}  \abs{\nabla\phi(x)}^2
 \left(\sqrt e (1+ \frac{z((x-a)/\varepsilon_p)}{p})\right)^2 \\
&\leq  K \frac{\big(1 + 2\max_{x\in\Omega\setminus B(a,1/p)}\left\vert\log \left(
    \frac{\abs{x-a}^2p}{8}\right)\right\vert  + K\big)^2}{p^2} \\
& = o(1/p).
\end{split}
\end{equation*}
The third term can be treated with similar techniques.
So, finally, we get
\begin{equation*}
\int_{\Omega}\abs{\nabla  \Bar{W}_p}^2 =  \frac{8\pi e}{p} + o(1/p)
\end{equation*}
which proves the claim.

Then, we define  the family of test functions $W_p: \Omega \to
\IR$ which are defined on $B(a,r)$ as $\Bar{W}_p$ and on $B(b,r)$ as the
odd reflection of $\Bar{W}_p$. The functions
$W_p$ vanishes outside the two balls $B(a,r)$ and $B(b,r)$.
So, $\norm{\nabla W_p^{\pm}}_2^2 = \frac{8\pi e}{p} + o(1/p)$ and
$\norm{W^{\pm}_p}_{p+1}^{p+1} = \frac{8\pi e}{p}+ o(1/p)$.
Clearly, the unique multiplicative factor $\alpha_p:=\alpha_p^+$ such that
$\alpha_p^+ W_p^+ \in \N_p$ equals the unique multiplicative factor
$\alpha_p^-$ such that $\alpha_p^- W_p^- \in \N_p$.   To characterize
it, we need to solve
\begin{equation*}
\alpha_p^2
\norm{\nabla W^{\pm}_p}_2^2 = \alpha_{p}^{p+1}\norm{W_p^\pm}_{p+1}^{p+1}.
\end{equation*}
It implies that
\begin{equation}
\label{eq2}
\alpha_p = \left( \frac{\int_{\Omega}\abs{\nabla
      W_p^\pm}^2}{\int_{\Omega}\abs{W_p^{\pm}}^{p+1}}\right)^{\frac{1}{p-1}}
\to 1.
\end{equation}

So, as $u_p$ is a minimum for the $H^1_0$-norm on $\mathcal{M}_p$ and
$\int_{\Omega} \abs{\nabla u_p}^2=\int_{\Omega} \abs{\nabla
  u_p^+}^2+\int_{\Omega} \abs{\nabla u_p^-}^2$,
we conclude that
\begin{equation*}
p\left( \frac{1}{2}-\frac{1}{p+1}\right)\norm{\nabla
u_{p}}_2^2 \leq p
\left(\frac{1}{2}-\frac{1}{p+1}\right)2 (\alpha_p)^2 \int_{\Omega}
\abs{\nabla{W^+_p}}^2.
\end{equation*}
As the right-hand side converges to $8\pi e$, we get the
assertion.
\end{proof}

\begin{Lem}
\label{estH-} Let $(u_{p})_{p>1}$ be a family of least energy
nodal solutions of Problem~\eqref{pblP}. For any $\varepsilon
>0$, there exists $p_\varepsilon$ such that, for any $p\geq
p_\varepsilon$,
\begin{equation*}
p\E_{p}(u_{p})= p\big( \frac{1}{2}-\frac{1}{p+1}\big)
\int_{\Omega}\abs{\nabla u_{p}}^2 \geq 8\pi e - \varepsilon.
\end{equation*}
\end{Lem}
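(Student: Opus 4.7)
\medskip

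The plan is to split the energy as $\E_p(u_p) = \E_p(u_p^+) + \E_p(u_p^-)$---which is immediate because $u_p^+$ and $u_p^-$ have disjoint supports---and to bound each summand from below by the ground state level of Problem~\eqref{pblP} on the whole of $\Omega$. The starting observation is that the characterization \eqref{5} of $\M_p$ already says that $u_p^+$ and $u_p^-$, viewed as elements of $H^1_0(\Omega)$, belong to the Nehari manifold $\N_p(\Omega)$: the identity $\int_{\Omega} \abs{\nabla u_p^\pm}^2 = \int_{\Omega} \abs{u_p^\pm}^{p+1}$ is exactly $\langle \intd \E_p(u_p^\pm), u_p^\pm\rangle = 0$.

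Denote by $c_p := \inf_{\N_p(\Omega)} \E_p$ the ground state level of~\eqref{pblP}. The previous step gives $\E_p(u_p^\pm) \geq c_p$, and adding the two inequalities yields
\begin{equation*}
p \E_p(u_p) = p \E_p(u_p^+) + p \E_p(u_p^-) \geq 2 p c_p.
\end{equation*}
I would then invoke the asymptotic result of X.~Ren and J.~Wei~\cite{ren,rw}, refined in~\cite{grossi,elgrossi}, which gives $p c_p \to 4\pi e$ as $p\to+\infty$. Indeed, for any positive ground state $w_p$ of~\eqref{pblP} one has $p\int_{\Omega}\abs{\nabla w_p}^2 \to 8\pi e$, and since $w_p\in\N_p(\Omega)$ the energy satisfies $c_p = \E_p(w_p) = (\tfrac{1}{2} - \tfrac{1}{p+1}) \int_{\Omega}\abs{\nabla w_p}^2$. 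Plugging this into the displayed inequality gives $p \E_p(u_p) \geq 8\pi e - \varepsilon$ for every $p$ large enough.

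The only non-trivial input here is the cited Ren--Wei asymptotic; the rest is an immediate consequence of~\eqref{5}. The conceptual point that could be confusing is that $u_p^\pm$ is not a priori a ground state on its own nodal domain $\Tilde{\Omega}_p^\pm$, so a naive attempt to invoke ground state asymptotics on the unknown, $p$-dependent domains $\Tilde{\Omega}_p^\pm$ would require a uniform domain-monotonicity argument. The trick is precisely to avoid this by working with $u_p^\pm$ as competitors in $\N_p(\Omega)$, where the asymptotic value $4\pi e$ of $p c_p$ is already available from~\cite{ren}. Combined with Lemma~\ref{estH+}, this lower bound proves Theorem~\ref{intro1}.
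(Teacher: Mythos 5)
Your proof is correct and follows essentially the same route as the paper: both arguments rest on the observation that \eqref{5} places $u_p^{\pm}$ in the Nehari manifold $\N_p$ of the whole domain $\Omega$, and on the Ren--Wei lower bound for the energy of elements of $\N_p$. The only cosmetic difference is that the paper unwinds the inequality $\norm{u}_{p+1}\le D_{p+1}(p+1)^{1/2}\norm{\nabla u}_{2}$ from \cite{ren} explicitly to obtain $p\int_{\Omega}\abs{\nabla u_p^{\pm}}^2\ge 8\pi e-o(1)$ directly, whereas you package the same estimate as the cited asymptotics $p\,c_p\to 4\pi e$ of the ground-state level.
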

\begin{proof}
To do this, we prove that for any sequence $p_n\rightarrow+\infty$
$\liminf_{n\to+\infty}p_n
\left(\frac{1}{2}-\frac{1}{p_n+1}\right)\int_{\Omega}\abs{\nabla
  u_{p_n}^\pm}^2\geq 4\pi e$. On one hand, $1 =
\frac{\int_{\Omega}(u_{p_n}^\pm)^{p_n+1}}{\int_{\Omega}\abs{\nabla{u_{p_n}^\pm}}^2}$.
On the other hand, in \cite{ren} (page 752), it is proved that, for any $t>1$,
$\norm{u}_{t}\leq D_t
t^{1/2}\norm{\nabla u}_{2}$ where $D_t\to (8\pi
e)^{-1/2}$ is
independent of $u$ in $H^1_0(\Omega)$.

So, we obtain
\begin{equation*}
1\leq D_{p_n+1}^{p_n+1} (p_n+1)^{\frac{p_n+1}{2}}
\left(\int_{\Omega}\abs{\nabla u_{p_n}^\pm}^2\right)^{\frac{p_n-1}{2}},
\end{equation*}
i.e.\ $\int_{\Omega}\abs{\nabla u_{p_n}^\pm}^2 \geq
D_{p_n+1}^{-2\frac{p_n+1}{p_n-1}}
(p_n+1)^{-\frac{p_n+1}{p_n-1}}$. Thus,
\begin{equation*}
\left(\frac{1}{2}-\frac{1}{p_n+1}\right)
(p_n+1)^{\frac{p_n+1}{p_n-1}}\int_{\Omega}\abs{\nabla u_{p_n}^\pm}^2
\geq \left( \frac{1}{2}-\frac{1}{p_n+1}\right) D_{p_n+1}^{-2\frac{p_n+1}{p_n-1}}.
\end{equation*}
As $\frac{p_n}{(p_n+1)^{\frac{p_n+1}{p_n-1}}}$ converges to $1$ and the
right-hand side converges to $4\pi e$, we get the assertion.
\end{proof}

\textit{Proof of Theorem~\ref{intro1}} : it follows from
Lemma~\ref{estH+} and
Lemma~\ref{estH-}.\begin{flushright}$\square$\end{flushright}

\begin{Rem}
\label{rem} The proof of Lemma~\ref{estH-} does not depend on the
fact that $u_{p_n}$ is a least energy nodal solution. Indeed, for
any $(u_p)_{p>1}$ verifying~\eqref{assum}, as $p\to +\infty$, we
get
\begin{itemize}
\item $p\big(\frac{1}{2}-\frac{1}{p}\big) \int_{\Omega}\abs{\nabla
    u_{p}^{\pm}}^2\to 4\pi e$, $p \int_{\Omega}\abs{\nabla
    u_{p}^{\pm}}^2\to 8\pi e$  and $p
\int_{\Omega}\abs{\nabla   u_{p}}^2\to 16\pi e$.
\item $\E_{p}(u_{p})\to 0$, $\int_{\Omega} \abs{\nabla
    u_{p}}^2\to 0$, $\int_{\Omega} \abs{\nabla
    u_{p}^-}^2\to 0$ and $\int_{\Omega} \abs{\nabla
    u_{p}^+}^2\to 0$.
\end{itemize}
Moreover the proof of Lemma~\ref{estH-} implies, as corollary,
that $u_{p}$ has  $2$ nodal domains for $p$ large.
\end{Rem}

From now on, throughout the paper, we consider a family
$(u_p)_{p>1}$ of nodal solutions for which~\eqref{assum} holds.
The following result shows an asymptotic lower bound for the
$L^{\infty}$-norms of $u_p^+$ and $u_p^-$. We denote by
$\lambda_1(D)$ the first eigenvalue of $-\Delta$ with Dirichlet
boundary conditions in a domain $D$ and by $x_p^{\pm}$ both the
maximum or the minimum point of $u_p$, as defined in the
introduction.
\begin{Prop}
\label{estinfl}
For any $p>1$ we have that
$\abs{u_p(x_p^\pm)}\ge\lambda_1^{\frac{1}{p-1}}$ where
$\lambda_1:=\lambda_1(\Omega)$.
\end{Prop}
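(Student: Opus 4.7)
The plan is to work separately on each nodal domain and compare the equation with the linear eigenvalue problem, exploiting domain monotonicity of the first Dirichlet eigenvalue.

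First, fix the sign and consider $\tilde{\Omega}_p^+=\{x\in\Omega:u_p(x)>0\}$. On this (open) set, $u_p^+$ is positive and satisfies
\[
-\Delta u_p^+=(u_p^+)^p\quad\text{in }\tilde{\Omega}_p^+,\qquad u_p^+=0\quad\text{on }\partial\tilde{\Omega}_p^+,
\]
since the boundary of $\tilde{\Omega}_p^+$ is contained in $\partial\Omega\cup NL_p$. Let $\phi_1^+>0$ denote a first Dirichlet eigenfunction of $-\Delta$ on $\tilde{\Omega}_p^+$, with eigenvalue $\lambda_1(\tilde{\Omega}_p^+)>0$.

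Next, I would multiply the equation for $u_p^+$ by $\phi_1^+$, integrate over $\tilde{\Omega}_p^+$, and integrate by parts twice (both functions vanish on the boundary of the nodal domain) to obtain
\[
\lambda_1(\tilde{\Omega}_p^+)\int_{\tilde{\Omega}_p^+} u_p^+\,\phi_1^+=\int_{\tilde{\Omega}_p^+}(u_p^+)^p\,\phi_1^+.
\]
Bounding $(u_p^+)^p\le \norm{u_p^+}_\infty^{p-1}\,u_p^+$ pointwise and dividing by the strictly positive integral $\int_{\tilde{\Omega}_p^+}u_p^+\phi_1^+$ yields
\[
\lambda_1(\tilde{\Omega}_p^+)\le \norm{u_p^+}_\infty^{p-1}=u_p(x_p^+)^{p-1}.
\]

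Finally, since $\tilde{\Omega}_p^+\subset\Omega$, the strict monotonicity of the first Dirichlet eigenvalue with respect to domain inclusion gives $\lambda_1(\tilde{\Omega}_p^+)\ge \lambda_1(\Omega)=\lambda_1$. Combining the two inequalities and taking the $(p-1)$-th root produces the claimed bound $u_p(x_p^+)\ge \lambda_1^{1/(p-1)}$. The same argument applied on $\tilde{\Omega}_p^-$ to $-u_p^-$ yields $\abs{u_p(x_p^-)}\ge \lambda_1^{1/(p-1)}$.

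There is essentially no serious obstacle: the only mild technical point is making sense of $\phi_1^+$ and of the integrations by parts when $\partial\tilde{\Omega}_p^+$ is only known to be made up of a smooth piece of $\partial\Omega$ plus the nodal line. This can be handled either by invoking elliptic regularity of $u_p$ (so $NL_p$ is a disjoint union of smooth curves away from its singular points, a negligible set) or, more robustly, by using the variational characterization of $\lambda_1(\tilde{\Omega}_p^+)$ with $u_p^+$ itself as admissible test function: $\int|\nabla u_p^+|^2 \ge \lambda_1(\tilde{\Omega}_p^+)\int (u_p^+)^2$, which combined with $\int|\nabla u_p^+|^2=\int(u_p^+)^{p+1}\le \norm{u_p^+}_\infty^{p-1}\int (u_p^+)^2$ yields the same conclusion without touching the boundary of the nodal domain.
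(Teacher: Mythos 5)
Your proposal is correct and, in substance, the same as the paper's: the paper establishes $\lambda_1(\Tilde{\Omega}_p^{\pm})\le \abs{u_p(x_p^{\pm})}^{p-1}$ exactly via the Nehari identity $\int\abs{\nabla u_p^{\pm}}^2=\int\abs{u_p^{\pm}}^{p+1}$ combined with the Poincar\'e inequality on the nodal domain, which is precisely the ``more robust'' variant you give in your last paragraph, and then concludes by domain monotonicity of $\lambda_1$. Your primary route (testing the equation against the first eigenfunction of the nodal domain) is only a cosmetic variation, and your own remark correctly identifies that the variational version avoids any regularity issues at the nodal line.
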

\begin{proof}
Using Poincar\'e's inequality, we get
\begin{equation*}
\begin{split}
1 = \frac{\int_{\Omega}\abs{u_p^{\pm}}^{p+1}}{\int_{\Omega}\abs{\nabla
  u_{p}^{\pm}}^2} & \leq
\frac{\abs{u_p(x_p^{\pm})}^{p-1}\int_{\Omega}(u_p^\pm)^2}{\int_{\Omega}\abs{\nabla
u_{p}^{\pm}}^2} \\
&\leq \abs{u_p(x_p^\pm)}^{p-1} \lambda_{1}^{-1}(\Tilde{\Omega}_{p}^{\pm}),
\end{split}
\end{equation*}
where $\Tilde{\Omega}_{p}^{\pm}$ are the nodal domains of $u_p$.
As $\Tilde{\Omega}_{p}^{\pm}\subset \Omega$, we have
$\lambda_1(\Tilde{\Omega}_{p}^{\pm})\geq \lambda_1$ which ends the
proof.
\end{proof}
\begin{Rem}
\label{rem2}
We have
\begin{itemize}
\item For any $\varepsilon>0$, $\abs{u_p(x_p^{\pm})}\geq 1-\varepsilon$ for
$p$ large. In particular this holds for $\norm{u_p}_{L^{+\infty}}$.
\item By Remark~\ref{rem}, as
  $\abs{u_p(x_{p}^\pm)}^{p-1}$ is bounded from below,
  $\frac{\abs{u_p(x_p^{\pm})}^{p-1}}{\int_{\Omega}\abs{\nabla
        u_p^\pm}^2}$ and
    $\frac{\abs{u_p(x_p^{\pm})}^{p-1}}{\int_{\Omega}\abs{\nabla
        u_p}^2}$ converge to $+\infty$ when $p\to +\infty$.
\end{itemize}
\end{Rem}

The next result  gives a direct argument to prove that the
$L^\infty$-norms of $u_p^+$ and $u_p^-$ are bounded. It will be
improved in the next sections.

\begin{Prop}
\label{linfb} We have that $u_{p}(x_{p}^{\pm})$ is bounded as
$p\to +\infty$.
\end{Prop}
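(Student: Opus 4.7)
The plan is to argue by contradiction: suppose that along some subsequence $M_p := u_p(x_p^+) \to +\infty$ (the case $|u_p(x_p^-)|\to+\infty$ is entirely analogous). Set $\varepsilon_p^2 := (pM_p^{p-1})^{-1}$, which by Remark~\ref{rem2} tends to zero as $p\to+\infty$, and consider the scaling
\[
z_p(y) := \frac{p}{M_p}\bigl(u_p(x_p^+ + \varepsilon_p y) - M_p\bigr),
\]
which satisfies $z_p(0)=0$, $z_p \le 0$, and $-\Delta z_p = (1+z_p/p)^p$ wherever $u_p(x_p^+ + \varepsilon_p y) \ge 0$; there, $1+z_p/p = u_p/M_p \in [0,1]$, so the right-hand side lies in $[0,1]$.

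I first check that the rescaling is well defined on arbitrarily large balls, i.e.\ that for every fixed $R>0$, $B(x_p^+, R\varepsilon_p)\subset\tilde\Omega_p^+\cap\Omega$ for $p$ large. The standard interior gradient estimate for Poisson's equation applied to $-\Delta u_p = |u_p|^{p-1}u_p$ on balls of radius $r = M_p^{-(p-1)/2} = \sqrt{p}\,\varepsilon_p$ (using $\norm{u_p}_\infty = M_p$ and $r^2\norm{\Delta u_p}_\infty \le M_p$) gives $\norm{\nabla u_p}_\infty \le CM_p^{(p+1)/2}$, up to $\partial\Omega$ thanks to the Dirichlet condition and a boundary Schauder estimate. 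Since $u_p$ vanishes on $\partial\tilde\Omega_p^+\supseteq NL_p\cup\partial\Omega$, the mean value theorem forces $d(x_p^+,\partial\tilde\Omega_p^+)\ge M_p/\norm{\nabla u_p}_\infty \ge c\sqrt{p}\,\varepsilon_p$, so $d(x_p^+,\partial\tilde\Omega_p^+)/\varepsilon_p \to +\infty$.

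Next I establish a local uniform bound on $z_p$. On $B(0,R)$, $-\Delta z_p = g_p$ with $g_p\in[0,1]$; write $z_p = w_p + h_p$, where $w_p(y) := \int_{B(0,R)} G_R(y,y')g_p(y')\,dy'$ is the Dirichlet Newton potential (so $0\le w_p\le C(R)$) and $h_p$ is the harmonic extension of the nonpositive boundary data. The maximum principle gives $h_p\le 0$ and $-h_p(0) = w_p(0) \le C(R)$, so by Harnack applied to the nonnegative harmonic function $-h_p$, $|h_p|\le C'(R)$ on $B(0,R/2)$; hence $|z_p|\le C''(R)$ there. Standard $C^{1,\alpha}$ elliptic regularity, Arzel\`a-Ascoli, and a diagonal argument then produce a subsequence with $z_p\to z$ in $C^1_{loc}(\IR^2)$, for some continuous $z$ with $z(0)=0$ and $z\le 0$.

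For the contradiction, the change of variables $x = x_p^+ + \varepsilon_p y$ gives
\[
p\int_{B(x_p^+, R\varepsilon_p)} |u_p|^{p+1}\,dx = M_p^2 \int_{B(0,R)}(1+z_p/p)^{p+1}\,dy.
\]
Since $z$ is continuous with $z(0)=0$, $z\ge -1/2$ on some ball $B(0,\delta)$; by uniform convergence, $\int_{B(0,R)}(1+z_p/p)^{p+1}\,dy \to \int_{B(0,R)}e^z\,dy \ge \pi\delta^2 e^{-1/2}>0$ for any $R\ge\delta$. On the other hand, assumption \eqref{assum} together with Remark~\ref{rem} gives $p\int_\Omega |u_p|^{p+1}= p\norm{\nabla u_p}_2^2\to 16\pi e$, so the left-hand side is uniformly bounded in $p$. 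Hence $M_p^2 \le C$, contradicting $M_p \to +\infty$. The hardest point is the local uniform bound on $z_p$ in the third paragraph: it crucially uses the sign $z_p\le 0$ together with the Green/Harnack decomposition, and in turn requires the gradient estimate of the second paragraph to ensure that the full ball $B(0,R)$ lies inside the positive region where the rescaled equation $-\Delta z_p = (1+z_p/p)^p$ is valid.
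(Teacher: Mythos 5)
Your proof is correct, but it takes a genuinely different route from the paper. The paper's argument is a short, direct integral estimate: it writes $u_p(x_p^+)=\int_\Omega G(x_p^+,y)\abs{u_p}^{p-1}u_p\,dy$, uses $\abs{G(x,y)}\le C\abs{\log\abs{x-y}}$ and H\"older with exponents $p+1$ and $\tfrac{p+1}{p}$, and then bounds $\int_\Omega\abs{\log\abs{x_p^+-y}}^{p+1}dy\le C(p+1)^{p+2}$ by iterated integration by parts; combined with $p\int_\Omega\abs{u_p}^{p+1}\to16\pi e$ this gives the bound with no contradiction argument and no blow-up. You instead argue by contradiction via a rescaling: the key extra ingredient you need (and supply) is that $B(x_p^+,R\varepsilon_p)$ sits inside the positive nodal domain, which you obtain from the scaled interior/boundary gradient estimate $\norm{\nabla u_p}_\infty\le CM_p^{(p+1)/2}$ at scale $r=\sqrt p\,\varepsilon_p$ plus the mean value theorem; this is correct (the true gradient near the peak is of order $p^{-1/2}M_p^{(p+1)/2}$, so your bound is safe) and in fact yields, as a by-product, a quantitative form of Propositions~\ref{away...} and \ref{controlNL} with rate $\sqrt p$, independently of any $L^\infty$ bound. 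Your Green/Harnack decomposition of $z_p$ is exactly the device the paper itself uses later in Proposition~\ref{away...} and Theorem~\ref{intro2}, and the final energy comparison $M_p^2\int_{B(0,R)}\abs{1+z_p/p}^{p+1}\le p\int_\Omega\abs{u_p}^{p+1}\le C$ closes the argument. The trade-off: the paper's proof is elementary and self-contained within Section~2, while yours front-loads the Section~3 machinery; the only step you should spell out with care is the boundary version of the scaled gradient estimate (flattening $\partial\Omega$ at scale $r\to0$ and invoking a uniform boundary Schauder or $W^{2,q}$ estimate), which is standard for smooth domains but is doing real work in guaranteeing $d(x_p^+,\partial\tilde\Omega_p^+)\ge c\sqrt p\,\varepsilon_p$.
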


\begin{proof}
Let us make the proof for the positive case. By
Proposition~\ref{estinfl}, we only have to prove  that
$u_{p}(x_{p}^{+})$ is bounded from above. Let us denote by $G$ the
Green's function on $\Omega$. As $\abs{G(x,y)}\leq C\abs{\log
\abs{x-y}}$ for any $x,y\in\Omega$ and some independent constant
$C>0$, using the Hölder inequality we have
\begin{equation*}
\begin{split}
u_{p}(x_{p}^+) &= \int_{\Omega}G(x_{p}^+,y)
\abs{u_{p}(y)^{p-1}}u_{p}(y)\intd y\\
&\leq C\int_{\Omega}\left\vert\log \abs{x_{p}^+ -y}\right\vert
\abs{u_{p}(y)^{p}}\intd y\\
&\leq C \left( \int_{\Omega}\left\vert\log\abs{
    x_{p}^+-y}\right\vert^{p+1}\intd y\right)^{\frac{1}{p+1}}
\left( \int_{\Omega}\abs{u_{p}}^{p+1}\right)^{\frac{p}{p+1}}.
\end{split}
\end{equation*}
Since $p \int_{\Omega}\abs{u_{p}}^{p+1}\to 16\pi e$ as
$p\rightarrow+\infty$ (see Remark~\ref{rem}), it is enough to show
the existence of a constant $C>0$ such that
\begin{equation*}
\int_{\Omega}\abs{\log \abs{x_{p}^+-y}}^{p+1}\intd y\leq C
(p+1)^{p+2}.
\end{equation*}
Let us consider $R>0$ such that $\Omega \subset B(x_{p},R)$ for
all $n$. Then there exists a constant $K>0$ such that
\begin{equation*}
\int_{\Omega}\abs{\log \abs{x_{p}^+-y}}^{p+1}\intd y\leq
\int_{B(x_{p},R)}\abs{\log \abs{x_{p}^+-y}}^{p+1}\intd y = K
\int_{0}^R \abs{\log r}^{p+1}r\intd r.
\end{equation*}
Integrating $([p]+1)$-times by parts, we get
\begin{equation*}
\begin{split}
\int_{\Omega}\abs{\log \abs{x_{p}^+-y}}^{p+1}\intd y&\leq
K\big\{\abs{\log(R)}^{p+1} + (p+1)\abs{\log(R)}^{p}+\cdots+
(p+1)\cdots(p-[p]+ 2)\\ & \abs{\log(R)}^{p -[p]+1}\big\} + K
(p+1)p\cdots (p- [p]+1) \int_{0}^R \abs{\log r}^{p -[p]} r\intd r.
\end{split}
\end{equation*}
Thus,  there exists $C$ such that for large $n$
\begin{equation*}
\int_{\Omega}\abs{\log \abs{x_{p}^+-y}}^{p+1}\intd y\leq C
(p+1)^{p+2},
\end{equation*}
which ends the proof.
\end{proof}

\section{Asymptotic behavior}

For the rest of the paper, w.l.o.g., let us assume that
$\norm{u_{p}}_{\infty}= u_p(x_p^+)$ for any $p>1$.

In this section we use several rescaling arguments to characterize the
asymptotic behavior of $u_p^\pm$.

Let us define $\varepsilon_p^2 :=\frac{1}{p
  u_p(x_p^+)^{p-1}}\to 0$ by Remark~\ref{rem2}.

\subsection{Control close to the boundary}

We prove that $x_p^+$  cannot go to the boundary of $\Omega$ too fast.

\begin{Prop}
\label{away...}
We have
\begin{equation}
\label{eq10} \frac{d(x_p^{+}, \partial
  \Omega)}{\varepsilon_p}\to +\infty
\end{equation}
as  $p\to +\infty$.
\end{Prop}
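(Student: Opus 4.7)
The plan is to argue by contradiction. Suppose that along some subsequence $p_n\to+\infty$ we have $d(x_{p_n}^+,\partial\Omega)/\varepsilon_{p_n}\le M<+\infty$. Let $y_n\in\partial\Omega$ be a nearest boundary point to $x_{p_n}^+$, and (after a translation and rotation) assume $y_n=0$ and the inward unit normal to $\partial\Omega$ at $y_n$ equals $e_2$. The natural rescaling to consider is
\[
\tilde u_n(y):=\frac{u_{p_n}(\varepsilon_{p_n}y)}{u_{p_n}(x_{p_n}^+)}
\qquad\text{on }\Omega_n:=\Omega/\varepsilon_{p_n}.
\]
Then $\|\tilde u_n\|_\infty=1$, achieved at $y_n^*:=x_{p_n}^+/\varepsilon_{p_n}$, and by hypothesis $|y_n^*|\le M$. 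Moreover $\tilde u_n=0$ on $\partial\Omega_n$, and a direct computation using $\varepsilon_{p_n}^2 u_{p_n}(x_{p_n}^+)^{p_n-1}=1/p_n$ shows that
\[
-\Delta\tilde u_n=\frac{1}{p_n}\,|\tilde u_n|^{p_n-1}\tilde u_n,
\]
so in particular $\|\Delta\tilde u_n\|_{L^\infty(\Omega_n)}\le 1/p_n\to 0$.

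Next I would let $n\to+\infty$. Since $\partial\Omega$ is smooth, the rescaled boundary $\partial\Omega_n$ converges locally (in $C^{1,\alpha}$) to $\partial\mathbb{H}=\{y_2=0\}$, and the sets $\Omega_n$ exhaust the half-plane $\mathbb{H}:=\{y_2>0\}$. Since the source term in the equation for $\tilde u_n$ is uniformly bounded (by $1/p_n$), standard elliptic regularity up to the boundary (applied after a local $C^{2,\alpha}$ flattening of $\partial\Omega_n$) yields a uniform bound on $\|\tilde u_n\|_{C^{1,\alpha}}$ on compact subsets of $\overline{\mathbb{H}}$. Extracting a subsequence, $\tilde u_n\to\tilde u$ uniformly on compact subsets of $\overline{\mathbb{H}}$, where $\tilde u$ is harmonic in $\mathbb{H}$, vanishes on $\partial\mathbb{H}$, and satisfies $|\tilde u|\le 1$.

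By Liouville's theorem for the half-plane (or by Phragmén--Lindelöf, exploiting that any bounded harmonic function on $\mathbb{H}$ with zero boundary values must be identically zero), $\tilde u\equiv 0$. On the other hand, the points $y_n^*$ remain in the compact set $\overline{B(0,M)}\cap\overline{\mathbb{H}}$, so, up to a further subsequence, $y_n^*\to y^*$, and the uniform convergence of $\tilde u_n$ to $\tilde u$ on this compact set forces
\[
1=\lim_{n\to+\infty}\tilde u_n(y_n^*)=\tilde u(y^*)=0,
\]
a contradiction. This proves \eqref{eq10}.

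The main obstacle is the elliptic regularity of $\tilde u_n$ \emph{up to the boundary} in the presence of a moving domain $\Omega_n$. This can be resolved by writing the equation in a local system of coordinates flattening $\partial\Omega$ near $y_n$; because $\partial\Omega$ is $C^\infty$ and the rescaling dilates distances by $1/\varepsilon_{p_n}$, the flattened boundary of $\Omega_n$ tends to a flat hyperplane with uniformly controlled $C^{2,\alpha}$ bounds, and the transformed Laplacian has smooth coefficients converging to the Euclidean one. Standard Schauder estimates then furnish the required $C^{1,\alpha}_{\mathrm{loc}}(\overline{\mathbb{H}})$ bounds.
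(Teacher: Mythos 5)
Your proof is correct, but it takes a genuinely different route from the paper's. The paper argues by contradiction too, but it works with the logarithmic rescaling $z^*_{p_n}(x)=\frac{p_n}{u_{p_n}(x^+_{p_n})}\bigl(u^*_{p_n}(\varepsilon_{p_n}x+x^+_{p_n})-u^*_{p_n}(x^+_{p_n})\bigr)$ of an \emph{odd reflection} $u^*_{p_n}$ of $u_{p_n}$ across $\partial\Omega$ near the limit point $x_*$ (the curved-boundary case being reduced to the flat one by a $C^1$ change of variables and a cut-off); this makes $z^*_{p_n}$ solve $-\Delta z^* =\bigl|1+\tfrac{z^*}{p}\bigr|^{p-1}\bigl(1+\tfrac{z^*}{p}\bigr)$ on full balls $B(0,R)$, and a decomposition $z^*=w+\psi$ with $w$ bounded by the maximum principle and $\psi$ harmonic, bounded above, controlled via Harnack's inequality and $\psi(0)\ge -C$, yields local uniform bounds on $z^*_{p_n}$ --- contradicting $z^*_{p_n}(y_n)=-p_n$ at the rescaled boundary points $y_n\in B[0,l+1]$. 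You instead rescale $u_{p}/\|u_{p}\|_\infty$ about the nearest boundary point, observe that $\|\Delta\tilde u_n\|_\infty\le 1/p_n\to 0$ so the limit problem is simply Laplace's equation on the half-plane with zero boundary data, and conclude by Schwarz reflection plus Liouville. Your route is arguably more elementary for this particular statement: it avoids both the odd-reflection/change-of-variables construction and the Harnack dichotomy, at the cost of needing boundary Schauder (or $L^q$) estimates on the expanding domains $\Omega_n$, which you correctly address via a local flattening with uniformly controlled coefficients. What the paper's heavier setup buys is reusability: the reflected, logarithmically rescaled function and the $w+\psi$ Harnack argument are exactly the tools redeployed in the proofs of Theorems 2 and 4 (and of Proposition 3.6), where the limit equation is the Liouville equation $-\Delta z=e^z$ rather than the degenerate one, so your shortcut would not replace those later arguments. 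One small point worth making explicit in your last step: when $y^*\in\partial\mathbb{H}$ the conclusion $\tilde u_n(y_n^*)\to 0$ uses the uniform gradient bound together with $d(y_n^*,\partial\Omega_n)\to 0$ (or equicontinuity up to the boundary), since $y_n^*$ then leaves every compact subset of the open half-plane; as written this is implicit in your $C^{1,\alpha}_{\mathrm{loc}}(\overline{\mathbb{H}})$ bounds, but it deserves a sentence.
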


\begin{proof}
Let us argue by contradiction and assume that, for a sequence
$p_n\rightarrow+\infty$, $\frac{d(x_{p_n}^{+},
\partial  \Omega)}{\varepsilon_{p_n}}\to l\geq 0$ and that
$x_{p_n}^+\to x_* \in\partial \Omega$ (i.e.\ $\frac{d(x_{p_n}^{+},
x_*)}{\varepsilon_{p_n}}\to l$).

First, we treat the case when $\partial\Omega$ is flat
around $x_*$. We consider a semi-ball $D$ centered in $x_*$ with
radius $R$ such that $D\subset \Omega$ and the diameter of $D$
belongs to  $\partial\Omega$.  For large $n$, let us remark that
$x_{p_n}^+$ belongs to $D$.  Then, on $A:=B(x_*,R)$, we
consider the function $u_{p_n}^*$ which is defined as $u_{p_n}$ on $D$
and as the odd reflection of $u_{p_n}$
on $A\setminus D$.  It is a solution of $-\Delta u = \abs{u}^{p_n-1}
u$ on $A$. For large $n$, we consider
\begin{equation}
\label{eq11}
z_{p_n}^*(x) :=\frac{p_n}{u_{p_n}^*(x_{p_n}^+)}(u_{p_n}^*(\varepsilon_{p_n}x +
x_{p_n}^+)- u_{p_n}^*(x_{p_n}^+))
\end{equation}
on $\Omega^*_{p_n}:= \frac{A-x_{p_n}^+}{\varepsilon_{p_n}}\to
\IR^2$. On $\Omega^*_{p_n}$, we get from~\eqref{eq11}

\begin{equation*}
\left\{
\begin{aligned}
-\Delta z_{p_n}^*&= \left\vert
  1+\frac{z_{p_n}^*}{p_n}\right\vert^{p_n-1}\left(1+\frac{z_{p_n}^*}{p_n}\right),
& \\
&\left\vert 1+\frac{z_{p_n}^*}{p_n}\right\vert\leq 1.&
\end{aligned}
\right.
\end{equation*}

Let us fix $R>0$. For 
large $n$, $B(0, R)\subset \Omega^*_{p_n}$ and
we consider the problem
\begin{equation*}
\left\{
\begin{aligned}
-\Delta w_{p_n}&= \left\vert
  1+\frac{z_{p_n}^*}{p_n}\right\vert^{p_n-1}\bigl(1+\frac{z_{p_n}^*}{p_n}\bigr),&&\text{  in } B(0,R), \\
w_{p_n}&=0,&&\text{ on } \partial B(0,R).
\end{aligned}
\right.
\end{equation*}

Since, by~\eqref{eq11},  $\left\vert 1+\frac{z_{p_n}^*}{p_n}\right\vert\leq 1$,
we have that $\abs{w_{p_n}}$ is uniformly bounded by a constant $C$
independent of
$n$ by the maximum principle and the regularity theory. Moreover, because
$z_{p_n}\leq 0$, we have that $\psi_{p_n}= z_{p_n}-w_{p_n}$ is an harmonic function
which is uniformly bounded above.  By Harnack's inequality,
$\psi_{p_n}$ is bounded in $L^{\infty}(B(0, R))$ or
tends to $-\infty$ on each compact set of $B(0,R)$. As $\psi_{p_n}(0)=
z_{p_n}(0)-w_{p_n}(0)\geq -C$, we get that $\psi_{p_n}$ and $z_{p_n}$
are uniformly bounded on each compact set of $B(0,R)$.

Since we are assuming that
$\frac{d(x_{p_n},x_*)}{\varepsilon_{p_n}}\to l$ we get that
$y_n:=\frac{x_* - x_{p_n}^+}{\varepsilon_{p_n}}\in B[0, l+1]$ for large $n$
and $z_{p_n}(y_n) =-p_n \to -\infty$ which is a contradiction.

Next, we treat the case when $\partial \Omega$ is not
locally flat around $x_*$ but is a $\C^1$-curve.
We  consider a
$\C^1$-domain $D$ which is the intersection of  a fixed
neighborhood of $x_*$ and $\Omega$. Let us define the
square $Q:=(-1,1)^2$, $Q^+:=(-1,1)\times (0,1)\subset Q$ and $S:=
(-1,1)\times \{0\}$.

 We consider the change of
variables $\phi : D \to Q^+$ and
$\phi(D\cap \partial\Omega)=S $ (see~\cite{brezis} to get that $\phi$
is well-defined and can be assumed to be
$\C^1(\Bar{D})$). Moreover $\phi^-1 \in \C^1(\Bar{Q^+})$.

We fix a positive function $\theta\in C^2$ such that $\theta\circ
\phi^{-1} : \Bar{Q^+}\to\IR$ equals $0$ on $\partial Q^+ \setminus S$ and
$\partial_\nu\theta\circ \phi^{-1} =0$ on $S$ where $\partial_\nu$
denotes the normal derivative.
We extend $\theta\circ \phi^{-1}$ on $Q$ by
even symmetry with respect to $S$.

On $Q$, we define $\Tilde{u}_{p_n}$ as $\theta(\phi^{-1}(\cdot))
u_{p_n}(\phi^{-1}(\cdot))$ on $Q^+$ and the odd
symmetric function on $Q\setminus Q^+$ . Since $\theta u_{p_n}$ solves
\begin{equation}
\label{eq111}
-\Delta u = \theta  \abs{u_{p_n}}^{p_n-1}u_{p_n} -2
\nabla \theta \nabla u_{p_n} - (\Delta \theta)u_{p_n} =: g_{p_n}
\end{equation}
 with Dirichlet
boundary conditions on $D$, by the change of variables $y = \phi(x)$, we get
that  $\Tilde{u}_{p_n}$ solves  for some matrix $A_{p_n}$
\begin{equation*}
-div (A_{p_n}\nabla u) = h_{p_n}
\end{equation*}
with Dirichlet boundary conditions on
$Q$  and where $h_{p_n}$ is $g_{p_n}\circ \phi^{-1}$ on $Q^+$ and the
antisymmetric on $Q\setminus Q^+$.
Coming back to $\Omega$ by the change of variables $x=
\phi^{-1}(y)$ we get that $\theta u^*_{p_n}=
\Tilde{u}_{p_n}(\phi(\cdot))$ solves $-\Delta u = h_{p_n}\circ \phi$ on
$A:= \phi^{-1}(Q)$.

As $\theta$ is positive, it implies that
$u^*_{p_n}$ solves $-\Delta u =  \abs{u}^{p_n-1}u$ on $A$.

We conclude by working in the same way as in the first case.

\end{proof}

\subsection{Rescaling argument in $\Omega$ around $x_p^+$: limit
  equation in $\IR^2$}

The idea is inspired by~\cite{grossi}.  Let us consider
$\Omega^{+}(\varepsilon_p):= \frac{\Omega - x_p^+}{\varepsilon_p}$ and
$z_{p}:\Omega^{+}(\varepsilon_p)\to \IR$ the scaling of $u_{p}$ around
$x_{p}^+$:
\begin{equation}
\label{eq13}
z_p(x):=\frac{p}{u_p(x_p^+)}(u_p(\varepsilon_px+ x_p^+)-u_p(x_p^+)).
\end{equation}

\textit{Proof of Theorem~\ref{intro2}} : Let $p_n$ be a sequence,
$p_n\to +\infty$. As in the previous proof, we have that $z_{p_n}$ solves
the equation
\begin{equation*}
\left\{
\begin{aligned}
-\Delta z_{p_n}&= \left\vert
  1+\frac{z_{p_n}}{p_n}\right\vert^{p_n-1}\left(1+\frac{z_{p_n}}{p_n}\right),
&\text{  in } \Omega^+ (\varepsilon_{p_n}),  \\
&\left\vert 1+\frac{z_{p_n}}{p_n}\right\vert\leq 1& \\
z_{p_n}&=-p_n,&\text{ on } \partial \Omega^+ (\varepsilon_{p_n}).
\end{aligned}
\right.
\end{equation*}

Let us fix $R>0$. By Proposition~\ref{away...}, we know that
$\frac{d(x_{p_n}^{+}, \partial
  \Omega)}{\varepsilon_{p_n}}\to +\infty$. So, $\Omega^+ (\varepsilon_{p_n})$
``converges'' to $\IR^{2}$ as $p_n\to +\infty$, i.e.\  $B(0,R)\subset
\Omega^+(\varepsilon_{p_n})$ for large $n$.  Let us consider the problem
\begin{equation*}
\left\{
\begin{aligned}
-\Delta w_{p_n}&= \left\vert
  1+\frac{z_{p_n}}{p_n}\right\vert^{p_n-1}\bigl(1+\frac{z_{p_n}}{p_n}\bigr),&&\text{  in } B(0,R), \\
w_{p_n}&=0,&&\text{ on } \partial B(0,R).
\end{aligned}
\right.
\end{equation*}
Since, by~\eqref{eq13}, $\left\vert
  1+\frac{z_{p_n}}{p_n}\right\vert\leq 1$, we get  that
$\abs{w_{p_n}}\leq C$ independent of $n$. By arguing as before, we get
that $\psi_{p_n}$ and $z_{p_n}$ are bounded up to a subsequence in
$L^{\infty}(B(0,R))$ for any $R$.

Thus, by the standard regularity theory, $z_{p_n}$ is bounded in
$C^2_{loc}(\IR^2)$ and, on
each ball, $1+\frac{z_{p_n}}{p_n}>0$ for large $n$.   We have that
$z_{p_n}\to z$ in $C^{2}_{\text{loc}}(\IR^2)$ and $-\Delta z = e^z$.

To finish, we prove that $\int_{\IR^2}e^z< +\infty$.  We have that $z_{p_n}+p_n
\bigl(\log\bigl\vert
1+\frac{z_{p_n}}{p_n}\bigr\vert-\frac{z_{p_n}}{p_n})$ converges
pointwisely to $z$ in $\IR^2$. By Fatou's lemma, we deduce
\begin{equation*}
\begin{split}
\int_{\IR^2} e^z &\leq \lim_{n}\int_{\Omega^+
  (\varepsilon_{p_n})}e^{z_{p_n}+p_n\left(\log\left\vert
      1+\frac{z_{p_n}}{p_n}\right\vert-\frac{z_{p_n}}{p_n}\right)}\\
&= \lim_n \int_{\Omega^+ (\varepsilon_{p_n})} \left\vert
  1+\frac{z_{p_n}}{p_n}\right\vert^{p_n}\\
&\leq \lim_{n}\int_{\Omega}\frac{\abs{u_{p_n}}^{p_n}}{\varepsilon_{p_n}^2
  \abs{u_{p_n}(x_{p_n}^+)}^{p_n}}\\
&=\lim_n \int_{\Omega}
\frac{p_n}{\abs{u_{p_n}(x_{p_n}^+)}}\abs{u_{p_n}}^{p_n}\\
&\leq \lim_n
\frac{p_n}{\abs{u_{p_n}(x_{p_n}^+)}}\abs{\Omega}^{\frac{1}{p_n+1}}(\int_{\Omega}\abs{u_{p_n}}^{p_n+1})^{p_n/(p_n+1)}.
\end{split}
\end{equation*}
By Proposition~\ref{estinfl} and Remark~\ref{rem}, we deduce that
$\int_{\IR^2} e^z \leq 16\pi e$.   The solutions of $-\Delta z = e^z$ with
$\int_{\IR^2}e^z< +\infty$ are given by $z(x)= \log\left(\frac{\mu}{(1+
  \frac{\mu}{8}\abs{x-x_0}^2)^2}\right)$ for some $\mu >0$.

As $z(x)\leq z(0)=0$ for any $x$, we have that $\mu =1$ and
$x_0=0$. Finally,
$\int_{\IR^2}e^z=8\pi.$ \begin{flushright}$\square$\end{flushright}

\subsection{Rescaling argument in the positive nodal domain}

Theorem~\ref{intro2} implies directly a control on
$d(x_p^{+}, NL_p)$ where $NL_p$ denotes the nodal line of
$u_p$.
\begin{Prop}
\label{controlNL}
We have
\begin{equation}
\label{eq15} \frac{d(x_p^{+},   NL_{p_n})}{\varepsilon_p}\to +\infty
\end{equation}
as $p\to +\infty$.
\end{Prop}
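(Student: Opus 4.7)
The plan is to argue by contradiction using Theorem~\ref{intro2} directly. Suppose that \eqref{eq15} fails, so that along some sequence $p_n\to+\infty$ one has
\[
\frac{d(x_{p_n}^{+}, NL_{p_n})}{\varepsilon_{p_n}} \to L
\]
for some finite $L\ge0$. For each $n$ pick a point $y_{p_n}\in NL_{p_n}$ realizing (or, by a small perturbation, almost realizing) this distance, i.e.\ $\abs{y_{p_n}-x_{p_n}^+}=d(x_{p_n}^+,NL_{p_n})+o(\varepsilon_{p_n})$.

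Set $\xi_{p_n}:=\frac{y_{p_n}-x_{p_n}^+}{\varepsilon_{p_n}}$, so that $\abs{\xi_{p_n}}\to L$; up to a further subsequence, $\xi_{p_n}\to \xi_*$ with $\abs{\xi_*}=L$. Since $y_{p_n}\in NL_{p_n}$ means $u_{p_n}(y_{p_n})=0$, the very definition \eqref{eq13} of the rescaled function yields
\[
z_{p_n}(\xi_{p_n})=\frac{p_n}{u_{p_n}(x_{p_n}^+)}\bigl(u_{p_n}(y_{p_n})-u_{p_n}(x_{p_n}^+)\bigr)=-p_n.
\]
On the other hand, by Theorem~\ref{intro2} we have $z_{p_n}\to z$ in $C^2_{\mathrm{loc}}(\IR^2)$ with $z(\xi)=\log\bigl(\frac{1}{(1+\abs{\xi}^2/8)^2}\bigr)$. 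Since $\xi_{p_n}$ stays in a compact neighborhood of $\xi_*$, uniform convergence on that compact set together with continuity of $z$ gives $z_{p_n}(\xi_{p_n})\to z(\xi_*)$, which is a finite number. This contradicts $z_{p_n}(\xi_{p_n})=-p_n\to-\infty$, and the proposition follows.

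The argument is essentially one line once Theorem~\ref{intro2} is in hand; the only genuine obstacle was establishing the limit profile itself (which is already done). No further compactness or boundary analysis is needed here, because $x_p^+$ has been shown to sit well inside $\Omega$ at scale $\varepsilon_p$, so the rescaled domain contains any fixed ball, and in particular contains $\xi_{p_n}$ for all $n$ large.
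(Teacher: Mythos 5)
Your proof is correct and is essentially the paper's own argument: the paper phrases the contradiction as the level curve $\{x\in\Omega^+(\varepsilon_{p_n}): z_{p_n}(x)=-p_n\}$ intersecting a fixed ball $B(0,R)$ on which $z_{p_n}$ is uniformly bounded, which is exactly your point $\xi_{p_n}$ with $z_{p_n}(\xi_{p_n})=-p_n$. The only cosmetic difference is that you invoke the full $C^2_{\text{loc}}$ convergence to the limit profile where uniform boundedness on compact sets already suffices.
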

\begin{proof}
If the assertion is not true then, for a sequence
$p_n\rightarrow+\infty$ the level curve $C_{p_n}(z_{p_n})=\{ x\in
\Omega^+(\varepsilon_{p_n}), z_{p_n}(x)=-p_n\}$ intersects
$B(0,R)$ for some large $R>0$. This is a contradiction since
$z_{p_n}$ is uniformly bounded in all balls.
\end{proof}

\textit{Proof of Theorem~\ref{intro3}} :  By
Proposition~\ref{controlNL}, we can repeat the proof of
Theorem~\ref{intro2} for the rescaled function $z_p(x)$ in
$\Tilde{\Omega}_p^+$. \begin{flushright}$\square$\end{flushright}

\subsection{Rescaling argument on  $\Omega$ around $x_p^-$}

Let us consider
$\Omega^{-}(\varepsilon_p):= \frac{\Omega - x_p^-}{\varepsilon_p}$ and
$z_{p}^-:\Omega^{-}(\varepsilon_p)\to \IR$ the scaling of $u_{p}$
around $x_{p}^-$:
\begin{equation}
\label{eq13a} z_p^-(x):=\frac{p}{u_p(x_p^+)}(-u_p(\varepsilon_px+
x_p^-)-u_p(x_p^+)).
\end{equation}
To obtain the same kind of result as that of Theorem~\ref{intro2}, we need
\begin{equation}
\label{refbla} \frac{d(x_{p}^{-}, \partial
\Omega)}{\varepsilon_p}\to +\infty
\end{equation}
 as  $p\to +\infty$. To get~\eqref{refbla} we can repeat step by step the
proof of Proposition~\ref{away...}. The only delicate point is the
use of Harnack's inequality when we need that $\psi_p(0)$ is
bounded from below. Nevertheless, requiring that $p (u_p(x_p^+)+
u_p(x_p^-))$ is bounded (alternative ($B$) in the introduction) we
get the boundness of $\psi_p(0)$ and so ~\eqref{refbla} holds.
This explains the role of condition ($B$) in getting
Theorem~\ref{intro2bis}.

\textit{Proof of Theorem~\ref{intro2bis}} : it is obtained
following step by step the proof of Theorem~\ref{intro2}. The
constant $\mu$ in the limit function $z$ can be different from $1$
because
$$z(0)=\lim\limits_{p\rightarrow+\infty}z_p^-(0)=
\lim\limits_{p\rightarrow+\infty}\frac{p}{u_p(x_p^+)}\left(-u_p(x_p^-)-u_p(x_p^+)
\right)\ne0$$ whenever $K$ (in condition ($B$)) is not zero.
\begin{flushright}$\square$\end{flushright}

\subsection{Rescaling argument in the negative nodal domain}

We would like to obtain a result similar to that of Theorem~\ref{intro3}
 for the function $u_p^-$ defined in the negative nodal domain
 $\Tilde{\Omega}_p^-$. We consider solutions satisfying condition $(B)$. By
Theorem~\ref{intro2bis},  working in the same way as in the proof of
Proposition~\ref{controlNL}, we get that
$\varepsilon_{p}^{-1}d(x_{p}^-,NL_{p})\to +\infty$.

\textit{Proof of Theorem~\ref{intro4}} : As ~\eqref{refbla} is
satisfied when $(B)$ holds, we can repeat
 the proof of Theorem~\ref{intro3}, taking into account the remark
 in the proof of Theorem~\ref{intro2bis}.
\begin{flushright}$\square$\end{flushright}
We conclude this section by explaining  the Remark~\ref{R1} of the
introduction. Let us now consider the "natural" rescaling
coefficient
\begin{equation*}
\tilde\varepsilon_{p}^2 :=\frac{1}{p\abs{u_p^-(x_p^-)}^{p-1}} \to 0
\end{equation*}
since  $\liminf_{p\to +\infty} \abs{u_p(x_p^-)}^{p-1}\geq 1$
(see Remark~\ref{rem2}).  We would like to control  the rescaling of
$u_p^-$ around $x_p^-$
\begin{equation*}
z_p(x):= \frac{p}{u_p(x_p^-)}\left( u_p^-(\tilde\varepsilon_{p}
  x+x_p^-)-u_p(x_p^-)\right).
\end{equation*}

The same argument as in the proof of Proposition~\ref{away...} and
Theorem~\ref{intro2} does not work as we might loose the essential
estimate $\abs{1+ \frac{z_p}p}\leq 1$ in the proof. So, we do not
get Proposition~\ref{controlNL} for $x_p^-$ and we need to assume
that $\tilde\varepsilon_{p}^{-1} d(x_p^-, NL_p) \to +\infty$.

\begin{Prop}
\label{away2...} Assume that $\tilde\varepsilon_{p}^{-1}d(x_{p}^-,
NL_{p})\to +\infty$ as $p\rightarrow+\infty$, with
$\tilde\varepsilon_{p}$ defined as $\tilde\varepsilon_{p}^2
:=\frac{1}{{p}\abs{u_{p}^-(x_{p}^-)}^{{p}-1}}$. Then $
\tilde\varepsilon_{p}^{-1}d(x_{p}^{-}, \partial
  \Omega)\rightarrow+\infty$ as $p\to +\infty$.
\end{Prop}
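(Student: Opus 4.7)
The plan is to argue by contradiction and follow essentially verbatim the proof of Proposition~\ref{away...}, exploiting the hypothesis $\tilde\varepsilon_p^{-1}d(x_p^-,NL_p)\to +\infty$ to restore, locally around $x_p^-$, the pointwise bound $|1+z_p/p|\le 1$ that is missing in general (cf.\ Remark~\ref{R1}).

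Suppose, along a subsequence $p_n\to +\infty$, that $d(x_{p_n}^-,\partial\Omega)/\tilde\varepsilon_{p_n}\to l\ge 0$. Pick $\xi_n\in\partial\Omega$ realizing this distance; up to a subsequence, $\xi_n\to x_*\in\partial\Omega$ and $x_{p_n}^-\to x_*$. The key consequence of the hypothesis is that for any fixed $R>0$, $B(x_{p_n}^-,R\tilde\varepsilon_{p_n})\cap\Omega\subset\tilde\Omega_{p_n}^-$ for $n$ large, so on this set $u_{p_n}=u_{p_n}^-$ and $|u_{p_n}|\le |u_{p_n}(x_{p_n}^-)|$.

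In the flat case (where $\partial\Omega$ contains a segment through $x_*$) I would extend $u_{p_n}$ by odd reflection across this segment to a solution $v_{p_n}$ of $-\Delta u=|u|^{p_n-1}u$ on a fixed ball $A$ around $x_*$; the previous confinement gives $|v_{p_n}|\le |u_{p_n}(x_{p_n}^-)|$ throughout $A$. Setting
\begin{equation*}
z_p(x):=\frac{p}{u_p(x_p^-)}\bigl(v_p(\tilde\varepsilon_p x+x_p^-)-u_p(x_p^-)\bigr),\qquad x\in (A-x_p^-)/\tilde\varepsilon_p,
\end{equation*}
the identity $p\tilde\varepsilon_p^2|u_p^-(x_p^-)|^{p-1}=1$ together with $|v_p|\le |u_p(x_p^-)|$ yields
\begin{equation*}
-\Delta z_p=\bigl|1+z_p/p\bigr|^{p-1}\bigl(1+z_p/p\bigr),\qquad \bigl|1+z_p/p\bigr|\le 1,\qquad z_p(0)=0.
\end{equation*}
At this point I would reproduce the closing part of the proof of Proposition~\ref{away...}: on any fixed ball $B(0,R)$, compare $z_{p_n}$ with the Dirichlet solution $w_{p_n}$ having the same bounded right-hand side (so $|w_{p_n}|\le C$); the difference $\psi_{p_n}=z_{p_n}-w_{p_n}$ is harmonic, bounded above, and satisfies $\psi_{p_n}(0)=-w_{p_n}(0)\ge -C$, hence Harnack gives a uniform $L^\infty$ bound for $z_{p_n}$ on compacts. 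On the other hand, $y_n:=(\xi_n-x_{p_n}^-)/\tilde\varepsilon_{p_n}\in\overline{B(0,l+1)}$ and $v_{p_n}(\xi_n)=0$ force $z_{p_n}(y_n)=-p_n\to -\infty$, a contradiction.

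For a general $C^1$ boundary I would invoke the change of variables $\phi:D\to Q^+$ and the positive cut-off $\theta$ already used in the proof of Proposition~\ref{away...} to straighten $\partial\Omega$, reducing back to the flat situation. The only truly delicate point, as emphasized in Remark~\ref{R1}, is maintaining the pointwise estimate $|1+z_p/p|\le 1$ with the new rescaling parameter $\tilde\varepsilon_p$; once it is observed that the nodal-line assumption forces every fixed rescaled ball around $x_p^-$ to lie inside $\tilde\Omega_p^-$, this estimate is automatic, the Harnack step goes through thanks to $z_p(0)=0$, and the rest of the argument is a transcription of the earlier proof, so I do not expect any further obstacle.
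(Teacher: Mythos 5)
Your proposal is correct and follows essentially the same route as the paper: argue by contradiction, use the hypothesis $\tilde\varepsilon_p^{-1}d(x_p^-,NL_p)\to+\infty$ to guarantee that $u_p$ stays negative (hence $|u_p|\le|u_p(x_p^-)|$ and $|1+z_p/p|\le 1$) on a region around $x_p^-$ whose rescaled size tends to infinity, and then transcribe the reflection--Harnack argument of Proposition~\ref{away...}; the paper formalizes this by carving out $\C^1$ domains $D_{p_n}=V_{p_n}\cap\Tilde{\Omega}_{p_n}^-$ with $\tilde\varepsilon_{p_n}^{-1}d(x_*,\partial D_{p_n}\setminus\partial\Omega)\to+\infty$, which is your rescaled-ball confinement in a different guise. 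One caveat: your claim that $|v_{p_n}|\le|u_{p_n}(x_{p_n}^-)|$ holds \emph{throughout the fixed ball} $A$ is not justified (the nodal line need only stay at distance large compared to $\tilde\varepsilon_{p_n}$ from $x_{p_n}^-$, not at a fixed distance, so the positive part of $u_{p_n}$ may enter $A$), but this does not damage the argument, since the bound is only ever used on the rescaled balls $B(0,R)$, i.e.\ on $B(x_{p_n}^-,R\tilde\varepsilon_{p_n})$ and its reflection, where your confinement does provide it.
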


\begin{proof}

Let us work by contradiction and assume that, for a sequence
$p_n\rightarrow+\infty$, $\frac{d(x_{p_n}^{-}, \partial
\Omega)}{\tilde\varepsilon_{p_n}}\to l\geq 0$.  Let us also assume
w.l.o.g.\ that $x_{p_n}^-\to x_* \in\partial \Omega$ (i.e.\
$\frac{d(x_{p_n}^{-}, x_*)}{\tilde\varepsilon_{p_n}}\to l$).

As  $\tilde\varepsilon_{p_n}^{-1}d(x_{p_n}^-,
NL_{p_n})\rightarrow+\infty$, we can construct a sequence  of
$\C^1$-domains $D_{p_n}$ which are the intersection between a
neighborhood $V_{p_n}$ of $x_*$ and $\Tilde{\Omega}_{p_n}^-$ such that
\begin{equation*}
\tilde\varepsilon_{p_n}^{-1}d(x_*, \partial D_{p_n}
\setminus \partial\Omega)\to + \infty.
\end{equation*}
For large $n$, we have that $x_{p_n}^-$  belongs to $D_{p_n}$.
So, as $u_{p_n}$ stays negative in $D_{p_n}$, we can argue in the same
way as Proposition~\ref{away...} to conclude the proof.
\end{proof}

By working in the same way as in Theorem~\ref{intro2} or Theorem~\ref{intro3},
Proposition~\ref{away2...} allows to make
the rescaling  in the negative nodal domain $\Tilde{\Omega}_{p}^-$, so to
obtain for $\frac{p}{u_p(x_p^-)} (u_p^-(\tilde\varepsilon_p x +
x_p^-) -u_p(x_p^-))$ the same assertion as for $z_p$ in
Theorem~\ref{intro3}.

\section{$L^\infty$-estimates}

In the last two sections, we will work in the positive and
negative nodal domains. While dealing with  the positive nodal
domain, $z_p$ will always denote the rescaled function used in
Theorem~\ref{intro2}. For the negative one, the expression of
$z_{p}$ can be defined as in Theorem~\ref{intro4} when $(B)$ holds
(and so with $\varepsilon_p^{-2}= p \abs{u_p(x_p^+)}^{p-1}$).

Let us point out that some proofs will be given just for the
positive case, the negative one being similar.

  \begin{Prop}
  \label{goodb}
For any sequence $p_n\rightarrow+\infty$ we have $\limsup_{n\to
+\infty}
  \abs{u_{p_n}^\pm(x_{p_n}^\pm)}\leq e^{1/2}$.
  \end{Prop}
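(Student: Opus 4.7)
The plan is to sharpen the Fatou argument used in the proof of Theorem~\ref{intro2} by raising the exponent from $p$ to $p+1$. The key algebraic observation is that, since $\varepsilon_p^{-2}=p\,u_p(x_p^+)^{p-1}$, the change of variables applied to $(1+z_p/p)^{p+1}$ produces a factor $p/u_p(x_p^+)^2$ in front of $\int u_p^{p+1}$, whereas the exponent $p$ used in the proof of Theorem~\ref{intro2} only yields $p/u_p(x_p^+)$ in front of $\int u_p^p$. The extra power of $u_p(x_p^+)$ in the denominator is exactly what converts the weak bound $\limsup\le e$ into the sharp $\limsup\le e^{1/2}$.

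Concretely, fix a sequence $p_n\to+\infty$ and work in the positive nodal domain $\tilde\Omega_{p_n}^+$. By Theorem~\ref{intro3}, $z_{p_n}\to z$ in $C^2_{\text{loc}}(\IR^2)$, so on any compact $K\subset \IR^2$ we have $1+z_{p_n}/p_n\to 1$ uniformly and hence
\[
\Bigl(1+\frac{z_{p_n}}{p_n}\Bigr)^{p_n+1}=\Bigl(1+\frac{z_{p_n}}{p_n}\Bigr)^{p_n}\Bigl(1+\frac{z_{p_n}}{p_n}\Bigr)\longrightarrow e^{z}
\]
pointwise on $\IR^2$ (after extending by zero outside $\tilde\Omega^+(\varepsilon_{p_n})$, which exhausts $\IR^2$ by Proposition~\ref{controlNL}). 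Since the integrands are nonnegative, Fatou's lemma gives
\[
8\pi=\int_{\IR^2}e^{z}\le \liminf_{n\to+\infty}\int_{\tilde\Omega^+(\varepsilon_{p_n})}\Bigl(1+\frac{z_{p_n}}{p_n}\Bigr)^{p_n+1}\intd y.
\]
The change of variables $x=\varepsilon_{p_n}y+x_{p_n}^+$, together with $\varepsilon_{p_n}^{-2}=p_n u_{p_n}(x_{p_n}^+)^{p_n-1}$ and the identity $1+z_{p_n}/p_n=u_{p_n}(x)/u_{p_n}(x_{p_n}^+)$ valid on $\tilde\Omega_{p_n}^+$, transforms this bound into
\[
8\pi\le \liminf_{n\to+\infty}\frac{p_n}{u_{p_n}(x_{p_n}^+)^2}\int_{\tilde\Omega_{p_n}^+}u_{p_n}^{p_n+1}.
\]

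By Remark~\ref{rem}, $p_n\int_{\tilde\Omega_{p_n}^+}u_{p_n}^{p_n+1}=p_n\int_\Omega(u_{p_n}^+)^{p_n+1}\to 8\pi e$, so substituting yields $(\limsup_n u_{p_n}(x_{p_n}^+))^2\le e$, which is the desired bound. The negative case follows by an identical argument in $\tilde\Omega_{p_n}^-$, using Theorem~\ref{intro4} in place of Theorem~\ref{intro3} (so either assumption $(B)$ or the alternative of Remark~\ref{R1} is in force); alternatively it is immediate from the positive case via the normalization $|u_p^-(x_p^-)|\le u_p(x_p^+)$. The only subtle point is the justification of Fatou's lemma with the refined exponent $p+1$, which is immediate from the $C^2_{\text{loc}}$ convergence of $z_{p_n}$ to $z$; the real content of the proof is the algebraic choice of exponent that makes the two occurrences of $u_{p_n}(x_{p_n}^+)$ (in $\varepsilon_{p_n}^{-2}$ and in the rescaling itself) combine into a single factor $u_{p_n}(x_{p_n}^+)^{-2}$.
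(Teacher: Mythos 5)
Your proof is correct and is essentially the paper's own argument: both rest on Fatou's lemma applied to $\left\vert 1+\frac{z_{p_n}}{p_n}\right\vert^{p_n+1}$ over $\Tilde{\Omega}^{+}(\varepsilon_{p_n})$, the identity $\varepsilon_{p_n}^{2}\,\abs{u_{p_n}(x_{p_n}^+)}^{p_n+1}=\abs{u_{p_n}(x_{p_n}^+)}^{2}/p_n$, the limit $p_n\int_{\Omega}\abs{u_{p_n}^{+}}^{p_n+1}\to 8\pi e$ from Remark~\ref{rem}, and $\int_{\IR^2}e^{z}=8\pi$; you have merely rearranged the order of the inequalities. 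Your closing observation that the negative case follows at once from the normalization $\abs{u_p^-(x_p^-)}\le u_p(x_p^+)$ is a small but valid simplification of the paper's ``the negative case is similar.''
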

 \begin{proof}
Let us prove the assertion for the positive case.  By Fatou's
lemma, we have
 \begin{equation*}
\begin{split}
1&=\frac{\int_{\Omega}\abs{u_{p_n}^+}^{p_n+1}}{\norm{u_{p_n}^+}_{p_n+1}^{p_n+1}}=\left(\frac{\abs{u_{p_n}^+(x_{p_n}^+)}}
 {\norm{u_{p_n}^+}_{p_n+1}}\right)^{p_n+1}\varepsilon_{p_n}^2
\int_{\Tilde{\Omega}^{+}(\varepsilon_{p_n})}\left\vert1+\frac{z_{p_n}}{p_n}\right\vert^{p_n+1}\\
&= \frac{\abs{u_{p_n}^+(x_{p_n}^+)}^2}{p_n\norm{u_{p_n}^+}_{p_n+1}^{p_n+1}}\int_{\Tilde{\Omega}^{+}(\varepsilon_{p_n})}\left\vert1+\frac{z_{p_n}}{p_n}\right\vert^{p_n+1}\\
& \geq
\frac{\limsup_{n\to +\infty} \abs{u_{p_n}^+(x_{p_n}^+)^2}}{8\pi e}\int_{\IR^2}e^z.
\end{split}
\end{equation*}
As $\int_{\IR^2}e^z=8\pi$, the proof is complete.
\end{proof}

Now, we study the equality in the last statement. We will show that
$\int_{\Tilde{\Omega}^{\pm}(\varepsilon_{p_n})}\left\vert1+\frac{z_{p_n}}{p_n}\right\vert^{p_n+1}$
converges to $\int_{\IR^2}e^z$ with no mass lost at infinity.

Let us consider the linearized operators
\begin{equation*}
L_{p}^+(v) = -\Delta v -p \abs{u_p^+}^{p-1} v
\end{equation*}
for $v:\Tilde{\Omega}_p^+\to \IR$ and let us denote by
$\lambda_i(L^+_p)$ the eigenvalues
of $L^+_p$ with homogenous Dirichlet boundary conditions.

Our aim is to prove Theorem \ref{introthm}, therefore we assume
that the Morse index of $u_p^+$ in $\Omega^+$ is $1$. Hence we
have
$$\lambda_1(L^+_p) < 0\quad\hbox{ and}\quad \lambda_2(L^+_p)\geq 0
\hbox{ in }\Tilde{\Omega}_{p}^+.$$ Then, for $D\subset
\Tilde{\Omega}^+(\varepsilon_{p})$, let us consider
$L^+_{p,D}(v)=-\Delta v -\frac{\abs{u_{p}^+(\varepsilon_{p} x +
    x_{p}^+)}^{p-1}}{\abs{u_{p}^+(x_{p}^+)}^{p-1}}v$
and denote by $\lambda_i(L^{+}_{p,D})$ the corresponding Dirichlet
eigenvalues. By scaling, we get
\begin{Lem}
\label{lem111}
$\lambda_1(L^{+}_{p,\Tilde{\Omega}^{+}(\varepsilon_{p})}) < 0$ and
  $\lambda_2(L^{+}_{p,\Tilde{\Omega}^{+}(\varepsilon_{p})}) \geq 0$.
\end{Lem}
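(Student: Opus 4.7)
The plan is to reduce the lemma to a direct change of variables. The rescaled operator $L^+_{p,\Tilde{\Omega}^+(\varepsilon_p)}$ is obtained from $L^+_p$ by the substitution $y = \varepsilon_p x + x_p^+$, so I would set up a bijection between the eigenpairs of the two operators and check that the sign of each eigenvalue is preserved.

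More precisely, given $v$ a Dirichlet eigenfunction of $L^+_{p,\Tilde{\Omega}^+(\varepsilon_p)}$ on $\Tilde{\Omega}^+(\varepsilon_p)$ with eigenvalue $\lambda$, I define $w(y) := v\bigl((y-x_p^+)/\varepsilon_p\bigr)$ on $\Tilde{\Omega}_p^+$. Computing $-\Delta_x v = \lambda v + \frac{\abs{u_p^+(\varepsilon_p x+x_p^+)}^{p-1}}{\abs{u_p^+(x_p^+)}^{p-1}} v$ and translating to the $y$-variable via $\Delta_x = \varepsilon_p^2\,\Delta_y$, I obtain
\begin{equation*}
-\Delta_y w - \frac{1}{\varepsilon_p^2\,\abs{u_p^+(x_p^+)}^{p-1}}\,\abs{u_p^+(y)}^{p-1}\,w = \frac{\lambda}{\varepsilon_p^2}\,w.
\end{equation*}
By the very definition $\varepsilon_p^{-2} = p\,u_p(x_p^+)^{p-1}$, the coefficient $\frac{1}{\varepsilon_p^2\,\abs{u_p^+(x_p^+)}^{p-1}}$ equals $p$, so $w$ satisfies $L^+_p w = \frac{\lambda}{\varepsilon_p^2} w$ with homogeneous Dirichlet boundary conditions on $\Tilde{\Omega}_p^+$. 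The map $v \mapsto w$ is a linear bijection between the corresponding $H^1_0$-spaces preserving orthogonality (up to the Jacobian factor $\varepsilon_p^2$, which is a strictly positive constant), hence by the min-max characterization
\begin{equation*}
\lambda_i\bigl(L^+_{p,\Tilde{\Omega}^+(\varepsilon_p)}\bigr) = \varepsilon_p^2\,\lambda_i(L^+_p), \qquad i=1,2.
\end{equation*}
Since $\varepsilon_p^2 > 0$, the signs of $\lambda_1$ and $\lambda_2$ are preserved.

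Combining this with the Morse index hypothesis $\lambda_1(L^+_p) < 0$ and $\lambda_2(L^+_p) \geq 0$ yields the conclusion. There is essentially no obstacle here; the only thing to be a little careful about is keeping track of the factor $\varepsilon_p^2$ through the change of variables and making sure the definition of $\varepsilon_p$ is invoked to cancel the rescaled potential to the right normalization.
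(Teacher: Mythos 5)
Your argument is correct and is precisely the scaling argument the paper invokes (the paper gives no details, stating only ``By scaling, we get''). The change of variables, the cancellation $p\varepsilon_p^2\,\abs{u_p^+(x_p^+)}^{p-1}=1$, and the resulting identity $\lambda_i(L^+_{p,\Tilde{\Omega}^+(\varepsilon_p)})=\varepsilon_p^2\,\lambda_i(L^+_p)$ are exactly what is needed, so the signs transfer from the Morse index hypothesis as you say.
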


\begin{Lem}
\label{careful} Let $p\to +\infty$, there exists $r>0$ such that
$\lambda_1(L^{+}_{p,B(0,r)})<0$ for large $p$.
\end{Lem}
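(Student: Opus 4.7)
The plan is to pass to the limit $p\to+\infty$ in the potential of $L^+_{p,B(0,r)}$ and reduce the statement to showing that the limit operator $-\Delta - e^z$ has negative first Dirichlet eigenvalue on some sufficiently large ball $B(0,r)$. By Proposition~\ref{controlNL}, for any fixed $r > 0$ we have $u_p>0$ throughout $B(x_p^+, r\varepsilon_p)$ for $p$ large, so on $B(0,r)$ the coefficient of $L^+_{p,B(0,r)}$ equals $(1+z_p(x)/p)^{p-1}$. The $C^2_{\text{loc}}$ convergence $z_p\to z$ from Theorem~\ref{intro2} gives a uniform $C^0$-bound on $z_p$ over $B(0,r)$, and a Taylor expansion of $(p-1)\log(1+z_p/p)$ then yields $(1+z_p/p)^{p-1}\to e^z$ uniformly on $B(0,r)$.

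It therefore suffices to exhibit a test function $v\in H^1_0(B(0,r))$ with $\int_{B(0,r)}(\abs{\nabla v}^2 - e^z v^2)\,dx < 0$. I would use the radial tent $v_r(x):=\max(1-\abs{x}/r,0)$, for which a direct polar-coordinate computation gives $\int_{\IR^2}\abs{\nabla v_r}^2 = \pi$, independently of $r$. Since $v_r\geq 1/2$ on $B(0,r/2)$ and $\int_{B(0,r/2)} e^z\,dx \to \int_{\IR^2} e^z = 8\pi$ as $r\to+\infty$ by Theorem~\ref{intro2}, one obtains $\int e^z v_r^2 \geq \frac{1}{4}\int_{B(0,r/2)}e^z \,dx\to 2\pi > \pi$, so the Rayleigh quotient for $-\Delta-e^z$ at $v_r$ is negative for $r$ large enough. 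Fixing such an $r$, the uniform convergence of the potentials yields $\int_{B(0,r)}(\abs{\nabla v_r}^2 - (1+z_p/p)^{p-1} v_r^2)\,dx < 0$ for all $p$ large, hence $\lambda_1(L^+_{p,B(0,r)}) < 0$.

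The decisive fact is that the total mass $8\pi$ of the limiting Liouville profile comfortably exceeds the Dirichlet energy $\pi$ of natural radial cutoffs, which is what makes the limit operator spectrally negative on large balls. The only genuinely technical step is the uniform passage to the limit $(1+z_p/p)^{p-1}\to e^z$ on $B(0,r)$, but this is routine once one has the $C^2_{\text{loc}}$ bounds on $z_p$ coming from Theorem~\ref{intro2}; no deeper obstacle is expected.
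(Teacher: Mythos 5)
Your proof is correct, and it takes a genuinely different route from the paper's. The paper does not pass to the limit operator $-\Delta-e^z$ at all: it exploits the dilation structure of the problem by introducing the explicit function $w_{p}=x\cdot\nabla z_{p}+\frac{2}{p-1}z_{p}+\frac{2p}{p-1}$, which solves the linearized equation $-\Delta w = \frac{\abs{u_{p}^+(\varepsilon_{p}x+x_{p}^+)}^{p-1}}{\abs{u_{p}^+(x_{p}^+)}^{p-1}}w$ exactly; since $w_p(0)\to 2>0$ while $w_p\to 2-\frac{4r^2}{8+r^2}<0$ on $\partial B(0,r)$ for $r$ large, truncating $w_p$ to its positivity set inside $B(0,r)$ gives an $H^1_0(B(0,r))$ test function on which the quadratic form of $L^+_{p,B(0,r)}$ vanishes identically, forcing $\lambda_1<0$ (equality $\lambda_1=0$ is excluded because the truncation vanishes on an open set and so cannot be a first eigenfunction). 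Your argument instead replaces this structural input by the quantitative fact that the Liouville mass $\int_{\IR^2}e^z=8\pi$ dominates the scale-invariant Dirichlet energy $\pi$ of the planar tent function, combined with the uniform convergence $(1+z_p/p)^{p-1}\to e^z$ on fixed balls. This is more elementary and gives strict negativity directly at the level of the Rayleigh quotient; it is also more robust, since it only uses $\int_{\IR^2}e^z>4\pi$ rather than the precise form of the linearized kernel. What the paper's choice buys is independence from any computation with the limit profile beyond its pointwise form (the sign change of $w_p$ encodes the instability geometrically), and the function $w_p$ is the natural object tied to the scaling invariance that reappears in related Pohozaev-type arguments. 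Both proofs produce an admissible radius $r$ that feeds correctly into Lemma~\ref{principle1}, so your version can be substituted without affecting the rest of Section 4.
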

\begin{proof}
Let us consider $w_{p}= x.\nabla z_{p} + \frac{2}{p-1}z_{p} +
\frac{2p}{p-1}$. We have that $w_{p}$ satisfies $-\Delta w =
\frac{\abs{u_{p}^+(\varepsilon_{p}x +
 x_{p}^+)}^{p-1}}{\abs{u_{p}^+(x_{p}^+)}^{p-1}}w$.

We also have $w_{p}(0)\to 2$. As $z_{p} \to
z=\log\big(\frac{1}{\bigl(1+\frac{\abs{x}^2}{8}\bigr)^2}\big)$,
for $\abs{x}=r$, we get
\begin{equation*}
w_{p}(x) \to -\frac{4 r^2}{8 + r^2}+2
\end{equation*}
as $p\to +\infty$. So, for large $r$, $w_{p}\to \alpha <0$  on
$\partial B[0,r]$.

Let us fix such a $r$. By considering $A_{p}:=\{x\in B(0,r):
w_{p}>0\}$ and the function $\Bar{w}_{p}$ equals to $w_{p}$ on
$A_{p}$ ($0$ otherwise), we get

\begin{equation*}
\int_{B(0,r)} \abs{\nabla \Bar{w}_{p}}^2 -\int_{B(0,r)}
\frac{\abs{u_{p}^+(\varepsilon_{p} x +
    x_{p}^+)}^{p-1}}{\abs{u_{p}^+(x_{p}^+)}^{p-1}}\Bar{w}_{p}^2=0,
\end{equation*}
which implies our statement.
\end{proof}

\begin{Lem}
\label{principle1} For $p$ large, $\lambda_1(L^{+}_{p,
  \Tilde{\Omega}^{+}(\varepsilon_{p})\setminus B(0,r)})>0$, where $r$
is given by Lemma~\ref{careful}.
\end{Lem}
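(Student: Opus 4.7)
The plan is to argue by contradiction using the Courant--Fischer variational characterization of the second Dirichlet eigenvalue, combined with the domain-decomposition trick of extending eigenfunctions by zero. Suppose, along a subsequence $p_n\to+\infty$, that $\nu_n:=\lambda_1(L^{+}_{p_n,\Tilde{\Omega}^{+}(\varepsilon_{p_n})\setminus B(0,r)})\le 0$. I would let $\psi_n$ be a corresponding $L^2$-normalized first Dirichlet eigenfunction on the complement, extended by zero to all of $\Tilde{\Omega}^{+}(\varepsilon_{p_n})$. From Lemma~\ref{careful}, $\mu_n:=\lambda_1(L^{+}_{p_n, B(0,r)})<0$ for $n$ large; let $\phi_n$ denote a corresponding $L^2$-normalized first eigenfunction on $B(0,r)$, again extended by zero.

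The key observation to exploit is that $\phi_n$ and $\psi_n$ have essentially disjoint supports inside $\Tilde{\Omega}^{+}(\varepsilon_{p_n})$, so both belong to $H^1_0(\Tilde{\Omega}^{+}(\varepsilon_{p_n}))$, are linearly independent and $L^2$-orthogonal, and the mixed gradient and potential terms vanish. On the $2$-dimensional subspace $V_n:=\mathrm{span}(\phi_n,\psi_n)$, the quadratic form of $L^{+}_{p_n}$ diagonalizes:
\[
\int \abs{\nabla(a\phi_n+b\psi_n)}^2 - \int \frac{\abs{u_{p_n}^+(\varepsilon_{p_n}x+x_{p_n}^+)}^{p_n-1}}{\abs{u_{p_n}^+(x_{p_n}^+)}^{p_n-1}}(a\phi_n+b\psi_n)^2 = a^2\mu_n+b^2\nu_n,
\]
with $\norm{a\phi_n+b\psi_n}_2^2 = a^2+b^2$. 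Therefore its maximum on the $L^2$-unit sphere of $V_n$ equals $\max(\mu_n,\nu_n)\le 0$. Applying the min-max principle to $L^{+}_{p_n}$ on $\Tilde{\Omega}^{+}(\varepsilon_{p_n})$ with this trial subspace yields
\[
\lambda_2\bigl(L^{+}_{p_n, \Tilde{\Omega}^{+}(\varepsilon_{p_n})}\bigr)\le \max(\mu_n,\nu_n)\le 0,
\]
which is in contradiction with Lemma~\ref{lem111} coming from the Morse-index-one hypothesis on $u_p^+$.

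The main obstacle I anticipate is the borderline degenerate case $\nu_n=0$: the argument as stated produces a strict contradiction only when $\max(\mu_n,\nu_n)<0$, i.e.\ when $\nu_n<0$. Read literally with the weak inequality $\lambda_2\ge 0$, the case $\nu_n=0$ would only force $\lambda_2=0$, not a genuine contradiction. I would close this gap either by invoking the sharp nondegenerate form of ``Morse index equal to one'', which yields $\lambda_2>0$ strictly, or by ruling out $\nu_n=0$ directly: since $\abs{1+z_{p_n}/p_n}^{p_n-1}$ converges locally uniformly to $e^{z(x)}=(1+\abs{x}^2/8)^{-2}$, and since $r$ supplied by Lemma~\ref{careful} can be enlarged if needed, the rescaled potential is small on $\{\abs{x}>r\}$; combined with a weighted Poincaré (or Hardy-type) inequality in the exterior, this yields a uniform lower bound $\nu_n\ge \delta(r)>0$ that excludes the degenerate borderline.
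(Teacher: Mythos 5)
Your argument is exactly the paper's proof: the authors dispose of the lemma in one line by observing that a negative first eigenvalue on $\Tilde{\Omega}^{+}(\varepsilon_{p})\setminus B(0,r)$, together with Lemma~\ref{careful} on $B(0,r)$, would force $\lambda_2(L^{+}_{p,\Tilde{\Omega}^{+}(\varepsilon_{p})})<0$, contradicting Lemma~\ref{lem111} --- precisely your disjoint-support min-max construction. Your extra discussion of the borderline case $\nu_n=0$ is a point the paper silently skips over (its argument, like yours, literally only excludes strict negativity), so flagging it and proposing a fix is a genuine refinement rather than a defect of your proposal.
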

\begin{proof}
If $\lambda_1(L^{+}_{p,
  \Tilde{\Omega}^{+}(\varepsilon_{p})\setminus B(0,r)})$ was
negative then, by Lemma~\ref{careful} we would have
$\lambda_2(L^{+}_{p,\Tilde{\Omega}^{+}(\varepsilon_{p})})<0$ which
contradicts Lemma~\ref{lem111}.
\end{proof}

\textit{Proof of Theorem~\ref{intro5} : } Since we are analyzing
nodal solutions which satisfy condition ($B$), it is enough to
prove that $\norm{u_{p}^+}_{\infty}\to \sqrt{e}$ as
$p\rightarrow+\infty$. Let us argue by contradiction and assume
that for a sequence $p_n\rightarrow+\infty$, by
Proposition~\ref{goodb},  $\lim\limits_{n\to
\infty}\abs{u_{p_n}^+(x_{p_n}^+)}= \lim\limits_{n\to
\infty}\norm{u_{p_n}^+}_{\infty}< e^{1/2}$. We claim that this
implies $z_{p_n}(x)-z(x) \leq C$ on
$\Tilde{\Omega}^{+}(\varepsilon_{p_n})$ uniformly.

Indeed, $z_{p_n}$ converges to $z$ on each compact set. In particular, on
$B(0,r)$, where $r$ is given by Lemma~\ref{careful}. So, it is enough
to check what happens in $\Tilde{\Omega}^{+}(\varepsilon_{p_n})\setminus B(0,r)$.

On one hand, $-\Delta z = e^z \geq \abs{1+\frac{z}{p}}^{p}$ for any
$p>1$. On the other hand, by computing $z_{p_n} -z$ on $\partial
\Tilde{\Omega}^{+}(\varepsilon_{p_n})\setminus B[0,r]$, we get for
some uniform constant $C$
\begin{equation*}
\begin{split}
z_{p_n}(x)-z(x) &= -p_n - \log\left(\frac{1}{(1+ \frac{\abs{x}^2}{8})^2}\right)\\
&
\leq-p_n-\log\left(C\frac{\varepsilon_{p_n}^4}{\hbox{d}(x^+_{p_n},\partial\Omega)^4}\right)\\
& \leq -p_n + 2 \log (p_n
\abs{u_{p_n}^+(x_{p_n}^+)}^{p_n-1})+4\log\left(\hbox{d}(x^+_{p_n},\partial\Omega)\right)+C\\
& \leq -p_n + 2 \log (p_n \abs{u_{p_n}^+(x_{p_n}^+)}^{p_n-1})+C\\
&\leq  -p_n + 2\log (\abs{u_{p_n}^+(x_{p_n}^+)}^{p_n-1}) + C \leq C
\end{split}
\end{equation*}
where we used that  $\abs{u_{p_n}^+(x_{p_n}^+)} < e^{1/2}$ and
$\hbox{d}(x^+_{p_n},\partial\Omega)\leq C$
for large $n$ (by contradiction).

We also have the estimate on $\partial B(0,r)$ because we have the
convergence on each compact set.

Finally, by convexity, we have
\begin{equation*}
-\Delta z_{p_n} +\Delta z \leq \left\vert1+\frac{z_{p_n}}{p_n}\right\vert^{p_n-1}
(z_{p_n} -z) = \frac{\abs{u_{p_n}^+(\varepsilon_{p_n}x+
  x_{p_n}^+)}^{p_n-1}}{\abs{u_{p_n}(x_{p_n}^+)}^{p_n-1}}(z_{p_n}-z).
\end{equation*}
Since the maximum principle holds in
$\Tilde{\Omega}^{+}(\varepsilon_{p_n})\setminus B(0,r)$ for
$L^{+}_{p_n, \Tilde{\Omega}^{+}(\varepsilon_{p_n}) \setminus B(0,r)}$ (see
Lemma~\ref{principle1}), we deduce our claim.

From this claim, we obtain that
$\int_{\Tilde{\Omega}^{+}(\varepsilon_{p_n})} \left\vert
  1+\frac{z_{p_n}}{p_n}\right\vert^{p_n+1}$
converges to $\int_{\IR^2} e^z$. So,
\begin{equation*}
\begin{split}
1&= \frac{\int_{\Omega} \abs{u_{p_n}^+}^{p_n+1}}{\norm{u_{p_n}^{+}}_{p_n+1}^{p_n+1}}=
\frac{\abs{u_{p_n}^+(x_{p_n}^+)}^{p_n+1}}{\norm{u_{p_n}^{+}}_{p_n+1}^{p_n+1}}\varepsilon_{p_n}^2\int_{\Tilde{\Omega}^{+}(\varepsilon_{p_n})}\left\vert
  1+\frac{z_{p_n}}{p_n}
\right\vert^{p_n+1}\\
&= \frac{\norm{u_{p_n}^+}_{\infty}^{2}}{8\pi e + o(1)} (8\pi + o(1)),
\end{split}
\end{equation*}
which proves that $\lim_{n\to\infty}\norm{u_{p_n}^+}_{\infty}=e^{1/2}$,
which is a contradiction.
\begin{flushright}$\square$\end{flushright}

\section{Green's characterizations}

To start with, we observe that  Theorem~\ref{intro5} gives a
direct way to prove  the convergence of
$\int_{\Tilde{\Omega}^{\pm}(\varepsilon_{p})} \left\vert
1+\frac{z_{p}}{p}\right\vert^{p+1}$, as $p\to +\infty$.

\begin{Prop}
\label{conv1} As $p\to +\infty$
$\int_{\Tilde{\Omega}^{\pm}(\varepsilon_{p})}\left\vert
  1+\frac{z_{p}}{p}\right\vert^{p+1}\rightarrow\int_{\IR^2}e^z=8\pi$.
\end{Prop}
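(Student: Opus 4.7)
The plan is to deduce this convergence as an essentially immediate consequence of Theorem~\ref{intro5}, through the rescaling change of variables that defines $z_p$.

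First I would record the algebraic identity coming from this change of variables. On $\Tilde{\Omega}^+(\varepsilon_p)$ one has $1+z_p(x)/p=u_p(\varepsilon_p x+x_p^+)/u_p(x_p^+)>0$, so the substitution $y=\varepsilon_p x+x_p^+$ in the $L^{p+1}$ norm gives
\begin{equation*}
\int_{\Omega}\abs{u_p^+}^{p+1}=\varepsilon_p^2\,\abs{u_p(x_p^+)}^{p+1}\int_{\Tilde{\Omega}^+(\varepsilon_p)}\Bigl|1+\frac{z_p}{p}\Bigr|^{p+1}.
\end{equation*}
Using $\varepsilon_p^{-2}=p\,u_p(x_p^+)^{p-1}$, the prefactor collapses to $\abs{u_p(x_p^+)}^2/p$, whence
\begin{equation*}
\int_{\Tilde{\Omega}^+(\varepsilon_p)}\Bigl|1+\frac{z_p}{p}\Bigr|^{p+1}=\frac{p\int_{\Omega}\abs{u_p^+}^{p+1}}{\abs{u_p(x_p^+)}^2}.
\end{equation*}

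Second, I would pass to the limit in this quotient. Testing the equation $-\Delta u_p=\abs{u_p}^{p-1}u_p$ against $u_p^+$ yields $\int_{\Omega}\abs{\nabla u_p^+}^2=\int_{\Omega}\abs{u_p^+}^{p+1}$; by Remark~\ref{rem} the numerator $p\int_{\Omega}\abs{u_p^+}^{p+1}$ therefore converges to $8\pi e$, while by Theorem~\ref{intro5} the denominator $\abs{u_p(x_p^+)}^2=\norm{u_p^+}_\infty^2$ converges to $e$. The quotient converges to $8\pi=\int_{\IR^2}e^z$, which is the desired conclusion on the positive nodal domain.

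For the negative case I would follow exactly the same recipe, under condition $(B)$ so that Theorem~\ref{intro5} delivers $\norm{u_p^-}_\infty\to e^{1/2}$ and with the rescaling of Theorem~\ref{intro4}: $u_p^+$ and $x_p^+$ are replaced by $u_p^-$ and $x_p^-$ (with the common parameter $\varepsilon_p^{-2}=p\norm{u_p}_\infty^{p-1}$), and $p\int_{\Omega}\abs{u_p^-}^{p+1}\to 8\pi e$ is again supplied by Remark~\ref{rem}. There is no genuine obstacle at this stage: the substantive no-loss-of-mass input, namely the uniform pointwise bound $z_{p_n}-z\leq C$ on $\Tilde{\Omega}^+(\varepsilon_{p_n})\setminus B(0,r)$, was already established inside the proof of Theorem~\ref{intro5}, so Proposition~\ref{conv1} is simply the cleanest way to record what that argument yields.
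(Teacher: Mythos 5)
Your proof is correct and follows essentially the same route as the paper: the change-of-variables identity reducing the integral to $\frac{p\int_{\Omega}\abs{u_{p}^{\pm}}^{p+1}}{\abs{u_{p}(x_{p}^{\pm})}^{2}}$, followed by the Nehari identity together with Remark~\ref{rem} for the numerator and Theorem~\ref{intro5} for the denominator. No gaps.
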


\begin{proof}
Let us give the proof for the positive case.  For any $n\in\IN$, we have
\begin{equation*}
\begin{split}
\int_{\Tilde{\Omega}^{+}(\varepsilon_{p})}\left\vert
  1+\frac{z_{p}}{p}\right\vert^{p+1} &=\frac{p
  \int_{\Omega}\abs{u_{p}^+}^{p+1}}{\abs{u_{p}^+(x_{p}^+)}^2}.
\end{split}
\end{equation*}
As the right-hand side converges to $8\pi$, we obtain our statement.
\end{proof}

The previous result implies a similar statement where the exponent
$p+1$ is replaced by $p$.
\begin{Prop}
\label{conv2}
We have
\begin{equation*}
 \int_{\Tilde{\Omega}^{\pm}(\varepsilon_{p_n})}\left\vert
  1+\frac{z_{p}}{p}\right\vert^{p}\to 8\pi
\end{equation*}
as $p\to +\infty$.

\end{Prop}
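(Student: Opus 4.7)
The plan is to deduce Proposition~\ref{conv2} from Proposition~\ref{conv1} by establishing matching asymptotic bounds for $\int_{\Tilde{\Omega}^{\pm}(\varepsilon_p)}|1+z_p/p|^p$; I describe the positive case, the negative one being analogous. The lower bound is immediate: since $|1+z_p/p|\in[0,1]$ on $\Tilde{\Omega}^{+}(\varepsilon_p)$, we have $|1+z_p/p|^p\geq|1+z_p/p|^{p+1}$ pointwise, so Proposition~\ref{conv1} directly gives
\[
\liminf_{p\to+\infty}\int_{\Tilde{\Omega}^{+}(\varepsilon_p)}|1+z_p/p|^p\geq 8\pi.
\]

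For the upper bound I would split at a radius $R>0$. On $B(0,R)$, which sits inside $\Tilde{\Omega}^{+}(\varepsilon_p)$ for large $p$ by Propositions~\ref{away...} and~\ref{controlNL}, Theorem~\ref{intro3} gives $z_p\to z$ in $C^{2}_{\text{loc}}(\IR^2)$, so $|1+z_p/p|^p\to e^{z}$ uniformly on $B(0,R)$ and $\int_{B(0,R)}|1+z_p/p|^p\to\int_{B(0,R)}e^{z}$. On the complement, Hölder's inequality with exponents $(p+1)/p$ and $p+1$ yields
\[
\int_{\Tilde{\Omega}^{+}(\varepsilon_p)\setminus B(0,R)}|1+z_p/p|^p\leq\left(\int_{\Tilde{\Omega}^{+}(\varepsilon_p)\setminus B(0,R)}|1+z_p/p|^{p+1}\right)^{p/(p+1)}|\Tilde{\Omega}^{+}(\varepsilon_p)|^{1/(p+1)}.
\]
The first factor tends to $\int_{\IR^2\setminus B(0,R)}e^{z}$ as $p\to+\infty$, by Proposition~\ref{conv1} combined with the bulk convergence just mentioned; the second factor is bounded by $(|\Omega|\,p\,u_p(x_p^+)^{p-1})^{1/(p+1)}$, which converges to $e^{1/2}$ by Theorem~\ref{intro5}.

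Combining the two pieces gives
\[
\limsup_{p\to+\infty}\int_{\Tilde{\Omega}^{+}(\varepsilon_p)}|1+z_p/p|^p\leq\int_{B(0,R)}e^{z}+e^{1/2}\int_{\IR^2\setminus B(0,R)}e^{z},
\]
and letting $R\to+\infty$ yields $\limsup\leq\int_{\IR^2}e^{z}=8\pi$, which with the lower bound proves the claim. The main obstacle is that a single Hölder application on the whole rescaled domain only gives $\limsup\leq 8\pi\cdot e^{1/2}$; the splitting at $R$ is essential, since the slightly loose factor $e^{1/2}$ then only multiplies the tail $\int_{\IR^2\setminus B(0,R)}e^{z}$, which tends to $0$ as $R\to+\infty$. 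The negative case is handled identically, invoking Theorems~\ref{intro2bis} and~\ref{intro4} under the hypothesis~$(B)$.
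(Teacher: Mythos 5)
Your proof is correct and follows essentially the same route as the paper's: the lower bound comes from the pointwise inequality $\left\vert 1+\frac{z_p}{p}\right\vert^{p+1}\le\left\vert 1+\frac{z_p}{p}\right\vert^{p}$ together with Proposition~\ref{conv1}, and the upper bound from splitting at a radius and applying H\"older on the tail with the factor $\abs{\Tilde{\Omega}^{+}(\varepsilon_p)}^{1/(p+1)}$ kept bounded. The only cosmetic difference is that you identify the tail limit directly as $\int_{\IR^2\setminus B(0,R)}e^{z}$ and let $R\to+\infty$, whereas the paper packages the same fact as the $\varepsilon$--$R_\varepsilon$ ``no mass at infinity'' statement~\eqref{gree}.
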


\begin{proof}
Let us give the proof for the positive case.  On one hand, as $\left\vert
  1+\frac{z_{p}}{p}\right\vert\leq 1$, we have
$\int_{\Tilde{\Omega}^{+}(\varepsilon_{p})}\left\vert
  1+\frac{z_{p}}{p}\right\vert^{p+1}\leq
\int_{\Tilde{\Omega}^{+}(\varepsilon_{p})}\left\vert
  1+\frac{z_{p}}{p}\right\vert^{p}$. By
Proposition~\ref{conv1}, we get $8\pi \leq \liminf\limits_{p\to
+\infty} \int_{\Tilde{\Omega}^{+}(\varepsilon_{p})}\left\vert
  1+\frac{z_{p}}{p}\right\vert^{p}$.

On the other hand,  as
$\int_{\Tilde{\Omega}^{+}(\varepsilon_{p})}\left\vert
  1+\frac{z_{p}}{p}\right\vert^{p+1}\to 8\pi$ and $\left\vert
  1+\frac{z_{p}}{p}\right\vert^{p+1}\to e^z$ with
$\int_{\IR^2}e^z=8\pi$ (see Theorem~\ref{intro3} and
Theorem~\ref{intro4}), we have
\begin{equation}
\label{gree} \forall \varepsilon >0,\ \exists R_{\varepsilon}>0
\text{ and }p_\varepsilon : \forall p>p_\varepsilon\ \text{ and }
\ R>R_\varepsilon, \ \int_{\Tilde{\Omega}^{+}(\varepsilon_{p})\cap
\{\abs{x}> R\}}\left\vert
  1+\frac{z_{p}}{p}\right\vert^{p+1} \leq \varepsilon.
\end{equation}
By interpolation, we get for any $\varepsilon>0$ that
\begin{equation*}
\begin{split}
\int_{\Tilde{\Omega}^{+}(\varepsilon_{p})}&\left\vert
  1+\frac{z_{p}}{p}\right\vert^{p} =
\int_{\Tilde{\Omega}^{+}(\varepsilon_{p})\cap\{ \abs{x}\leq
R_{\varepsilon}\}} \left\vert
  1+\frac{z_{p}}{p}\right\vert^{p} +
\int_{\Tilde{\Omega}^{+}(\varepsilon_{p})\cap \{\abs{x}>
R_\varepsilon\}}\left\vert
  1+\frac{z_{p}}{p}\right\vert^{p}    \\
&\leq  \int_{\Tilde{\Omega}^{+}(\varepsilon_{p})\cap
  \{\abs{x}\leq R_{\varepsilon}\}}
\left\vert
  1+\frac{z_{p}}{p}\right\vert^{p} +
\left(\int_{\Tilde{\Omega}^{+}(\varepsilon_{p})\cap
    \{\abs{x}>R_\varepsilon\}}\left\vert
  1+\frac{z_{p}}{p}\right\vert^{p+1}\right)^{\frac{p}{p+1}}
\abs{\Tilde{\Omega}^{+}(\varepsilon_{p})}^{\frac{1}{p+1}}\\
&\leq  \int_{\Tilde{\Omega}^{+}(\varepsilon_{p})\cap
\{\abs{x}\leq R_{\varepsilon}\}} \left\vert
  1+\frac{z_{p}}{p}\right\vert^{p} +
\left(\int_{\Tilde{\Omega}^{+}(\varepsilon_{p})\cap
    \{\abs{x}>R_\varepsilon\}}\left\vert
  1+\frac{z_{p}}{p}\right\vert^{p+1}\right)^{\frac{p}{p+1}}\abs{\Omega}^{\frac{1}{p+1}}\varepsilon_{p}^{\frac{-1}{p+1}}.
\end{split}
\end{equation*}
As $ \int_{\Tilde{\Omega}^{+}(\varepsilon_{p})\cap
  \{\abs{x}\leq R_\varepsilon\}}
\left\vert
  1+\frac{z_{p}}{p}\right\vert^{p}\to C \leq 8\pi$ and
$\varepsilon_{p}^{\frac{-1}{p+1}}\to e^{1/4}$  as $p\to +\infty$,
we get by~\eqref{gree} that, for any $\varepsilon >0$, there
exists $\Bar{p}>0$ such that if $p>\Bar{p}$ then
\begin{equation*}
   \int_{\Tilde{\Omega}^{+}(\varepsilon_{p})}\left\vert
  1+\frac{z_{p}}{p}\right\vert^{p} \leq (8\pi +\varepsilon)
+ \varepsilon^{\frac{p}{p+1}} (e^{\frac{1}{4}}+ \varepsilon),
\end{equation*}
which implies  our statement.
\end{proof}
Let us denote by $G$  the Green's function of $\Omega$ and by
$x^\pm\in \Bar{\Omega}$ the limit points of $x_{p}^\pm$ as $p\to
+\infty$.

\begin{Lem}
\label{lemgreen}
Let $x\not = x^\pm$. We have
\begin{equation*}
\int_{\Tilde{\Omega}^\pm (\varepsilon_{p})}
G(x,\varepsilon_{p}\psi + x_{p}^\pm)\left\vert
  1+\frac{z_{p}}{p}\right\vert^{p}\intd \psi\to 8\pi G(x,x^\pm).
\end{equation*}
\end{Lem}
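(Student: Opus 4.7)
The plan is to split the integral in rescaled coordinates: for $R>0$, write
\begin{equation*}
\int_{\Tilde{\Omega}^\pm(\varepsilon_p)}=\int_{\{\abs{\psi}\leq R\}\cap\Tilde{\Omega}^\pm(\varepsilon_p)}+\int_{\{\abs{\psi}>R\}\cap\Tilde{\Omega}^\pm(\varepsilon_p)},
\end{equation*}
handle the bounded piece by uniform/dominated convergence, and handle the tail by a H\"older argument modelled on the proof of Proposition~\ref{conv2}. I describe the $+$ case; the $-$ case is analogous.

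For the bounded piece, since $x_p^+\to x^+$ and $x\neq x^+$, the points $\varepsilon_p\psi+x_p^+$ lie in a compact neighborhood of $x^+$ on which $G(x,\cdot)$ is continuous, so $G(x,\varepsilon_p\psi+x_p^+)\to G(x,x^+)$ uniformly for $\abs{\psi}\leq R$. Combined with the locally uniform convergence $\abs{1+z_p/p}^p\to e^z$ from Theorem~\ref{intro3} and the pointwise bound $\abs{1+z_p/p}\leq 1$, dominated convergence yields
\begin{equation*}
\int_{\{\abs{\psi}\leq R\}\cap\Tilde{\Omega}^+(\varepsilon_p)}G(x,\varepsilon_p\psi+x_p^+)\Bigabs{1+\tfrac{z_p}{p}}^p\intd\psi\longrightarrow G(x,x^+)\int_{\{\abs{\psi}\leq R\}}e^z\intd\psi,
\end{equation*}
and the right side tends to $8\pi G(x,x^+)$ as $R\to+\infty$.

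For the tail, I would return to $y$-coordinates: it equals $\frac{p}{u_p(x_p^+)}\int_{A_p}G(x,y)(u_p^+)^p\intd y$ with $A_p:=\Tilde{\Omega}_p^+\setminus B(x_p^+,R\varepsilon_p)$. Fix $\eta>0$ with $\abs{x-x^+}>3\eta$, and split $A_p=(A_p\setminus B(x,\eta))\cup(A_p\cap B(x,\eta))$. On $A_p\setminus B(x,\eta)$ the Green function is bounded by some $M_\eta$; H\"older between exponents $(p+1)/p$ and $p+1$, combined with (\ref{gree}) rewritten in $y$-coordinates and with $u_p(x_p^+)\to e^{1/2}$, bounds this contribution by $M_\eta$ times a quantity that is $O(\varepsilon)$, just as in the proof of Proposition~\ref{conv2}. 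On $A_p\cap B(x,\eta)$, to cope with the logarithmic singularity of $G(x,\cdot)$, I would apply H\"older with some large finite exponent $q$, relying on $\norm{G(x,\cdot)}_{L^q(B(x,\eta))}\to 0$ as $\eta\to 0$ and estimating $\norm{(u_p^+)^p}_{L^{q'}(B(x,\eta))}\leq M^p\abs{B(x,\eta)}^{1/q'}$ with $M<e^{1/2}$ a uniform pointwise upper bound for $u_p^+$ on $B(x,\eta)$; this factor then decays exponentially in $p$, defeating the polynomial prefactor $p/u_p(x_p^+)$.

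The main obstacle is precisely the uniform pointwise bound $u_p^+\leq M<e^{1/2}$ on $B(x,\eta)$, equivalent to excluding a secondary concentration of $u_p$ at $x\notin\{x^+,x^-\}$. This should follow from the sharp energy asymptotics $p\int\abs{\nabla u_p}^2\to 16\pi e$ of Remark~\ref{rem}: each rescaling bubble contributes exactly $8\pi e$ to this quantity (cf.\ Proposition~\ref{conv1}), so the ``energy budget'' forbids a third bubble, and the only two concentration points are the ones already detected at $x^\pm$. A standard Moser/Harnack-type argument then promotes this non-concentration into the needed uniform bound, closing the estimate.
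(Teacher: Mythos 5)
Your decomposition and your treatment of the two easy regions (a fixed ball $\abs{\psi}\le R$ around the concentration point, and the part of the tail staying away from $x$, where $G(x,\cdot)$ is bounded and H\"older plus the no-mass-at-infinity estimate \eqref{gree} apply) match in substance what the paper does: it splits off the set $\frac{B(x,\alpha)-x_p^+}{\varepsilon_p}$ and argues as in Proposition~\ref{conv2} on its complement. The gap is in the remaining region near the singularity of $G(x,\cdot)$. First, the threshold you state is wrong: a uniform bound $u_p^+\le M<e^{1/2}$ does not make $M^p$ decay, since $e^{1/2}>1$; with $1\le M<e^{1/2}$ your estimate $p\,M^p\norm{G(x,\cdot)}_{L^q}$ diverges rather than ``defeating the polynomial prefactor''. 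What is actually required is $\limsup_p\sup_{B(x,\eta)}u_p^+<1$, i.e.\ essentially that $u_p\to0$ uniformly away from $x^\pm$, and this is the real content of the lemma, which you defer to an unproved ``energy budget forbids a third bubble plus Moser/Harnack'' claim. Ruling out a third bubble controls rescaled quantities but does not by itself push the un-rescaled $u_p$ below $1$ on $B(x,\eta)$: recall $\norm{u_p^+}_\infty\to e^{1/2}>1$, so smallness away from the concentration points is a genuine assertion needing its own elliptic estimate.

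The paper closes exactly this point by quoting Lemma~3.5 of~\cite{rw}: since $p\int_{B(x,\alpha)}\abs{u_p^+}^p\to0$ (which, as you note, follows from \eqref{gree} because $B(x,\alpha)$ stays away from $x^+$), that lemma shows $u_p/\bigl(p\int_{B(x,\alpha)}\abs{u_p^+}^p\bigr)$ is bounded on $B(x,\alpha)$, hence $u_p<\tfrac12$ there for large $p$, and then $p\,(\tfrac12)^p\int_{B(x,\alpha)}G(x,y)\intd y=o(1)$ disposes of the singular region with no $L^q$ interpolation at all. If you replace your last paragraph by this local estimate (or prove an equivalent statement yielding $u_p\to0$ on $B(x,\eta)$), your argument becomes complete; as written, the decisive step is missing, and the bound you propose to establish would not suffice even if proved.
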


\begin{proof}
Let us make the proof for the positive case.  Let us fix $x\not =
x^+$ and consider $\alpha>0$ such that $B(x,\alpha)\subset \Omega$
and $d(x^+, B(x,\alpha))=\beta >0$. We have
\begin{equation*}
\begin{split}
\int_{\Tilde{\Omega}^+ (\varepsilon_{p})} G(x,\varepsilon_{p}\psi
+ x_{p}^+)\left\vert
  1+\frac{z_{p}}{p}\right\vert^{p}d\psi =& \int_{\Tilde{\Omega}^+
  (\varepsilon_{p})\setminus \frac{B(x,\alpha)-x_{p}^+}{\varepsilon_{p}}}
G(x,\varepsilon_{p}\psi + x_{p}^+)\left\vert
  1+\frac{z_{p}}{p}\right\vert^{p}d\psi\\
&+   \int_{\frac{B(x,\alpha)-x_{p}^+}{\varepsilon_{p}}}
G(x,\varepsilon_{p}\psi + x_{p}^+)\left\vert
  1+\frac{z_{p}}{p}\right\vert^{p}d\psi.
\end{split}
\end{equation*}
Arguing as in Proposition~\ref{conv2}, since
$G(x,\varepsilon_{p}\psi + x_{p}^+)$ converges uniformly to $G(x,
x^+)$ on each compact set of $\IR^2$, $G(x,\cdot)$ is bounded on
$\Omega \setminus B(x,\alpha)$ and
$d\left(\frac{B(x,\alpha)-x_{p}^+}{\varepsilon_{p}},0\right)\rightarrow+\infty$,
we get that the first integral converges to $8\pi G(x,x^+) $.
Concerning the second integral, since $x^+\not\in B(x,\alpha)$, we
derive that
$\int_{\frac{B(x,\alpha)-x_{p}^+}{\varepsilon_{p}}}\left\vert
  1+\frac{z_{p}}{p}\right\vert^{p}=p\int_{B(x,\alpha)}\left\vert
  u_{p}^+\right\vert^{p}\rightarrow0$. From the last statement we
deduce that we can apply Lemma 3.5 in~\cite{rw} and obtain that
$\frac{u_{p}}{p\int_{B(x,\alpha)}\left\vert
u_{p}^+\right\vert^{p}}$ is bounded in $B(x,\alpha)$ and hence
$u_{p}(x)<\frac12$ in $B(x,\alpha)$. Then
\begin{equation*}
\begin{split}
&\int_{\frac{B(x,\alpha)-x_{p}^+}{\varepsilon_{p}}}
G(x,\varepsilon_{p}\psi + x_{p}^+)\left\vert
  1+\frac{z_{p}}{p}\right\vert^{p}=p\int_{B(x,\alpha)}G(x,y)\left\vert
  u_{p}^+(y)\right\vert^{p}dy\\
  &\leq p \left(\frac12\right)^{p}\int_{B(x,\alpha)}G(x,y)=o(1),
\end{split}
\end{equation*}
which gives our claim.
\end{proof}

Let us remark that the convergence in Lemma~\ref{lemgreen} is
uniform in $x$ in $\C^0_{\text{loc}}(\Omega\setminus \{x^+\})$.

\begin{Prop}
\label{green} Under the same assumptions as in
Theorem~\ref{intro5}, the following alternatives hold:
\begin{enumerate}
\item $d(x_{p}^+,\partial\Omega)\to 0$ and
  $d(x_{p}^-,\partial\Omega)\not\to 0$. Then the function
  $pu_{p}$ converges,  up to a
  subsequence, to the negative function $-8\pi e^{1/2}G(\cdot , x^-)$
  in $\C^{1}_{\text{loc}}(\Bar{\Omega}\setminus\{x^-\})$ ;
\item $d(x_{p}^-,\partial\Omega)\to 0$ and
  $d(x_{p}^+,\partial\Omega)\not\to 0$. Then the function
  $pu_{p}$ converges,  up to a
  subsequence, to  the positive function $8\pi e^{1/2}G(\cdot, x^+)$
   in $\C^{1}_{\text{loc}}(\Bar{\Omega}\setminus\{x^+\})$ ;
\item $d(x_{p}^+,\partial\Omega)$ and
  $d(x_{p}^-,\partial\Omega)\not\to 0$. Then $p u_{p}$ converges,
  up to a subsequence,  to
  $8\pi e^{1/2}( G(\cdot, x^+)-G(\cdot, x^-))$  in
  $\C^{1}_{\text{loc}}(\Bar{\Omega}\setminus\{x^-, x^+\})$ with $x^+
  \neq x^-,\ x^+,x^- \in\Omega$   ;
\item $d(x_{p}^+,\partial\Omega) \to 0$ and
  $d(x_{p}^-,\partial\Omega)\to 0$. Then $pu_{p}\to 0$  in
  $\C^{1}_{\text{loc}}(\Bar{\Omega}\setminus\{x^-,x^+\})$.
\end{enumerate}
In the case $(3)$, the limit points $x^+$ and $x^-$ satisfy the system
\begin{equation}
\label{3}
\left\{
\begin{aligned}
\frac{\partial G}{\partial x_i}(x^+, x^-)-\frac{\partial H}{\partial
  x_i}(x^+, x^+)=0,\\
\frac{\partial G}{\partial x_i}(x^-, x^+)-\frac{\partial H}{\partial
  x_i}(x^-, x^-)=0,
\end{aligned}
\right.
\end{equation}
for $i=1,2$, where, as in the introduction, $H(x,y)$ is the
regular part of the Green function. Moreover the nodal line of
$u_{p}$  intersects the boundary $\partial \Omega$ for $p$ large.
\end{Prop}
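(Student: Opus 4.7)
The plan is to start from the Green representation
\begin{equation*}
u_p(x) = \int_\Omega G(x,y)\abs{u_p(y)}^{p-1}u_p(y)\intd y,
\end{equation*}
split the right-hand side over $\Tilde{\Omega}_p^+$ and $\Tilde{\Omega}_p^-$, and rescale via $y=\varepsilon_p\psi+x_p^\pm$. Using $p\varepsilon_p^2 u_p(x_p^+)^{p-1}=1$ together with the definitions of $z_p,z_p^-$ and the fact that $1+z_p/p$, $1+z_p^-/p$ are nonnegative in the respective rescaled nodal domains, one rewrites
\begin{equation*}
pu_p(x) = u_p(x_p^+)\Bigl[\int_{\Tilde{\Omega}^+(\varepsilon_p)} G(x,\varepsilon_p\psi+x_p^+)\Bigabs{1+\tfrac{z_p}{p}}^p\intd\psi - \int_{\Tilde{\Omega}^-(\varepsilon_p)} G(x,\varepsilon_p\psi+x_p^-)\Bigabs{1+\tfrac{z_p^-}{p}}^p\intd\psi\Bigr].
\end{equation*}
Theorem~\ref{intro5} gives $u_p(x_p^+)\to e^{1/2}$, while Lemma~\ref{lemgreen} (applied in each nodal domain, using that $G(x,\cdot)$ extends continuously by $0$ on $\partial\Omega$) shows that each bracketed integral tends to $8\pi G(x,x^\pm)$, uniformly on compact subsets of $\overline\Omega\setminus\{x^+,x^-\}$. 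Hence $pu_p(x)\to 8\pi e^{1/2}(G(x,x^+)-G(x,x^-))=:\Phi(x)$; whenever some $x^\pm$ lies on $\partial\Omega$, the corresponding term vanishes and one reads off alternatives (1), (2), (4), while alternative (3) corresponds to $x^\pm\in\Omega$. Pointwise convergence is upgraded to $\C^1_{\text{loc}}(\overline\Omega\setminus\{x^+,x^-\})$ by standard Schauder estimates applied to $-\Delta(pu_p)=p\abs{u_p}^{p-1}u_p$, whose right-hand side tends to zero uniformly on compacts away from $x^\pm$ since $u_p=O(1/p)$ there.

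To exclude $x^+=x^-$ in case (3), assume by contradiction that $x^+=x^-=x^\ast\in\Omega$, so that $\Phi\equiv 0$ on $\overline\Omega\setminus\{x^\ast\}$ and $pu_p\to 0$ in $\C^1(\partial B(x^\ast,r))$ for any fixed small $r>0$. Apply the two-dimensional Pohozaev identity
\begin{equation*}
\int_{\partial B(x^\ast,r)}\Bigl[\partial_\nu u_p\,(x-x^\ast)\cdot\nabla u_p-\tfrac{1}{2}\abs{\nabla u_p}^2 (x-x^\ast)\cdot\nu+\tfrac{\abs{u_p}^{p+1}}{p+1}(x-x^\ast)\cdot\nu\Bigr]=\tfrac{2}{p+1}\int_{B(x^\ast,r)}\abs{u_p}^{p+1}
\end{equation*}
obtained by multiplying the equation by $(x-x^\ast)\cdot\nabla u_p$ and integrating on $B(x^\ast,r)$ (in dimension two the bulk $\abs{\nabla u}^2$ term cancelling), and multiply by $p^2$. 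The $\C^1$ convergence forces each boundary integrand to be $o(1/p^2)$ uniformly, so the left-hand side is $o(1)\to 0$ for fixed $r$, while for $r$ fixed and $p$ large both bubbles sit inside $B(x^\ast,r)$ and Remark~\ref{rem} gives $p\int_{B(x^\ast,r)}\abs{u_p}^{p+1}\to 16\pi e$, so the right-hand side tends to $32\pi e\ne 0$, a contradiction. Hence $x^+\ne x^-$, both in $\Omega$. The Kirchhoff--Routh system is then derived from the coordinate Pohozaev identity
\begin{equation*}
\int_{\partial B(x^+,r)}\Bigl[\partial_\nu u_p\,\partial_{x_i} u_p-\tfrac{1}{2}\abs{\nabla u_p}^2\nu_i+\tfrac{\abs{u_p}^{p+1}}{p+1}\nu_i\Bigr]=0
\end{equation*}
on a ball $B(x^+,r)\subset\Omega\setminus\{x^-\}$: multiplying by $p^2$, the $\abs{u_p}^{p+1}$ boundary contribution is negligible and the $\C^1_{\text{loc}}$ convergence gives in the limit $\int_{\partial B(x^+,r)}[\partial_\nu\Phi\,\partial_{x_i}\Phi-\tfrac{1}{2}\abs{\nabla\Phi}^2\nu_i]=0$ for every small $r$. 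Writing $\Phi(x)=-4e^{1/2}\log\abs{x-x^+}+R^+(x)$, with $R^+(x)=8\pi e^{1/2}[H(x,x^+)-G(x,x^-)]$ smooth near $x^+$, an expansion in polar coordinates yields, as $r\to 0$, the identity $-8\pi e^{1/2}\,\partial_{x_i}R^+(x^+)=0$, which is exactly the first equation of the system; the second follows by the same argument centered at $x^-$.

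Finally, for the nodal line, $\Phi$ vanishes on $\partial\Omega$ and the identity $\int_{\partial\Omega}\partial_\nu G(\cdot,y)\intd s=-1$ for $y\in\Omega$ gives $\int_{\partial\Omega}\partial_\nu\Phi\intd s=0$. Since $\Phi$ is nontrivial ($x^+\ne x^-$), by the strong maximum principle $\partial_\nu\Phi\not\equiv 0$; having zero mean it must change sign, so there are boundary points near which $\Phi$ is respectively positive and negative just inside $\Omega$. The $\C^1$ convergence $pu_p\to\Phi$ on compact subsets of $\overline\Omega\setminus\{x^\pm\}$ transfers this sign pattern to $u_p$ for $p$ large, forcing each of the two nodal regions $\Tilde{\Omega}_p^\pm$ to reach $\partial\Omega$; since $u_p$ has exactly two nodal domains, the nodal line separating them must intersect $\partial\Omega$. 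The main delicate point of the whole proof is the simultaneous handling of the two Pohozaev identities: controlling the $\abs{u_p}^{p+1}$ boundary integral after multiplication by $p^2$, exploiting the uniform $\C^1$ convergence to extract the leading boundary contributions, and using the precise logarithmic decomposition of $\Phi$ near each concentration point to recover the exact coefficients of the Kirchhoff--Routh system.
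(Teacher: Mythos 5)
Your proof is correct and follows the same overall architecture as the paper's: Green's representation split over the two nodal domains, rescaling, Lemma~\ref{lemgreen} combined with Theorem~\ref{intro5} to identify the limit $8\pi e^{1/2}(G(\cdot,x^+)-G(\cdot,x^-))$, elliptic regularity for the $\C^1_{\text{loc}}$ upgrade, Pohozaev identities for $x^+\neq x^-$ and for the system~\eqref{3}, and the sign change of $\partial_\nu$ of the limit function for the nodal line. Three sub-arguments differ in execution, all legitimately. (i) To exclude $x^+=x^-$ you apply the Pohozaev identity on a small ball $B(x^\ast,r)$ around the common concentration point, whereas the paper applies it on all of $\Omega$ with the boundary term on $\partial\Omega$; both work (the discrepancy between your limit $32\pi e$ and the paper's $16\pi e$ is just the factor $2$ in the two-dimensional identity and is immaterial), and your local version has the small advantage of not relying on convergence up to $\partial\Omega$. (ii) For the system~\eqref{3} you carry out the $r\to0$ expansion of the boundary integral explicitly via the decomposition $\Phi=-4e^{1/2}\log\abs{x-x^+}+R^+$, where the paper simply cites the computation in~\cite{MaWei}; your constants and signs check out. (iii) For the nodal line the paper argues by contradiction through Hopf's lemma (if the nodal line avoided $\partial\Omega$, then $\partial_\nu(pu_p)$ would be one-signed on $\partial\Omega$, contradicting the sign change of $\partial_\nu\Phi$), while you argue directly that the sign of $\Phi$ just inside the boundary transfers to $u_p$. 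Your route closes, but only because the uniform $\C^1$ convergence up to $\partial\Omega$ makes the sign of $u_p$ persist on whole normal segments of length independent of $p$ — pointwise convergence at interior points would not suffice, since the collar on which $u_p$ is one-signed could shrink with $p$ — so that step deserves a sentence of justification; also, both your argument and the paper's implicitly require the two boundary points where $\partial_\nu\Phi$ takes opposite signs to lie on the same connected component of $\partial\Omega$.
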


\begin{proof}
We have
\begin{equation*}
\begin{split}
u_{p}(x) &= \int_{\Omega}G(x,y) \abs{u_{p}(y)}^{p-1} u_{p}(y)
\intd y\\
&= \int_{\Tilde{\Omega}_{p}^+}G(x,y) \abs{u_{p}(y)}^{p} \intd y -
\int_{\Tilde{\Omega}_{p}^-}G(x,y) \abs{u_{p}(y)}^{p} \intd y.
\end{split}
\end{equation*}
Let us just treat the first member of the sum. The second one can
be treated in the same way. With the change of variables $y=
\varepsilon_{p}\psi + x_{p}^+$, we get
\begin{equation*}
\begin{split}
\int_{\Tilde{\Omega}_{p}^+}G(x,y) \abs{u_{p}(y)}^{p}\intd y &=
\int_{\Tilde{\Omega}^{+}(\varepsilon_{p})} \frac{1}{p
\abs{u_{p}^+(x_{p}^+)}^{p-1}} G(x, \varepsilon_{p}\psi +
x_{p}^+) \abs{u_{p}^+(\varepsilon_{p}\psi+ x_{p}^+)}^{p} \intd \psi\\
&= \int_{\Tilde{\Omega}^{+}(\varepsilon_{p})} G(x,
\varepsilon_{p}\psi + x_{p}^+)
\frac{\left\vert\abs{u_{p}^+(\varepsilon_{p}\psi+
    x_{p}^+)}-\norm{u_{p}^+}_{\infty}+\norm{u_{p}^+}_{\infty}\right\vert^{p}}{p
  \norm{u_{p}^+}_{\infty}^{p-1}}\intd \psi\\
&=  \int_{\Tilde{\Omega}^{+}(\varepsilon_{p})} G(x,
\varepsilon_{p}\psi + x_{p}^+)
\frac{\left\vert\frac{\norm{u_{p}^+}_{\infty}
      z_{p}}{p}+\norm{u_{p}^+}_{\infty}    \right\vert^{p}}{p
  \norm{u_{p}^+}_{\infty}^{p-1}}\intd \psi\\
&=
\frac{\norm{u_{p}^+}_{\infty}}{p}\int_{\Tilde{\Omega}^{+}(\varepsilon_{p})}
G(x, \varepsilon_{p}\psi + x_{p}^+)\left\vert
  1+\frac{z_{p}}{p}\right\vert^{p} \intd\psi.
\end{split}
\end{equation*}
As $\norm{u_{p}^+}_{\infty}\to e^{1/2}$ and
$\int_{\Tilde{\Omega}^{+}(\varepsilon_{p})} G(x,
\varepsilon_{p}\psi + x_{p}^+)\left\vert
  1+\frac{z_{p}}{p}\right\vert^{p} \intd\psi$ converges to
$8\pi G(x,x^+)$ (see Lemma~\ref{lemgreen}),
by working in the same way with the second part of the sum, we have
\begin{equation}
\label{1} pu_{p} \to 8\pi e^{1/2} (G(., x^+)-G(.,x^-))
\end{equation}
in $\C^{0}_{\text{loc}}(\Omega\setminus \{x^+, x^-\})$,  up to a
subsequence. By regularity, it implies the convergence in
$\C^1_{\text{loc}}(\Omega\setminus \{x^+\})$ (see~\cite{han}).

Observing that $G(.,x^+)=0$ when $x^+\in \partial\Omega$, we get
the alternatives. In the third case, we prove that $x^+\neq x^-$
as follows. Indeed, arguing by contradiction, we have that
$x^+=x^-$. Then, $p u_{p}\to 0$ in $\C^1(\Bar{\omega})$ where
$\omega$ is a neighborhood of the boundary $\partial\Omega$. By
the Pohozaev identity, multiplying by $p^2$, we get
\begin{equation*}
\frac{p^2}{p+1} \int_{\Omega}\abs{u_{p}}^{p+1} = \frac{1}{4}
\int_{\partial \Omega} (x\cdot \nu) (\partial_\nu(pu_{p}))^2.
\end{equation*}
As the left-hand side converges to $16\pi e$ (see
Remark~\ref{rem}) and the right-hand side converges to $0$ (since
$p u_{p}\to 0$ in $\C^1(\Bar{\omega})$), we get a contradiction.

Now, we prove that $x^+$ and $x^-$ solve the system \eqref{3}.  Concerning
the location of $x^+$ and $x^-$, we use a Pohozaev-type
identity. For $i=1,2$ let us multiply the equation ~\eqref{pblP} by
$\frac{\partial u_p}{\partial x_i}$ and
integrate on $B_R(x^+)\subset\Omega$, the ball centered at $x^+$ and
radius $R$. We have that,
\begin{eqnarray}
\label{robin1} &&0=\frac2{p+1}\int_{\partial
  B_R(x^+)}|u_{p}|^{p+1}\nu_i+\int_{\partial
  B_R(x^+)}\frac{\partial u_{p}}{\partial x_i}
\frac{\partial u_{p}}{\partial\nu}-\frac12\int_{\partial
  B_R(x^+)}|\nabla u_{p}|^2\nu_i=\nonumber\\
&&I_1+I_2+I_3
\end{eqnarray}
where $\nu_i$ are the components of the normal direction.\\
From ~\eqref{1} we get that
\begin{equation}
\label{2} p^2I_1=O\left(\frac12\right)^{p}\quad\hbox{as
}p\rightarrow +\infty.
\end{equation}
Multiplying~\eqref{robin1} by $p^2$ and using~\eqref{1} and
~\eqref{2} we deduce
\begin{equation}
\label{21} \int_{\partial B_R(x^+)}\frac{\partial (G(.,
x^+)-G(.,x^-))}{\partial x_i} \frac{\partial (G(.,
x^+)-G(.,x^-))}{\partial\nu}- \frac12\int_{\partial
B_R(x^+)}|\nabla (G(., x^+)-G(.,x^-))|^2\nu_i=0.
\end{equation}
The last integral was computed in ~\cite{MaWei}, page 511-512 which gives
\begin{equation}
\nabla\left(G(x^+,x^-)-H(x^+,x^+)\right)=0
\end{equation}
Repeating the same procedure in $B_R(x^-)$ we derive that
\begin{equation}
\nabla\left(G(x^-,x^+)-H(x^-,x^-)\right)=0
\end{equation}
which gives the claim.

To conclude the proof, we show that the nodal line of $u_{p}$
intersects the boundary $\partial\Omega$ for $p$ large. If not,
$u_{p}$ is a one-signed function in a neighborhood  of
$\partial\Omega$, which, by Höpf's lemma, implies that
$\partial_\nu pu_{p}$ is one-signed on $\partial\Omega$ for large
$p$. On the other hand, as $x^+ \neq x^-$ and
$\int_{\partial\Omega} \partial_\nu (G(\cdot, x^+)-G(\cdot,
x^-))=0$, the normal derivative of the limit function changes its
sign along $\partial\Omega$. It contradicts the $\C^1$-convergence
of $pu_{p}$ to $8\pi \sqrt{e} (G(\cdot, x^+)-G(\cdot, x^-))$ in a
compact neighborhood of $\partial\Omega$.
\end{proof}

\textit{Proof of Theorem~\ref{introthm}} : We need to prove that the
cases $(1)$, $(2)$ and $(4)$ in Proposition~\ref{green} cannot happen. To
start with, we focus on the case $(4)$. Arguing by contradiction, let us
assume that $x^+$ and $x^-$
  belong to $\partial\Omega$. Let $D\subset \Omega$ be an open domain
  which is the intersection between a neighborhood of  $x^+$ and
  $\Omega$. We assume w.l.o.g.\ that $x^-\notin \Bar{D}$  when $x^+
   \neq x^-$ and $x^-\notin \partial\Bar{D}$ when $x^+ = x^-$. We have that
  $pu_{p}\to 0$ in
  $\C^1_\text{loc}(\Bar{D}\setminus\{x^+\})$. Using the same
  notations as in the proof of Proposition~\ref{away...} (for $Q$, $Q^+$,
  $S$,...), we consider the change of
variables $\phi : D \to Q^+$ and $\phi(D\cap\partial\Omega )=S $.
Moreover $\phi^{-1} \in \C^1$. Then, we define $D^*:=
\phi^{-1}(Q)$ and $u_{p}^*$ which is $u_{p}$ on $D$ and the odd
tubular reflection on $D^*\setminus D$ (as in the proof of
Proposition~\ref{away...}). We get that $u_{p}^*$ solves $-\Delta
u = \abs{u}^{p-1}u$ on $D^*$ and $p u_{p}^*\to 0$ in
$\C^1(\Bar{\omega}^*)$ where $\omega^*\subset D^*$ is a
neighborhood of the boundary $\partial D^*$ avoiding $x^+$. Using
the Pohozaev identity and  multiplying by $p^2$, we get the
existence of constants $K,K^*$ and $K^{**}$ such that
\begin{equation}
\label{poho}
\begin{aligned}
\frac{p^2}{p+1} \int_{D^*}\abs{u_{p}^*}^{p+1} = K \int_{\partial
D^*} (x\cdot \nu) (\partial_\nu(pu_{p}^*))^2\intd\tau &+
K^*\int_{\partial D^*} (x\cdot \nu) (\partial_{\tau}(pu_{p}^*))^2
\intd\tau +\\
K^{**}\frac{p^2}{p+1}  \int_{\partial D^*} \abs{u_{p}^*}^{p+1}.
\end{aligned}
\end{equation}
As $p u_{p}^*\to 0$ in $\C^1(\Bar{D^*}\setminus\{x^+\})$, the
right-hand side is converging to zero. To get a contradiction, we
prove that the left-hand side is not converging to zero. For this,
we claim that $p\int_{D^*} \abs{u_{p}^*}^{p+1}$ converges to
$C\geq 8\pi e$.  If not, as $p\int_{\Omega} \abs{u_{p}^-}^{p+1}\to
8\pi e $ and $p\int_{\Omega} \abs{u_{p}}^{p+1}\to 16\pi e $, we
get the existence of a positive constant $\psi$ such that
\begin{equation*}
\int_{\Omega\setminus (D^*\cup B(x^-,\delta))} p
\abs{u_{p}}\abs{u_{p}}^{p}  >\psi
\end{equation*}
for any $\delta >0$ and large $p$. It contradicts $pu_{p}\to 0$ in
$\C^1(\Bar{\Omega}\setminus\{x^+\})$.

To finish, let us prove that the case $(1)$ cannot happen (the
case $(2)$ is similar). Working in the same way, we  construct an
open domain $x^+\in D^*$ such that $u_{p}^*$ solves $-\Delta u =
\abs{u}^{p-1}u$ on $D^*$ and $p u_{p}^*\to G(\cdot,x^-)$ in
$\C^1(\Bar{\omega}^*)$ where $\omega^*$ is any compact set in
$\Bar{D^*}\setminus\{x^+\}$. Using again the Pohozaev identity and
multiplying by $p^2$, we get equation~\eqref{poho}. Working as
previously, as $p u_{p}\to G(.,x^-)$ and $u_{p}\to 0$ in
$\C^1_{\text{loc}}(\Omega\setminus\{x^+\})$, the left-hand side
converges to $C\geq 8\pi e$. Concerning the right-hand side, as
$G(.,x^-)\in \C^1(\Bar{\Omega})$ and $G(\cdot,x^-)  =0$ on
$\partial\Omega$,  we can consider $D^*$ small enough such that
the two last terms converge to constants less than $8\pi e /3$.
For the first one, as there exists a constant $K>0$ such that
$\abs{\nabla
  G(x,y)}\leq \frac{K}{\abs{x-y}}$, we get
that   $(\partial_\nu G(\cdot,x^-))^2$ is bounded in a neighborhood of
$x^+$. Taking $D^*$ small enough, we also get that the first term
converges to a constant less than $8\pi e/3$ which is
a contradiction. \begin{flushright}$\square$\end{flushright}

\bibliographystyle{plain}
\bibliography{paperGGP}

\begin{thebibliography}{10}

\bibitem{grossi}
Adimurthi and Massimo Grossi.
\newblock Asymptotic estimates for a two-dimensional problem with polynomial
  nonlinearity.
\newblock {\em Proc. Amer. Math. Soc.}, 132(4):1013--1019 (electronic), 2004.

\bibitem{aftalion}
Amandine Aftalion and Filomena Pacella.
\newblock Qualitative properties of nodal solutions of semilinear elliptic
  equations in radially symmetric domains.
\newblock {\em C. R. Math. Acad. Sci. Paris}, 339(5):339--344, 2004.

\bibitem{bartweth}
Thomas Bartsch and Tobias Weth.
\newblock A note on additional properties of sign changing solutions to
  superlinear elliptic equations.
\newblock {\em Topol. Methods Nonlinear Anal.}, 22(1):1--14, 2003.

\bibitem{bep3}
Mohamed Ben~Ayed, Khalil El~Mehdi, and Filomena Pacella.
\newblock Blow-up and nonexistence of sign changing solutions to the
  {B}rezis-{N}irenberg problem in dimension three.
\newblock {\em Ann. Inst. H. Poincar\'e Anal. Non Lin\'eaire}, 23(4):567--589,
  2006.

\bibitem{bep1}
Mohamed Ben~Ayed, Khalil El~Mehdi, and Filomena Pacella.
\newblock Blow-up and nonexistence of sign changing solutions to the
  {B}rezis-{N}irenberg problem in dimension three.
\newblock {\em Ann. Inst. H. Poincar\'e Anal. Non Lin\'eaire}, 23(4):567--589,
  2006.

\bibitem{bep2}
Mohamed Ben~Ayed, Khalil El~Mehdi, and Filomena Pacella.
\newblock Classification of low energy sign-changing solutions of an almost
  critical problem.
\newblock {\em J. Funct. Anal.}, 250(2):347--373, 2007.

\bibitem{bbgv}
Denis Bonheure, Vincent Bouchez, Christopher Grumiau, and Jean Van~Schaftingen.
\newblock Asymptotics and symmetries of least energy nodal solutions of
  {L}ane-{E}mden problems with slow growth.
\newblock {\em Commun. Contemp. Math.}, 10(4):609--631, 2008.

\bibitem{brezis}
Ha{\"{\i}}m Brezis.
\newblock {\em Analyse fonctionnelle}.
\newblock Collection Math\'ematiques Appliqu\'ees pour la Ma\^\i trise.
  [Collection of Applied Mathematics for the Master's Degree]. Masson, Paris,
  1983.
\newblock Th\'eorie et applications. [Theory and applications].

\bibitem{ccn}
Alfonso Castro, Jorge Cossio, and John~M. Neuberger.
\newblock A sign-changing solution for a superlinear {D}irichlet problem.
\newblock {\em Rocky Mountain J. Math.}, 27(4):1041--1053, 1997.

\bibitem{elgrossi}
Khalil El~Mehdi and Massimo Grossi.
\newblock Asymptotic estimates and qualitative properties of an elliptic
  problem in dimension two.
\newblock {\em Adv. Nonlinear Stud.}, 4(1):15--36, 2004.

\bibitem{espo2}
Pierpaolo Esposito, Monica Musso, and Angela Pistoia.
\newblock Concentrating solutions for a planar elliptic problem involving
  nonlinearities with large exponent.
\newblock {\em J. Differential Equations}, 227(1):29--68, 2006.

\bibitem{espo1}
Pierpaolo Esposito, Monica Musso, and Angela Pistoia.
\newblock On the existence and profile of nodal solutions for a two-dimensional
  elliptic problem with large exponent in nonlinearity.
\newblock {\em Proc. Lond. Math. Soc. (3)}, 94(2):497--519, 2007.

\bibitem{gnn}
Basilis Gidas, Wei~Ming Ni, and Louis Nirenberg.
\newblock Symmetry and related properties via the maximum principle.
\newblock {\em Comm. Math. Phys.}, 68(3):209--243, 1979.

\bibitem{gt}
Christopher Grumiau and Christophe Troestler.
\newblock Oddness of least energy nodal solutions on radial domains.
\newblock {\em Elec. Journal of diff. equations}, Conference 18:23--31, 2010.

\bibitem{han}
Zheng-Chao Han.
\newblock Asymptotic approach to singular solutions for nonlinear elliptic
  equations involving critical {S}obolev exponent.
\newblock {\em Ann. Inst. H. Poincar\'e Anal. Non Lin\'eaire}, 8(2):159--174,
  1991.

\bibitem{MaWei}
Li~Ma and Juncheng~C. Wei.
\newblock Convergence for a {L}iouville equation.
\newblock {\em Comment. Math. Helv.}, 76(3):506--514, 2001.

\bibitem{melas}
Antonios~D. Melas.
\newblock On the nodal line of the second eigenfunction of the {L}aplacian in
  {${\bf R}^2$}.
\newblock {\em J. Differential Geom.}, 35(1):255--263, 1992.

\bibitem{pw}
Filomena Pacella and Tobias Weth.
\newblock Symmetry of solutions to semilinear elliptic equations via {M}orse
  index.
\newblock {\em Proc. Amer. Math. Soc.}, 135(6):1753--1762 (electronic), 2007.

\bibitem{rw}
Xiaofeng Ren and Juncheng Wai.
\newblock {S}ingle-point condensation and least-energy solutions.
\newblock {\em Proceedings of the AMS}, 124(1):111--120, 1996.

\bibitem{ren}
Xiaofeng Ren and Juncheng Wei.
\newblock On a two-dimensional elliptic problem with large exponent in
  nonlinearity.
\newblock {\em Trans. Amer. Math. Soc.}, 343(2):749--763, 1994.

\end{thebibliography}

\end{document}